\documentclass[11pt,a4paper,reqno]{amsart}
\usepackage{latexsym}
\usepackage{amsmath}
\usepackage{amssymb}
\usepackage{bbm}
\usepackage{amsthm}
\usepackage{fullpage}

\usepackage{tikz}

\usepackage{fancyhdr}
\usepackage{latexsym}
\usepackage{dsfont}
\usepackage[dvips]{epsfig}
\usepackage[latin1]{inputenc}
\usepackage{appendix}
\usepackage{comment}
\usepackage{color,url}

\usepackage{graphicx}
\usepackage{epsfig}

\setlength{\parindent}{0pt}
\setlength{\parskip}{11pt}

\usepackage[sc]{mathpazo}
\linespread{1.05}         
\usepackage[T1]{fontenc}




\newcommand{\bea}{\begin{eqnarray*}}
\newcommand{\eea}{\end{eqnarray*}}
\newcommand{\be}{\begin{eqnarray}}
\newcommand{\ee}{\end{eqnarray}}
\newcommand{\beq}{\begin{equation}}
\newcommand{\eeq}{\end{equation}}
\newcommand\numberthis{\addtocounter{equation}{1}\tag{\theequation}}


\newtheorem{thm}{Theorem}

\newtheorem{lem}[thm]{Lemma}

\newtheorem{prop}[thm]{Proposition}

\theoremstyle{nonumberplain}

\newcommand{\E}[1]{\ensuremath{\mathbb{E} \left[#1 \right]}}
\newcommand{\Prob}[1]{\ensuremath{\mathbb{P} \left(#1 \right)}}

\newcommand{\var}[1]{\ensuremath{\mathrm{var} \left(#1 \right)}}

\newcommand{\I}[1]{\ensuremath{\mathbbm{1}_{ \{ #1 \} }}}
\newcommand{\R}{\ensuremath{\mathbb{R}}}
\newcommand{\Z}{\ensuremath{\mathbb{Z}}}
\newcommand{\N}{\ensuremath{\mathbb{N}}}

\newcommand{\fl}[1]{\ensuremath{\lfloor #1 \rfloor}}
\newcommand{\ce}[1]{\ensuremath{\lceil #1 \rceil}}

\renewcommand{\subset}{\subseteq}
\newcommand{\convdist}{\ensuremath{\stackrel{d}{\longrightarrow}}}
\newcommand{\convprob}{\ensuremath{\stackrel{p}{\rightarrow}}}
\newcommand{\equidist}{\ensuremath{\stackrel{d}{=}}}

\title{The spread of fire on a random multigraph}
\author{Christina Goldschmidt}
\address{Department of Statistics, University of Oxford, 24-29 St Giles', Oxford OX1 3LB, UK}
\email[Christina Goldschmidt]{goldschm@stats.ox.ac.uk}
\author{Eleonora Krea\v{c}i\'{c}}
\email[Eleonora Krea\v{c}i\'{c}]{eleonora.kreacic@stcatz.ox.ac.uk}
\begin{document}

\date{\today}
\keywords{Random multigraph, Karp-Sipser algorithm, differential equations method, reflected stochastic differential equation}
\subjclass{60C05 (Primary); 60F17, 05C80, 05C70, 05C85 (Secondary)}
\maketitle

%
%
\begin{abstract}
We study a model for the destruction of a random network by fire.  Suppose that we are given a multigraph of minimum degree at least 2 having real-valued edge-lengths.  We pick a uniform point from along the length and set it alight; the edges of the multigraph burn at speed 1.  If the fire reaches a vertex of degree 2, the fire gets directly passed on to the neighbouring edge; a vertex of degree at least 3, however, passes the fire either to all of its neighbours or none, each with probability $1/2$.  If the fire goes out before the whole network is burnt, we again set fire to a uniform point.  We are interested in the number of fires which must be set in order to burn the whole network, and the number of points which are burnt from two different directions. We analyse these quantities for a random multigraph having $n$ vertices of degree 3 and $\alpha(n)$ vertices of degree 4, where $\alpha(n)/n \to 0$ as $n \to \infty$, with i.i.d.\ standard exponential edge-lengths. Depending on whether $\alpha(n) \gg \sqrt{n}$ or $\alpha(n)=O(\sqrt{n})$, we prove that as $n \to \infty$ these quantities converge jointly in distribution when suitably rescaled to either a pair of constants or to (complicated) functionals of Brownian motion.

We use our analysis of this model to make progress towards a conjecture of Aronson, Frieze and Pittel concerning the number of vertices which remain unmatched when we use the Karp-Sipser algorithm to find a matching on the Erd\H{o}s-R\'enyi random graph.
\end{abstract}

%
%
\section{Introduction}

\subsection{The model} \label{Sect Model}

Suppose that we are given a finite connected multigraph with strictly positive real-valued edge-lengths.  We introduce a model for the destruction of such a network by fire.  The edges are flammable.  First, a point is picked uniformly (i.e.\ according to the normalised Lebesgue measure) and set alight.  (With probability 1, this point will lie in the interior of an edge.)  The fire passes at speed 1 along the edge (in both directions) until it reaches a node.  A node of degree $d \ge 3$ will pass the fire onto all of its other neighbouring edges with probability $1/2$ or stop the fire with probability $1/2$.  If it stops the fire, it becomes a vertex of degree $d-1$.  A vertex of degree $2$ necessarily passes a fire arriving along from one of its neighbouring edges onto the other one.  The fire spreads until it either goes out or has burnt the whole network.  If it goes out before the whole network is burnt, a new uniform point is picked and set alight, and the process continues as before.

We are interested in two aspects of this process:
\begin{enumerate}
\item How many new fires must be set in order to burn the whole network?
\item How many times does it happen that a point is burnt from two different directions?
\end{enumerate}
We refer to the second phenomenon as a \emph{clash}.

Let $\alpha: \N \to \N$ be a function such that $\alpha(n)/n \to 0$ as $n \to \infty$.  We will study these questions in the setting where the base network is a random multigraph with $n$ vertices of degree $3$ and $\alpha(n)$ vertices of degree 4, sampled according to the configuration model (see below for a description).  Throughout this paper, we will implicitly assume $n$ to be even.   We take the edge-lengths to be independent and identically distributed standard exponential random variables.

Let $F^n$ be the number of fires we must set in order to burn the whole network. 
Let $C^n$ be the number of clashes.  We will study the limiting behaviour of these quantities as $n \to \infty$.  It turns out that both scale as $\sqrt{n}$ as long as $\alpha(n) = O(\sqrt{n})$ and as $\alpha(n)$ if $\alpha(n) \gg \sqrt{n}$.  In order to state our results more precisely, we introduce an auxiliary stochastic process.  

Let $(B_t)_{t \ge 0}$ be a standard Brownian motion and for $a\ge0$ let $(X^a_t, L_t^a)_{0 \le t \le 1}$ be the unique solution to the stochastic differential equation with reflection determined by
\begin{equation}
dX_t^{a} = -\frac{2}{3} \frac{X_t^a}{(1-t)}dt + \frac{2a}{3}(1-t)^{1/3} dt + dB_t + dL^a_t, \quad 0 \le t < 1, \label{eqn:Xdefn}  
\end{equation}
where $(L_t^a)_{0 \le t < 1}$ is the local time process of $X^a$ at level 0.  This solution may be explicitly written as a functional of the Brownian motion as follows.  First define
\[
K_t^a = -\inf_{0 \le s \le t} \left[ a - a(1-s)^{2/3} + \int_0^s \frac{dB_u}{(1-u)^{2/3}} \right], \quad 0 \le t < 1.
\]
Then set
\begin{align*}
L_t^a & = \int_0^t (1-s)^{2/3} dK_s^a, \quad 0 \le t < 1, \\
X_t^a & = a(1-t)^{2/3} - a(1-t)^{4/3} + (1 - t)^{2/3}  \int_0^t \frac{d B_u}{(1-u)^{2/3}} + (1-t)^{2/3} K_t^a,  \quad 0 \le t < 1.
\end{align*}
We show that both of these quantities have finite almost sure limits as $t \to 1$, which we call $L_1^a$ and $X_1^a$.

\begin{thm} \label{thm:main}
\begin{itemize}
\item[(i)] Suppose that $\alpha(n)/\sqrt{n} \to a$ as $n \to \infty$, where $a \ge 0$.  Then, as $n \to \infty$,
\[
\frac{1}{\sqrt{n}} (F^n, C^n) \convdist \left(\frac{1}{2}L^a_1, \int_0^1 \frac{X^a_s}{3(1-s)} ds \right).
\]
where the limiting random variables are almost surely finite.
\item[(ii)] Suppose that $\alpha(n) \gg\sqrt{n}$.  Then, as $n \to \infty$,
\[
\frac{1}{\alpha(n)} F^n \convprob 0, \quad \frac{1}{\alpha(n)} C^n \convprob \frac{1}{4}.
\]
\end{itemize}
\end{thm}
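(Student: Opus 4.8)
The plan is to realise the fire process as a Markovian exploration of the configuration model in which the half-edge pairings --- and, by the memorylessness of the exponential, the residual edge-lengths --- are revealed only as the fire reaches them. The state I would carry is the number of \emph{active fronts} $Z^n$ (edges currently being traversed), the numbers of not-yet-burnt half-edges at vertices of current degree $3$ and current degree $4$, and the running totals $F^n$ and $C^n$. The chain moves through three kinds of elementary event: \textbf{a new fire} (when $Z^n=0$): a uniform point is lit in a still-unrevealed edge, producing exactly two fronts with independent $\Exp(1)$ residual lengths and increasing $F^n$ by $1$; \textbf{a hit}: a front reaches its destination vertex $v$, which passes the fire with probability $1/2$ (forced if its current degree is $2$) and then activates each of its remaining half-edges, matching it to a uniformly chosen still-unmatched half-edge and creating a front; \textbf{a clash completion}: two fronts travelling towards each other along one edge meet, removing two fronts and increasing $C^n$ by $1$. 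Three elementary observations drive the analysis: (a) at a not-yet-burnt degree-$3$ vertex a hit changes $Z^n$ by $+1$ or $-1$ with equal probability, so the exploration is \emph{critical}, whereas degree-$4$ vertices produce a net positive drift; (b) every vertex passes the fire exactly once, and the number of its half-edges it activates is a function only of that vertex's own i.i.d.\ pass/no-pass coins, with mean $3/2$ at a degree-$3$ vertex and $9/4$ at a degree-$4$ vertex; this gives the \emph{pathwise identity} $C^n = (\text{total activations}) - E^n + F^n$, where $E^n=(3n+4\alpha(n))/2$ is the number of edges, and hence the exact relation $\E{C^n} = \tfrac14\alpha(n) + \E{F^n}$; (c) when a vertex $v$ passes, it contributes to $C^n$ exactly the number of its other half-edges that are \emph{already matched} at that instant, which is of order $Z^n$ divided by the number of remaining half-edges.

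For part (i) I would run the differential equations method. Parametrise the exploration by a suitable time $t$, essentially the proportion of the $\approx 3n$ half-edges consumed (after a deterministic time-change, if needed, to tidy the constants); then the numbers of remaining half-edges and remaining degree-$4$ vertices concentrate around deterministic fluid curves, the former $\approx 3n(1-t)$. The heart of the matter is to show that $Z^n/\sqrt n$, as a process in $t$, converges in distribution on $[0,1)$ to $X^a$. A martingale functional central limit theorem handles the compensated increments of $Z^n$, whose predictable quadratic variation converges to $\int_0^{\cdot}ds$ (giving the $dB_t$); the predictable drift splits into a mean-reverting term $-\tfrac23 X_t^a/(1-t)$, arising from the clash-completion moves (each removing two fronts, occurring at rate of order $Z^n/(1-t)$), and a positive term $\tfrac{2a}{3}(1-t)^{1/3}$ contributed by the $\alpha(n)\approx a\sqrt n$ degree-$4$ vertices as they deplete; and the re-lighting rule at $Z^n=0$ passes in the limit to Skorokhod reflection at the origin, with the cumulative upward push $2F^n$ converging, rescaled by $\sqrt n$, to $L^a_1$. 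The stability estimates available from the explicit representation of the solution of (\ref{eqn:Xdefn}) then let one push the approximation through to $t=1$ and identify the a.s.\ finite limits $X^a_1$ and $L^a_1$.

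Granted this, part (i) follows by reading off the statistics: $2F^n/\sqrt n \to L^a_1$ gives $F^n/\sqrt n \convdist \tfrac12 L^a_1$, while summing the per-pass contributions of observation (c) against $Z^n\approx \sqrt n X^a_t$ and $\approx 3n(1-t)$ remaining half-edges gives $C^n/\sqrt n \convdist \int_0^1 X_s^a/(3(1-s))\,ds$ (the exact relation $\E{C^n}=\tfrac14\alpha(n)+\E{F^n}$ serving as a consistency check), with joint convergence automatic since both limits are functionals of the single driving Brownian motion. For part (ii), when $\alpha(n)\gg\sqrt n$ the degree-$4$ drift swamps the diffusive fluctuations, so with high probability $Z^n$ stays bounded away from $0$ until $t$ is very close to $1$; hence $F^n = o_{\bP}(\alpha(n))$, i.e.\ $F^n/\alpha(n)\convprob 0$. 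The total number of activations is a sum of $n+\alpha(n)$ independent bounded variables, so it equals $\tfrac32 n + \tfrac94\alpha(n) + O_{\bP}(\sqrt n) = \tfrac32 n + \tfrac94\alpha(n) + o_{\bP}(\alpha(n))$; substituting this and $F^n=o_{\bP}(\alpha(n))$ into the identity of observation (b) yields $C^n = \tfrac14\alpha(n) + o_{\bP}(\alpha(n))$, that is $C^n/\alpha(n)\convprob\tfrac14$.

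The principal obstacle is uniform control of the diffusion approximation all the way up to the absorbing time $t=1$: the $1/(1-t)$ singularity in the drift, the depletion of the half-edge reservoirs, and the consumption of the last $o(n)$ vertices in a regime with no fluid limit all demand delicate a priori bounds --- that $Z^n$ never becomes atypically large, that the remaining graph never develops an unexpectedly large structure, and that the terminal phase contributes $o(\sqrt n)$ to $F^n$ and $C^n$ (respectively $o(\alpha(n))$ in part (ii)). A secondary difficulty is making the clash bookkeeping rigorous: $C^n$ is built from many weakly dependent small indicators whose conditional means depend on the whole current state, so it needs a Doob decomposition with a second-moment estimate showing its martingale part is negligible on the relevant scale, together with a careful verification that the discrete re-lighting mechanism indeed converges to reflection without double-counting excursions.
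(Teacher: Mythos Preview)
Your plan is sound and, for part~(i), essentially coincides with the paper's: set up the Markovian exploration with state $(U^n,V^n,X^n,N^n)$ (your $Z^n$ is the paper's $X^n$), establish fluid limits for the auxiliary degree counts, prove the reflected-diffusion limit for $X^n/\sqrt n$ via a martingale invariance principle with re-lighting passing to Skorokhod reflection, and read off $F^n$ and $C^n$ as functionals of the limit. You have correctly identified both substantive difficulties --- pushing the approximation through the $1/(1-t)$ singularity to $t=1$, and controlling the martingale part of $C^n$ --- and these are precisely what occupy the paper's Section~6 and Lemma~19.

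Where you diverge from the paper is in your treatment of $C^n$ in part~(ii). Your pathwise identity $C^n = A - E^n + F^n$, with $A=\sum_v A_v$ a sum of independent bounded variables determined solely by each vertex's pass/no-pass coins, is correct (it follows from $A_v+I_v=\deg v$ together with the front-creation/front-destruction balance $A+2F^n=I+2C^n$) and the paper does not use it. Granted $F^n=o_p(\alpha(n))$, it yields $C^n/\alpha(n)\to 1/4$ in one line via the CLT for $A$, whereas the paper instead proves the full process-level fluid limit $N^n(\lfloor n\cdot\rfloor)/\alpha(n)\to m(\cdot)$ (Theorem~14). Your route is shorter and more conceptual; the paper's gives a stronger conclusion. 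As a bonus, your identity also explains the exact distributional relation $N_1^a-\tfrac12 L_1^a=\tfrac{a}{4}+\tfrac12 B_1$, visible from integrating the SDE.

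Both approaches, however, share the same hard core: proving $F^n/\alpha(n)\to 0$. Your one-sentence justification (``the degree-4 drift swamps the fluctuations, so $Z^n$ stays away from~0'') glosses over real work. At the very start $Z^n=0$ and the drift has not yet accumulated, so one must show the initial local time is only $O_p(\sqrt n)$; the paper does this by coupling with a random walk of drift $\Theta(\alpha(n)/n)$ and computing a hitting probability (Lemma~12, Proposition~13). Near $t=1$ one needs separate a~priori bounds (the paper's Section~6, via a coupling with reflected SSRW and a sequence of stopping times). These arguments are not especially deep but they are not automatic either, and your proposal would need to supply them.
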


\subsection{Motivation: the Karp-Sipser algorithm on a random graph} \label{Sect KS and fire prop-motivation}

The Karp-Sipser algorithm, introduced in \cite{KarpSipserER}, is a greedy algorithm for finding a matching in a fixed graph. The algorithm works as follows. Now call a vertex of degree 1 a \emph{pendant} vertex.  If there is at least one pendant vertex in the graph, choose one uniformly at random and include the edge incident to it in the matching. Remove this edge, the two vertices that form it and any other edges incident to them. If, on the other hand, there are no pendant vertices in the graph, choose one of the existing edges uniformly at random, and include it in the matching. Remove the chosen edge together with the two vertices that form it, as well as any other edges incident to those vertices. Now repeat the procedure on the resulting graph, until there are no edges remaining.

A key observation is that whenever there exists a pendant vertex in a graph, that vertex and its neighbour are included in \emph{some} maximum matching.  So the Karp-Sipser algorithm never makes a ``mistake'' in including such an edge in its matching.  On the other hand, in the other type of move (picking a uniform edge and including it in the matching) it is possible that it includes an edge which would \emph{not} be in any maximum matching.

The Karp-Sipser algorithm turns out to be very successful at finding a near-maximum matching in certain classes of (sparse) random graphs~\cite{KarpSipserER,KarpSipserRevisited, KarpSipserFixedDegree}.  Suppose that we take the graph to be the Erd\H{o}s--R\'enyi random graph $G(N,M)$, with $N$ vertices and $M$ edges, where $M = \fl{cN/2}$ and $c > 0$ is a constant.  Let $D_N$ be the difference between the size of a maximum matching on $G(N,M)$ and the matching produced by the Karp-Sipser algorithm.  Let $A_N$ be the number of vertices remaining in the graph at the point of the first uniform random choice which are left unmatched by the Karp-Sipser algorithm.  Then $D_N \le A_N/2$.  Aronson, Frieze and Pittel~\cite{KarpSipserRevisited} proved the following theorem.  

\begin{thm}
\begin{enumerate}
\item[(i)] If $c < e$, then $\Prob{D_N = 0} \to 1$ as $N \to \infty$.
\item[(ii)] If $c > e$, there exist constants $C_1, C_2 > 0$ such that
\[
C_1 N^{1/5}/(\log N)^{75/2} \le \E{A_N} \le C_2 N^{1/5} (\log N)^{12}.
\]
\end{enumerate}
\end{thm}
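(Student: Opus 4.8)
This is the theorem of Aronson, Frieze and Pittel \cite{KarpSipserRevisited}, and I would attack it by analysing the Karp--Sipser algorithm in its two natural phases. In \emph{Phase 1} the algorithm repeatedly deletes a uniformly chosen pendant vertex together with its matched neighbour and all edges at that neighbour, halting with no pendant vertex left; the residual graph $\hat G_N$, the \emph{Karp--Sipser core}, has minimum degree at least $2$. Every Phase-1 move is ``safe'' (a pendant edge lies in some maximum matching), so both $D_N$ and the quantity counted by $A_N$ are determined entirely by \emph{Phase 2}, in which the algorithm must include uniformly random edges and may blunder. The structural fact underpinning everything is that, conditionally on its degree sequence, the residual multigraph at every step is uniform; hence the degree sequence evolves asymptotically as a Markov chain whose rescaled trajectory is governed by a system of ordinary differential equations, to be controlled by the differential equations method.

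\emph{Part (i).} For $G(N,\fl{cN/2})$ the degrees are asymptotically Poisson$(c)$. I would run the differential equations method on Phase 1, tracking the rescaled counts of degree-$j$ vertices, $j\ge 1$, after $tN$ pendant-removal rounds, and solve the limiting ODEs, whose solution is expressible through the probability generating function of the residual degree law. For $c<e$ this shows that the graph surviving Phase 1 is whp structurally trivial --- for $c<1$ a disjoint union of cycles, and in general a ``subcritical'' object --- the constant $e$ being precisely the location of the phase transition in the limiting leaf-removal dynamics. Since Karp--Sipser is optimal on such structures (in particular on disjoint unions of paths and cycles), $\Prob{D_N=0}\to 1$ follows, a routine concentration estimate (Phase 1 has $\Theta(N)$ steps with one-step changes of controlled size) justifying the passage from the ODEs back to the graph.

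\emph{Part (ii).} For $c>e$ the same analysis gives that $\hat G_N$ whp has $\Theta(N)$ vertices and edges, with minimum degree $\ge 2$. I would then analyse Phase 2 by tracking the degree sequence, the decisive statistic being the number of degree-$2$ vertices: deleting a random edge and its endpoints demotes certain degree-$2$ vertices to pendants and triggers a cascade of Phase-1 moves, and a vertex is left unmatched exactly when it loses its last remaining edge without being a matched endpoint. The mean one-step change again yields ODEs valid while the graph is macroscopic, and these predict that only $o(N)$ unmatched vertices are produced --- deterministically the graph is driven towards a union of cycles and paths, on which the algorithm is optimal --- so $A_N$ is an entirely second-order, fluctuation-driven quantity. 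Near the end of Phase 2 the ODE approaches a degenerate fixed point, the relevant order parameter decaying polynomially while the degree counts fluctuate on the $\Theta(\sqrt N)$ scale; the exponent $1/5$ emerges from balancing this decay against the accumulated martingale fluctuations, together with the fact that a near-critical cascade contributes only of order its own fluctuation in unmatched vertices. For the upper bound one runs the controlled ODE/concentration analysis until the graph is polynomially small and then bounds the negligible remainder crudely; for the lower bound one exhibits the final near-critical episode and shows it forces $\gtrsim N^{1/5}$ unmatched vertices with probability bounded away from $0$, giving the stated bound on $\E{A_N}$.

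\emph{Main obstacle.} The hard part is this critical-window analysis at the end of Phase 2: one needs simultaneously sharp control of the \emph{mean} trajectory (to an order finer than the leading ODE) and of the \emph{fluctuations} of the entire degree sequence, while repeatedly re-establishing uniformity of the residual graph given its degree sequence and excluding atypically large cascades. The polylogarithmic gap between the lower and upper bounds on $\E{A_N}$ is the price of this imprecision: it comes from the error terms in the differential equations method, which are tight only up to logarithmic factors, and from the union bounds needed to control rare large deviations over the $\Theta(N)$ steps of the algorithm.
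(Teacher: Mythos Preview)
This theorem is not proved in the paper: it is quoted verbatim as a result of Aronson, Frieze and Pittel~\cite{KarpSipserRevisited} and used only as motivation for the fire model. There is therefore no ``paper's own proof'' to compare your proposal against.

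That said, your sketch is a fair high-level summary of the strategy in \cite{KarpSipserRevisited}: two-phase analysis, uniformity of the residual graph given its degree sequence, differential-equations method for the macroscopic trajectory, identification of the $c=e$ threshold from the leaf-removal dynamics, and, for $c>e$, a delicate critical-window analysis near the end of Phase~2 where the $N^{1/5}$ scaling emerges from balancing drift decay against $\sqrt{N}$-scale martingale fluctuations, with the polylogarithmic factors arising from the error terms and union bounds. As a proof proposal for the present paper it is misplaced, but as an outline of \cite{KarpSipserRevisited} it is broadly accurate.
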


Aronson, Frieze and Pittel conjecture that, in fact, $N^{-1/5} \E{A_N}$ converges as $N \to \infty$; indeed, one might also reasonably conjecture that $N^{-1/5} A_N$ possesses a limit in distribution as $N \to \infty$.  One of our aims in this paper is to make progress towards understanding how such a limit in distribution might arise.

The Karp-Sipser algorithm proceeds in two phases.  In the first phase (Phase I), the algorithm recursively attacks the pendant subtrees in the graph, matching as it goes.  Phase II starts the first time that the algorithm is forced to pick a uniform edge.  At the start of Phase II, the graph necessarily contains only vertices of degree 2 or more.  It will, in general, have several components, of which some may consist of isolated cycles.  In these cycle components, Karp-Sipser necessarily yields a maximum matching.  So the source of the ``mistakes'' is the complex components of the graph present at the start of Phase II.  Our original motivation for introducing the model studied in this paper is to understand the behaviour of the Karp-Sipser algorithm on the type of complex components appearing in Phase II.

For $c < e$, Phase I is essentially the whole story, apart from a few isolated cycles (which cannot cause ``mistakes'').  On the other hand, for $c > e$, a non-trivial graph remains at the end of Phase I.  The work of Aronson, Frieze and Pittel~\cite{KarpSipserRevisited} suggests that, for $c > e$, the part of the Karp-Sipser process which gives the dominant contribution to $A_N$ arises close to the end of Phase II.  Their analysis indicates the following heuristic picture for the structure of the graph at this point: ignoring log-corrections, it is approximately a uniform random graph with $\Theta_p(N^{3/5})$ vertices of degree 2, $\Theta_p(N^{2/5})$ vertices of degree 3 and $\Theta_p(N^{1/5})$ vertices of degree 4.  In a moment, we will describe the structure of this graph.

Before going any further, it will be useful to introduce the configuration model~\cite{BenderCanfield,BollobasConfiguration,WormaldThesis,WormaldConnectivity,WormaldShortCycles}.  Fix $n$ and a sequence $\mathbf{d} = (d_1, d_2, \ldots, d_n)$, where $d_i \ge 0$ is the degree of vertex $i$ and $\sum_{i=1}^nd_i$ is even.  First assign vertex $i$ a number $d_i$ of half-edges. Then choose a uniform random pairing of the half-edges.  This generates a random multigraph $CM(\mathbf{d})$.  For a fixed multigraph $\mathcal{M} = ([n],E(\mathcal{M}))$ having degrees $\mathbf{d}$, $s$ self-loops and $m_e$ the multiplicity of $e \in E(\mathcal{M})$, we have
\[
\Prob{CM(\mathbf{d}) = \mathcal{M}} \propto \frac{1}{2^s \prod_{e \in E(\mathcal{M})} m_e!}.
\]
In particular, assuming that there exists at least one simple graph with degrees $\mathbf{d}$, conditionally on the event that $CM(\mathbf{d})$ is simple, it is uniformly distributed on the set of simple graphs with those degrees.  (See the account in Chapter 7 of van der Hofstad~\cite{RemcoVol1} for proofs of these results.)

For a fixed $k \ge 2$, we will write $\mathrm{Dirichlet}_k(1,1,\ldots,1)$ for the uniform distribution on the simplex $\{\mathbf{x}= (x_1, x_2, \ldots, x_k): x_1, x_2, \ldots, x_k \ge 0, \sum_{i=1}^k x_i = 1\}$ (which is the simplest of the family of Dirichlet distributions). We will omit the subscript $k$ when the number of co-ordinates is clear from context.

Now fix $t, u > 0$ and $v \ge 0$, and suppose we start from a degree sequence with $\fl{tN^{3/5}}$ vertices of degree 2, $\fl{uN^{2/5}}$ of degree 3 and $\fl{vN^{1/5}}$ of degree 4.  Let $G_N$ be a uniform random graph with these degrees.  Let $K_N$ be the \emph{kernel} of $G_N$, that is the multigraph obtained by contracting paths of vertices of degree 2.  It is straightforward to see that $K_N$ is distributed according to the configuration model with $\fl{uN^{2/5}}$ vertices of degree 3 and $\fl{vN^{1/5}}$ vertices of degree 4. With probability tending to 1 as $N \to \infty$, $G_N$ possesses a giant complex component $C_N$, containing all of the vertices of degrees 3 and 4 (i.e.\ the kernel), as well as some random number $t'_N$ of the vertices of degree 2, where $t'_N/N^{3/5} \convprob t$ as $n \to \infty$.  On this event of high probability, $C_N$ is a uniform connected graph with its degree sequence.  Outside the giant, there is a collection of $O_p(\log N)$ disjoint cycles, containing the remaining vertices of degree 2.\footnote{We have not found a good reference for these statements, which are essentially folklore in the random graphs literature; statements in a similar spirit may be found, for example, in the recent paper of Joos, Perarnau, Rautenbach and Reed~\cite{Joosetal}. In particular, their Theorem 2 implies that $G_N$ contains a giant component with high probability. By Theorem 4.14 of van der Hofstad~\cite{RemcoVol2}, $K_N$ is connected with high probability.  That the giant component of $G_N$ contains $K_N$ and a proportion 1 of all vertices of degree 2 essentially comes down to the fact that the number of ways of generating a random 2-regular graph is negligible compared to the number of ways of generating a graph with kernel $K_N$.  (We omit the details since our primary aim here is not to make rigorous statements but rather to give heuristics.) Finally, a random 2-regular graph on $n$ vertices has $O_p(\log n)$ components (see, for example, Arratia, Barbour and Tavar\'e~\cite{ArratiaBarbourTavare}).}

Now consider the following way of constructing a complex component which we claim has approximately the same distribution as $C_N$.  First generate the kernel $K_N$ according to the configuration model with $\fl{uN^{2/5}}$ vertices of degree 3 and $\fl{vN^{1/5}}$ vertices of degree 4.  Then, one-by-one, allocate $t_N'$ vertices of degree 2 to the edges of $K_N$: at each step, an edge of the current structure is chosen uniformly at random, split into two edges in series, and the vertex is inserted into the middle.  Thus the lengths of the paths of degree-2 vertices we insert between neighbouring vertices in $K_N$ evolve according to a multicolour P\'olya's urn with $(3\fl{uN^{2/5}} + 4 \fl{vN^{1/5}})/2$ colours and a single ball of each colour to start.\footnote{By Proposition 7.13 of \cite{RemcoVol1}, $K_N$ possesses constant-order numbers of self-loops and multiple edges (indeed, the numbers of self-loops and multiple edges converge jointly in distribution as $N \to \infty$ to a pair of independent Poisson(1) random variables); in the urn process, there is negligible probability that we fail to allocate any vertices of degree 2 to the self-loops or to at least two of a set of edges between the same two vertices.}  Then for large $N$, the proportions of the $t_N'$ vertices of degree 2 which get allocated to each of the edges of $K_N$ look approximately like a $\mathrm{Dirichlet}(1,1,\ldots,1)$ vector, with $(3\fl{uN^{2/5}} + 4 \fl{vN^{1/5}})/2$ co-ordinates. 

Let us now consider how the Karp-Sipser algorithm behaves on $G_N$.  We may neglect the cycle components, as they can give rise to at most $O(\log N)$ unmatched vertices.  The algorithm first picks a uniform edge, matches its end-points, and then removes the neighbouring edges.  Typically we matched two vertices somewhere inside a long path of degree 2 vertices, and so we are now left with two pendant vertices, each at the end of a path of degree-2 vertices with a higher-degree vertex at its end.  If such a path is of odd length, Karp-Sipser will ``consume'' the degree-2 vertices but leave the higher-degree vertex untouched (except to reduce its degree by 1).  If, on the other hand, the path is of even length, the higher-degree vertex gets matched and removed, causing its neighbours to become pendant vertices.  Thus, in this case, the algorithm eats further away into the graph.  If ever a particular path gets eaten away at from both ends (which can happen since the graph has cycles), there is a chance that some vertex in the path will remain unmatched.  Again, whether this in fact happens or not depends on the parity of the path of degree-2 vertices.  For large enough $N$, we expect that such paths will be of odd and even lengths with approximately equal probability.  So we expect that paths which get burnt from both ends will give rise to an unmatched vertex with probability $1/2$.  

As we have already argued, since the number of edges in the multigraph is much smaller than the number of degree-2 vertices, the proportions of degree-2 vertices assigned to each edge of the multigraph will be, for large $N$, close to $\mathrm{Dirichlet}(1,1,\ldots,1)$.  For fixed $k \ge 2$, a vector $(D_1, D_2, \ldots, D_k)$ with $\mathrm{Dirichlet}_k(1,1,\ldots,1)$ distribution may be obtained by sampling $E_1, E_2, \ldots, E_k$, independent and identically distributed standard exponential random variables and setting
\begin{equation} \label{eqn:dir}
(D_1, D_2, \ldots, D_k) = \frac{1}{\sum_{i=1}^k E_i} (E_1, E_2, \ldots, E_k).
\end{equation}
The right-hand side is independent of the random variable $\sum_{i=1}^k E_i$.  Once we have accounted for parity, the lengths of the paths of degree-2 vertices play a role only when we pick a new uniform edge to match, which we do with probability proportional to length. So only \emph{relative} lengths matter, and we can equivalently think of $E_1, \ldots, E_k$ as the ``lengths'' of the paths of degree 2 vertices in our approximate model.  (In what follows, we will primarily use $\mathrm{Dirichlet}(1,1,\ldots,1)$ edge-lengths, but it is convenient to be able to move back and forth between these two points of view.)

In summary, letting $n$ be the number of vertices of degree 3 and $\alpha(n)$ be the number of vertices of degree 4, we obtain the model described in Section~\ref{Sect Model} as an approximation.  We do not attempt here to assess the quality of this approximation (and we only make rigorous statements about the model described in Section~\ref{Sect Model}).  Rather our interest is in the mechanism by which the distribution of the number of vertices which remain unmatched at the end of the Karp-Sipser algorithm arises.  
With the scaling suggested by Aronson, Frieze and Pittel, we would have $n = \Theta(N^{2/5})$ and $\alpha(n) = \Theta(N^{1/5})$ i.e.\ $\alpha(n) = \Theta(\sqrt{n})$. So we should be in regime (i) of Theorem~\ref{thm:main}, which supports the conjecture that $N^{-1/5} A_N$ possesses a limit in distribution.

\subsection{Our analysis}

Our model has two convenient features which make it amenable to analysis: the distributional properties of the edge-lengths and the fact that we may sample the multigraph edge by edge at the same time that we burn it.  Let us first address the edge-lengths.  We will make use of the following result (Proposition 1 of \cite{BertoinG}), which follows from standard properties of exponential random variables via the relationship (\ref{eqn:dir}).

\begin{prop} \label{prop:exponentialmagic}
(i) Suppose that $(D_1, D_2, \ldots, D_k) \sim \mathrm{Dirichlet}_k(1,1,\ldots,1)$ and let $I$ be a random index from $\{1,2,\ldots, k\}$ with conditional distribution
\[
\Prob{I = i| D_1, \ldots, D_k} = D_i, \quad 1 \le i \le k.
\]
Let $U$ be independent of everything else with uniform distribution on $[0,1]$.  Then defining
\[
D'_i = \begin{cases}
D_i & \text{ for $1 \le i < I$}, \\
U D_i & \text{ for $i = I$}, \\
(1-U) D_i & \text{ for $i = I+1$}, \\
D_{i-1} & \text{ for $I+1 < i \le k+1$},
\end{cases}
\]
we have that $(D'_1, D'_2, \ldots, D'_{k+1})$ has $\mathrm{Dirichlet}_{k+1}(1,1,\ldots,1)$ distribution.

(ii) Suppose that $(D'_1, D'_2, \ldots, D'_{k+1}) \sim \mathrm{Dirichlet}_{k+1}$ and that $J$ is chosen independently and uniformly at random from $\{1,2,\ldots, k\}$. Then defining
\[
D_i = \begin{cases}
D'_i & \text{ for $1 \le i < J$},\\
D'_i+D'_{i+1} & \text{ for $i = J$},\\
D'_{i+1} & \text{ for $J+1 < i \le k$},
\end{cases}
\]
we have that $(D_1, D_2, \ldots, D_k) \sim \mathrm{Dirichlet}_k(1,1,\ldots,1)$.
\end{prop}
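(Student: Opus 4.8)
The plan is to prove both parts by lifting to the classical description of a $\mathrm{Dirichlet}_m(1,\ldots,1)$ vector as the vector of spacings cut out by $m-1$ independent uniform points on $[0,1]$ — a description equivalent, via ordering partial sums of i.i.d.\ exponentials, to the representation in (\ref{eqn:dir}). Concretely, I would use the standard fact that if $V_1,\ldots,V_{m-1}$ are i.i.d.\ $\mathrm{Unif}[0,1]$ with order statistics $0 = V_{(0)} < V_{(1)} < \cdots < V_{(m-1)} < V_{(m)} = 1$, then $(V_{(1)}-V_{(0)},\, \ldots,\, V_{(m)}-V_{(m-1)}) \sim \mathrm{Dirichlet}_m(1,\ldots,1)$, applied with $m = k$ and $m = k+1$. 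Parts (i) and (ii) then become, respectively, ``insert a uniform point'' and ``delete a uniform point'' on this picture.

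For part (i): start from $V_1,\ldots,V_{k-1}$ i.i.d.\ uniform and let $D=(D_1,\ldots,D_k)$ be the associated spacings, so $D \sim \mathrm{Dirichlet}_k(1,\ldots,1)$. Introduce an extra point $V_k \sim \mathrm{Unif}[0,1]$, independent of the rest, let $I$ be the index of the subinterval $(V_{(I-1)},V_{(I)})$ containing $V_k$, and let $U = (V_k - V_{(I-1)})/D_I$ be its relative position in that interval. The key checks are that, conditionally on $D$, the event $\{I=i\}$ has probability equal to the interval length $D_i$, and that, conditionally on $(D,I)$, the variable $U$ is uniform on $[0,1]$; hence $(D,I,U)$ has exactly the joint law prescribed in the statement. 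On the other hand $V_1,\ldots,V_k$ are now $k$ i.i.d.\ uniforms, so their $k+1$ spacings form a $\mathrm{Dirichlet}_{k+1}(1,\ldots,1)$ vector; and inserting $V_k$ leaves the intervals before $I$ unchanged, splits interval $I$ into pieces of lengths $UD_I$ and $(1-U)D_I$, and shifts the later intervals up by one — which is precisely the recipe defining $D'$. Therefore $D' \sim \mathrm{Dirichlet}_{k+1}(1,\ldots,1)$.

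For part (ii): run the same picture in reverse. Take $V_1,\ldots,V_k$ i.i.d.\ uniform, let $D'=(D'_1,\ldots,D'_{k+1})$ be their spacings (so $D' \sim \mathrm{Dirichlet}_{k+1}$), and let $J$ be the rank of the last point $V_k$ among all $k$ points. By exchangeability of $V_1,\ldots,V_k$, the rank $J$ is uniform on $\{1,\ldots,k\}$ and independent of the order statistics $(V_{(1)},\ldots,V_{(k)})$, hence independent of $D'$, so $(D',J)$ has the joint law required in the statement. Deleting the point $V_{(J)} = V_k$ leaves $V_1,\ldots,V_{k-1}$, whose $k$ spacings are $\mathrm{Dirichlet}_k(1,\ldots,1)$; and this deletion merges the two intervals adjacent to $V_{(J)}$, namely old intervals $J$ and $J+1$, leaving the others in place — exactly the recipe for $D$ in the statement. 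Hence $D \sim \mathrm{Dirichlet}_k(1,\ldots,1)$.

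The only steps that are not purely formal — and hence the main points to get right — are the bookkeeping that identifies the interval-splitting (resp.\ interval-merging) operation on spacings with the explicit index shifts written in the definition of $D'$ (resp.\ $D$), and the verification in each case that the auxiliary randomness generated by the construction ($I,U$ in (i); $J$ in (ii)) has precisely the joint distribution with the Dirichlet vector stipulated in the statement — a size-biased index together with an independent uniform in (i), and an independent uniform index in (ii). Both are straightforward once the uniform-spacings representation is set up. If one prefers to stay closer to the hint, the same two conclusions follow from a direct change-of-variables computation in the exponential coordinates of (\ref{eqn:dir}): split a coordinate $E_I \mapsto (UE_I,(1-U)E_I)$, or merge $E'_J, E'_{J+1} \mapsto E'_J + E'_{J+1}$, compute the Jacobian, and read off that the normalised vector has constant density on the simplex.
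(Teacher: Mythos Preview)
Your proof is correct. The paper does not actually prove this proposition: it cites it as Proposition~1 of \cite{BertoinG} and remarks only that it ``follows from standard properties of exponential random variables via the relationship (\ref{eqn:dir}).'' Your argument via the uniform-spacings representation of $\mathrm{Dirichlet}_m(1,\ldots,1)$ is a clean and fully detailed realisation of exactly this idea (the spacings and exponential representations being equivalent), and your closing paragraph makes the link to the exponential coordinates explicit. The checks you flag --- that $(I,U)$ has the required conditional law given $D$ in part~(i), and that the rank $J$ is independent of the order statistics (hence of $D'$) in part~(ii) --- are the right places to be careful, and both are handled correctly.
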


Using part (i) of this proposition, we see that if we start from $\mathrm{Dirichlet}(1,1,\ldots,1)$ edge-lengths and pick a point uniformly from the length measure, then splitting at it yields distances from the sampled point to the adjacent vertices of degree at least 3 either side of it which are again part of a $\mathrm{Dirichlet}(1,1,\ldots,1)$ vector (with one more co-ordinate).  We will see in what follows that the property that (given the number of edges in the multigraph) the relative edge-lengths are $\mathrm{Dirichlet}(1,1,\ldots,1)$ is preserved. Consider the process at an instant when the fire has just reached a vertex, or when a point has just been set alight.  In general there are several edges burning, and using the memoryless property of the exponential distribution, their relative distances to the adjacent vertices are still standard exponential divided by the sum of those exponentials.  In particular, whenever there are multiple edges burning, the next to reach its vertex is chosen uniformly from among all those present.  Since we are not interested in how long the whole process takes but rather in the number of times we must set light to the network and how many clashes we observe, we perform our analysis in discrete time: that is, we consider the multigraph \emph{without} edge-lengths, always set fire to a uniformly-chosen edge (which has the effect of splitting it into two edges, both alight), and we choose which edge next finishes burning uniformly from among those currently alight.

Recall the description of the configuration model from the previous section.   We may generate the pairing of the half-edges in any order we like, which makes the configuration model particularly amenable to an exploration-process-type analysis.  In particular, given that we have revealed the pairings of a particular collection of half-edges, assuming we keep track of the degree sequence of the rest of the graph, the rest of the graph is again a configuration model with that degree sequence.  We exploit this property below.  Observe that we have $n$ vertices of degree 3 and $\alpha(n)$ of degree 4 in our configuration model.  Our process starts by picking a uniform edge, splitting it in two and setting each resulting edge alight.  (Let us refer to this as the beginning of a \emph{wave}, with a new wave beginning every time we set light to a point in the multigraph.)  A uniform one of these two edges reaches its vertex next.  It samples its vertex from among those of degree 3 with probability $3n/(3n+4\alpha(n))$ and from those of degree 4 with probability $4\alpha(n)/(3n+4\alpha(n))$.  Thereafter, we may think of edges which are alight as the first half-edge of a pair, whose second half-edge we have yet to sample.  

Suppose that we currently have $x$ edges burning, and that there are $u$ vertices with 3 unattached half-edges and $v$ vertices with 4 unattached half-edges.  We pick the next half-edge to process uniformly from those currently burning, and pick its pair uniformly at random from among those available, including any which are themselves burning.  The pair half-edge is already burning with probability $(x-1)/(3u + 4v + x-1)$, in which case we form an edge which is burning from both ends and  generate a clash.  Otherwise, if we connect to a vertex of degree 3, which occurs with probability $3u/(3u+4v+x-1)$, the fire is either passed to the two other half-edges (with probability $1/2$) or stopped.  If it is stopped, the vertex of degree 3 becomes a vertex of degree 2.  There are two different things that might happen to this vertex of degree 2.  With probability $1/(3u +4v +x -3)$, its two half-edges are in fact connected to each other to form an isolated cycle. (As observed above, this is a rare event: there are only $O(1)$ self-loops in the whole multigraph.)  Any such isolated cycle necessarily yields a clash. With the complementary probability, the two remaining half-edges are not connected to each other but rather to other half-edges. Since vertices of degree 2 cannot stop fires, in this case we may simply remove the vertex and contract the path of length 2 in which it sat to a single edge.  (Using part (ii) of Proposition~\ref{prop:exponentialmagic}, this results the relative lengths of the edges in the unseen parts of the multigraph still being $\mathrm{Dirichlet}(1,1,\ldots,1)$ distributed).  In either case, the remaining unseen part of the multigraph (after deletion of the loop, or contraction of a path of length 2) is still distributed according to the configuration model with the updated degree distribution.  Finally, if we connect to a vertex of degree 4, which occurs with probability $4v/(3u+4v+x-1)$, then either the fire is passed to all three other neighbours (with probability $1/2$) or to none of them, in which case the result is that we get another vertex of degree 3.  

Note that we must treat the very first edge of a wave differently: although we start with two burning half-edges, they cannot be paired to each other (since otherwise there would be an edge in the original graph with no vertex, which is impossible).  So let us treat the first step of a wave as consisting of picking a uniform edge, splitting it in two, sampling the vertex to which one of the resulting burning edges is attached and seeing whether it passes the fire on or not.  So if at some step, there are no burning half-edges, on the next step there will be either 1, 3 or 4 burning half-edges, corresponding to the events that the first of the two fires was stopped, that it was passed on through a vertex of degree 3, or that it was passed on through a vertex of degree 4.

In this way, we see that at each step of the procedure, we either process one vertex or generate a clash.  A vertex of degree 3 is processed precisely once; a vertex of degree 4 is processed once or twice, depending on whether it stops the first fire it encounters or not.

If we track the numbers of vertices of degree 3 and 4 and the numbers of currently burning half-edges, we have a Markovian evolution, which we may hope to analyse using the tools of stochastic process theory.  In particular, in what follows we make extensive use of martingales.

The rest of the paper is organized as follows. In Section~\ref{Sect Markov chain}, we write down explicitly the transition probabilities of our Markov chain, and give some first estimates relevant for the forthcoming analysis. In particular, we identify a coupling which facilitates our analysis of the end of the process. In Section~\ref{Sect fluid limit approximations}, we prove fluid limits for the suitably rescaled number of nodes of degree $3$ and $4$; that is, we show that these processes remain close to deterministic functions on a time-interval which is bounded away from the end of the process (this is an application of the so-called differential equations method \cite{DarlingNorris, Wormald}). In Section~\ref{Sect Fluid limit X N large omega}, we analyse the case $\alpha(n)\gg\sqrt{n}$, and prove a fluid limit result for the (suitably rescaled) numbers of fires and clashes we observe, as long as we are bounded away from the end of the process. Section~\ref{Sect Diffusion limit small omega} deals with the limiting properties of the numbers of fires and the number of clashes we observe, again as long as we bounded away from the end of the process, for $\alpha(n)=O(\sqrt{n})$. In this case, the limiting process for the number of fires is a reflected diffusion, and the proof is based on an invariance principle for reflecting Markov chains (in the spirit of \cite{EthierKurtz, KangWilliams, StroockVaradhan}). Finally, in Section~\ref{sec:end}, we prove that the end of the process does not contribute significantly to any of these quantities, and so the convergence results can be extended into that range also.

%
%
\section{The Markov chain} \label{Sect Markov chain}
For $i \ge 0$, let $U^{n}({i})$ and $V^{n}({i})$ represent the numbers of nodes of degree $3$ and $4$ respectively after $i$ steps of the burning procedure. Let $X^{n}({i})$ be the number of burning half-edges we have after $i$ steps. Let $N^{n}({i})$ be the counting process of the number of clashes observed up to step $i$. 
Set $U^n(0) = n$, $V^n(0) = \alpha(n)$,  $X^n(0) = 0$, and $N^n(0) = 0$.  We have already argued that the process
\[
(U^n(i), V^n(i), X^n(i), N^n(i))_{i \ge 0}
\]
evolves in a Markovian manner until time $\zeta_n = \inf\{i \ge 0: U^n(i) + V^n(i) + X^n(i) = 0\}$, when it stops.  Write
\[
L^n(k) = 2 \sum_{i=0}^{k-1} \I{X^n(i) = 0}
\]
(we will think of this quantity as a local time). Then
\[
F^n = \frac{1}{2} L^n(\zeta_n), \quad C^n = N^n(\zeta_n).
\]
We will find it convenient to rescale time by $n$ (essentially because we start with $n + \alpha(n)$ vertices and a typical step involves the removal of a vertex of degree 3).

Recall the definition of the process $X^a$ from (\ref{eqn:Xdefn}).
We also let $x: [0,1] \to \R_+$ be the (deterministic) function
\[
x(t) = (1-t)^{2/3} - (1-t)^{4/3}, \quad 0 \le t \le 1.
\]
For $0 \le t \le 1$, let
\[
N^a_t = \int_0^t \frac{X^a_s}{3(1-s)} ds, \qquad m(t) = \frac{1}{4}-\frac{1}{2}(1-t)^{2/3}+\frac{1}{4}(1-t)^{4/3}.
\]

Let $\mathbb{D}(\R_+,\R)$ denote the space of c\`adl\`ag functions from $\R_+$ to $\R$, equipped with the Skorokhod topology. We will study the convergence of the sequence of probability measures on the measurable space given by $\mathbb{D}(\R_+,\R)$ endowed with its Borel $\sigma$-algebra. (In fact, since our limit processes will always be continuous, we will rather obtain convergence with respect to the uniform norm.) The crux of our argument is the following scaling limit theorem.

\begin{thm} \label{thm:scalinglimit}
(i) Suppose $\alpha(n)/\sqrt{n} \to a$ as $n \to \infty$, for $a \ge 0$.  Then
\[
\frac{1}{\sqrt{n}} \left(X^{n}({\fl{nt}}), L^n(\fl{nt}), N^n(\fl{nt}), 0 \le t \le \zeta_n/n \right) \convdist (X^a_t, L^a_t, N^a_t, 0 \le t \le 1)
\]
uniformly as $n \to \infty$.

(ii) Suppose $\alpha(n)\gg\sqrt{n}$.  Then
\[
\frac{1}{\alpha(n)}  \left(X^{n}(\fl{nt}), L^n(\fl{nt}), N^n(\fl{nt}), 0 \le t \le \zeta_n/n \right) \convdist (x(t), 0, m(t) , 0 \le t \le 1)
\]
uniformly as $n \to \infty$.
\end{thm}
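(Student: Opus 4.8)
The plan is to prove the stated convergence first on a time interval $[0,1-\varepsilon]$ that stays away from the end of the burning process, and then to show separately that the endgame $[1-\varepsilon,\zeta_n/n]$ contributes negligibly as $\varepsilon\downarrow 0$. The starting point, carried out at the beginning of Section~\ref{Sect Markov chain}, is to write down the exact one-step transition probabilities of $(U^n,V^n,X^n,N^n)$: from a state with $x\ge 1$ burning half-edges, $u$ vertices of degree~$3$ and $v$ of degree~$4$, the processed half-edge pairs to another burning half-edge (a clash: $\Delta X=-2$, $\Delta N=+1$) with probability $(x-1)/(3u+4v+x-1)$, to a degree-$3$ vertex ($\Delta U=-1$, and $\Delta X=+1$ or $\Delta X=-1$ according to whether the fire is passed on) with probability $3u/(3u+4v+x-1)$, and to a degree-$4$ vertex ($\Delta V=-1$, and $(\Delta U,\Delta X)=(0,+2)$ or $(+1,-1)$) with probability $4v/(3u+4v+x-1)$; there is also the rare isolated-cycle move, and a special $x=0$ move that starts a wave (then $X^n$ jumps to $1$ or $3$, each with probability $\tfrac12+o(1)$, or more rarely to $4$). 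From these one computes, on the bulk of the process where $u$ is of order $n$ and $v,x$ of smaller order, that $\mathbb{E}[\Delta U^n\mid \mathcal{F}_i]=-1+o(1)$, $\mathbb{E}[\Delta V^n\mid \mathcal{F}_i]=-\tfrac{4v}{3u}+o(\tfrac{v}{u})$, $\mathbb{E}[\Delta X^n\mid \mathcal{F}_i]=\tfrac{2(v-x)}{3u}+o(\cdot)$, $\mathbb{E}[\Delta N^n\mid \mathcal{F}_i]=\tfrac{x}{3u}+o(\cdot)$ and $\mathrm{Var}(\Delta X^n\mid \mathcal{F}_i)=1+o(1)$, and that the total number of isolated-cycle moves is stochastically $O(1)$ uniformly in $n$.

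Next, in Section~\ref{Sect fluid limit approximations}, I apply the differential equations method of Wormald / Darling--Norris \cite{DarlingNorris,Wormald} to $(U^n,V^n)$. Since $\alpha(n)=o(n)$, $V^n$ must be rescaled by $\alpha(n)$; the drifts above yield the ODEs $u'=-1$, $u(0)=1$ and $\tilde v'=-\tfrac43\tilde v/(1-t)$, $\tilde v(0)=1$, with solutions $u(t)=1-t$ and $\tilde v(t)=(1-t)^{4/3}$, so that $n^{-1}U^n(\fl{nt})\convprob 1-t$ and $\alpha(n)^{-1}V^n(\fl{nt})\convprob (1-t)^{4/3}$ uniformly on $[0,1-\varepsilon]$. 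For part~(ii), where $\alpha(n)\gg\sqrt n$, I work with $\alpha(n)^{-1}X^n$: its martingale part has quadratic variation $O(n)$, hence is $o(\alpha(n))$ after rescaling, so feeding the fluid limits for $U^n,V^n$ into $\mathbb{E}[\Delta X^n\mid\mathcal{F}_i]$ and using a standard concentration argument gives $\alpha(n)^{-1}X^n(\fl{nt})\convprob x(t)$, where $x$ solves $x'=\tfrac23\big((1-t)^{4/3}-x\big)/(1-t)$, $x(0)=0$, i.e.\ $x(t)=(1-t)^{2/3}-(1-t)^{4/3}$; similarly $\alpha(n)^{-1}N^n(\fl{nt})\convprob\int_0^t\tfrac{x(s)}{3(1-s)}\,ds=m(t)$. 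Since $x(t)>0$ on $(0,1-\varepsilon]$, with high probability $X^n$ never returns to $0$ after the first wave, so $\alpha(n)^{-1}L^n(\fl{nt})\convprob 0$. This is Section~\ref{Sect Fluid limit X N large omega} and establishes Theorem~\ref{thm:scalinglimit}(ii) on $[0,1-\varepsilon]$.

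For part~(i), where $\alpha(n)/\sqrt n\to a$, the process $X^n$ is genuinely of order $\sqrt n$ and the fluctuations survive. Writing $\tilde X^n_t=n^{-1/2}X^n(\fl{nt})$, the Doob decomposition of the rescaled chain, together with Step~1 and the fluid limits (now $n^{-1/2}V^n(\fl{nt})\convprob a(1-t)^{4/3}$), produces a drift converging to $-\tfrac23 X/(1-t)+\tfrac{2a}{3}(1-t)^{1/3}$ and a martingale part whose rescaled predictable quadratic variation converges to $t$; the rescaled clash counter has drift converging to $X/(3(1-t))$. The delicate point is the behaviour at $0$: each hitting of $0$ starts a new wave, $X^n$ jumps up by $O(1)$, and $L^n$ equals twice the number of such restarts — precisely the structure of a reflecting Markov chain. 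I will invoke an invariance principle for such chains, in the spirit of \cite{EthierKurtz,KangWilliams,StroockVaradhan}, to conclude that $(\tilde X^n,n^{-1/2}L^n(\fl{n\cdot}),n^{-1/2}N^n(\fl{n\cdot}))\convdist(X^a,L^a,N^a)$ uniformly on $[0,1-\varepsilon]$, with $X^a$ the solution of the reflected SDE \eqref{eqn:Xdefn}; weak uniqueness of the limit is guaranteed by the explicit solution formula recorded before Theorem~\ref{thm:main}. This is Section~\ref{Sect Diffusion limit small omega}.

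Finally, Section~\ref{sec:end} handles the endgame: I must show $\zeta_n/n\convprob 1$ and that the increments of $L^n$ and $N^n$ over $[(1-\varepsilon)n,\zeta_n]$ are $o_p(\sqrt n)$ in case~(i) and $o_p(\alpha(n))$ in case~(ii), uniformly as $\varepsilon\downarrow 0$. Here the fluid and diffusion approximations genuinely break down, because the coefficient $1/(1-t)$ blows up and the number of vertices becomes small, so instead I will use the coupling identified in Section~\ref{Sect Markov chain} to dominate the tail of the process by a tractable comparison process and bound its moments directly. Combined with the tightness already obtained, this extends all the convergences to $[0,1]$ and shows in particular that $X^a_1,L^a_1,N^a_1$ exist and are almost surely finite; Theorem~\ref{thm:main} then follows from Theorem~\ref{thm:scalinglimit} and the continuous mapping theorem evaluated at $t=1$, via $F^n=\tfrac12 L^n(\zeta_n)$ and $C^n=N^n(\zeta_n)$. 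I expect the endgame of Section~\ref{sec:end} to be the main obstacle — a degenerate, non-autonomous regime near $t=1$ to which none of the scaling-limit machinery applies directly — with the reflecting-diffusion invariance principle of the previous step a close second, since one must match the discrete wave-restart mechanism precisely to Brownian local time.
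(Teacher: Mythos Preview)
Your proposal follows essentially the same architecture as the paper's proof: fluid limits for $U^n,V^n$ via the differential equations method, then a fluid (resp.\ reflecting-diffusion) limit for $X^n$ in case (ii) (resp.\ (i)), and finally a coupling-based endgame argument on $[1-\varepsilon,\zeta_n/n]$. The identification of the limiting ODEs and SDE, the role of the $2\I{X^n(i)=0}$ term as discrete local time, and the use of the invariance principle of \cite{EthierKurtz,KangWilliams,StroockVaradhan} all match.

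There is, however, a genuine circularity in your treatment of $L^n$ in case (ii). You write that ``since $x(t)>0$ on $(0,1-\varepsilon]$, with high probability $X^n$ never returns to $0$ after the first wave''. This is false as stated: near $t=0$ the limiting trajectory satisfies $x(0)=0$, and the drift of $X^n$ is only of order $\alpha(n)/n=o(1)$ per step, so before that drift has accumulated enough to push $X^n$ away from $0$ the process behaves like a near-centred random walk and revisits $0$ on the order of $\sqrt{n}$ times. More to the point, the argument is circular: to deduce $X^n/\alpha(n)\approx x(t)$ from the fluid-limit machinery you must first control the $2\I{X^n(i)=0}$ contribution to the drift, i.e.\ you must already know $L^n(\fl{nt})=o(\alpha(n))$; but your only justification for $L^n=o(\alpha(n))$ appeals to that same fluid limit.

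The paper breaks this circularity by a separate argument (Lemma~\ref{lem:slemma} and Proposition~\ref{prop:localtime}) that does not use the fluid limit for $X^n$. One first finds a small $t_0>0$ with $X^n<V^n$ on $[0,t_0 n]$ with high probability, via a comparison with an upward-drifting walk; then one couples $X^n$ from below by a simpler reflected process $D^n$ whose local time at $0$, by the martingale functional CLT, is $\Theta_p(\sqrt{n})$. Since $\alpha(n)\gg\sqrt{n}$ this gives $L^n(\fl{nt_0})/\alpha(n)\convprob 0$ directly. Only with this in hand is the fluid limit for $X^n$ established on $[t_0,1-\varepsilon]$, after which one can legitimately argue that $X^n$ stays away from $0$. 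You will need to insert an argument of this kind.
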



It is straightforward to see that this implies Theorem~\ref{thm:main}.

\subsection{Transition probabilities}

We have already described the possible transitions of our four-dimensional Markov chain in the Introduction.  Let us be a little more explicit about the transition probabilities.

Suppose that $(U^n(i), V^n(i), X^n(i), N^n(i)) = (u,v,x,m) \in \Z_+^4$.  If $x = 0$ but $u+v > 0$, then
\begin{align*}
& (U^n(i+1), V^n(i+1), X^n(i+1), N^n(i+1))  \\
& \qquad = \begin{cases}
(u-1, v, 1, m+1) & \text{with probability $\frac{3u}{2(3u + 4v)(3u+4v-2)}$,} \\
(u-1, v, 1, m) & \text{with probability $\frac{3u(3u+4v-3)}{2(3u + 4v)(3u+4v-2)}$,} \\
(u-1, v, 3, m) & \text{with probability $\frac{3u}{2(3u+4v)}$,} \\
(u+1, v-1, 1, m) & \text{with probability $\frac{4v}{2(3u+4v)}$,} \\
(u, v-1, 4, m) & \text{with probability $\frac{4v}{2(3u+4v)}$}. \\
\end{cases}
\intertext{If $x > 0$ then}
& (U^n(i+1), V^n(i+1), X^n(i+1), N^n(i+1)) \\
& \qquad = 
\begin{cases}
(u-1, v, x-1, m+1) & \text{with probability $\frac{3u}{2(3u+4v + x-1)(3u + 4v + x-3)}$,} \\
(u-1, v, x-1, m) & \text{with probability $\frac{3u(3u+4v+x-4)}{2(3u+4v+x-1)(3u+4v+x-3)}$,} \\
(u-1, v, x+1, m) & \text{with probability $\frac{3u}{2(3u+4v+x-1)}$,} \\
(u+1, v-1, x-1, m) & \text{with probability $\frac{4v}{2(3u+4v+x-1)}$,} \\
(u, v-1, x+2, m) & \text{with probability $\frac{4v}{2(3u+4v+x-1)}$,} \\
(u, v, x-2, m+1) & \text{with probability $\frac{x-1}{(3u+4v+x-1)}$}. \\
\end{cases}
\end{align*}


Writing $(\mathcal{F}_i^n)_{i \ge 0}$ for the natural filtration of the four-dimensional process, we have
\begin{align*}
\E{U^n(i+1) - U^n(i) | \mathcal{F}^n_i} 
& = \frac{4V^n(i) - 3 U^n(i)}{3 U^n(i) + 4 V^n(i) + X^n(i) - \I{X^n(i) > 0}},  \\
\E{V^n(i+1) - V^n(i) | \mathcal{F}^n_i}
& = \frac{-4V^n(i)}{3 U^n(i) + 4 V^n(i) + X^n(i) - \I{X^n(i) > 0}}, \\
\E{X^n(i+1) - X^n(i) |  \mathcal{F}^n_i} 
& = 2 \I{X^n(i) = 0} + \frac{2V^n(i) -2X^n(i) + 2 \I{X^n(i) > 0}}{3U^n(i) + 4 V^n(i) + X^n(i) -  \I{X^n(i) > 0}},  \numberthis \label{eq exp changes X} \\
 \E{N^n(i+1) - N^n(i) |  \mathcal{F}^n_i} 
& = \tfrac{3U^n(i) + 2(X^n(i) - \I{X^n(i) > 0})(3U^n(i) + 4V^n(i) + X^n(i) - 3)}{2(3U^n(i) + 4V^n(i) + X^n(i) -\I{X^n(i) > 0})(3U^n(i) + 4V^n(i) + X^n(i) -\I{X^n(i) > 0} - 2)}. 
\end{align*}
We will also need the following conditional second moments:
\begin{align}
 \E{(U^n(i+1) - U^n(i))^2 | \mathcal{F}^n_i} & = \frac{3U^n(i) + 2 V^n(i)}{3U^n(i) + 4V^n(i) + X^n(i) - \I{X^n(i) > 0}}, \notag \\ 
 \E{(V^n(i+1) - V^n(i))^2 | \mathcal{F}^n_i} & = \frac{4V^n(i)}{3U^n(i) + 4V^n(i) + X^n(i) - \I{X^n(i) > 0}}, \notag \\ 
 \E{(X^n(i+1) - X^n(i))^2 | \mathcal{F}^n_i} &
= \begin{cases}
5 + \frac{14V^n(i)}{3U^n(i) + 4V^n(i)} & \text{if $X^n(i) = 0$,}\\
1 + \frac{6V^n(i) + 3X^n(i) -3}{3U^n(i) + 4V^n(i) + X^n(i) - 1} & \text{if $X^n(i) > 0$},\\
\end{cases} \label{eqn:squareincrX} \\
\E{(N^n(i+1) - N^n(i))^{2} |  \mathcal{F}^n_i} 
& = \E{N^n(i+1) - N^n(i) | \mathcal{F}^n_i}. \notag
\end{align}

\subsection{A coupling and some first estimates} \label{sec:coupling}
The process runs for $\zeta_n$ steps.  On each step, we process a whole edge of the multigraph, except at the start of a wave, when we possibly need two steps to process an edge.  There are $(3n+4\alpha(n))/2$ edges in total and so we get the crude bound
\begin{equation} \label{eqn:crudebound}
\zeta_n \le 3n + 4\alpha(n).
\end{equation}

In the sequel, and particularly in Section~\ref{sec:end}, we will make extensive use of a coupling of a modified version of $X^n$ and a reflecting simple symmetric random walk, which we now introduce.  First, we divide the burning half-edges into two stacks, of sizes $X^n_1(i)$ and $X^n_2(i)$, such that $X^n(i) = X^n_1(i) + X^n_2(i)$ for all $0 \le i \le \zeta_n$.  We may give these sub-processes whatever dynamics we choose, as long as their sum behaves as $(X^n(i))_{i \ge 0}$.  We proceed as follows. 

Whenever $X^n_1(i) > 0$, we select the next half-edge to process from the first stack.  If the fire is absorbed, $X^n_1$ is simply reduced by 1.  If we connect to a vertex of degree 3 and pass the fire on, $X^n_1$ increases by 1.  If we connect to a vertex of degree 4 and pass the fire on, we let each of $X^n_1$ and $X^n_2$ increase by 1.  If we create a clash with a half-edge from the first stack, $X^n_1$ is reduced by 2.  We may also create a clash with a vertex from the second stack, in which case $X^n_1$ and $X^n_2$ are both reduced by 1.

If $X^n_1(i) = 0$ but $X^n_2(i) > 0$, then we select the half-edge to process from the second stack, add any new half-edges arising from passing the fire on to a vertex of degree 3 to $X_1^n$ and add 2 of the three burning half-edges arising from passing the fire on to a vertex of degree 4 to $X_1^n$ and the last one to $X^n_2$.  Finally, if $X^n_1(i) = X^n_2(i) = 0$, then we allocate all new half-edges to the first stack, except if we connect to a vertex of degree 4 and pass on the fire, in which case $X^n_1$ jumps to 3 and $X^n_2$ jumps to 1.

We will also track the clashes and split them according to whether they involve a half-edge from the second stack or not, yielding $N^n(i) = N_1^n(i) + N_2^n(i)$ for $i \ge 0$.

We will describe the transition probabilities of this process in detail below.  Our aim is to couple $X_1^n$ with a process $Y^n$ in such a way that $X_1^n(i) \le Y^n(i)$ for all $i \ge 0$ and $Y^n$ is a simple symmetric random walk (SSRW), reflected at 2.  In order to keep the notation to a reasonable level, we will describe the transitions of $(X_1^n(i), X_2^n(i), Y^n(i), N^n_1(i), N_2^n(i))_{i \ge 0}$, leaving those of $(U^n(i), V^n(i))_{i \ge 0}$ implicit.

Conditional on $X_1^n(i) = x_1 > 0, X_2^n(i) = x_2 \ge 0, Y^n(i) = y \ge 3, U^n(i) = u, V^n(i) = v$, with $x=x_1+x_2$,
\[
\begin{aligned}
& (X_1^n(i+1) - X_1^n(i), X_2^n(i+1) - X_2(i), Y^n(i+1) - Y^n(i), N^n_1(i+1) -N^n_1(i),  N^n_2(i+1) -N^n_2(i)) \\
& \qquad \qquad = \begin{cases} 
(-2, 0, -1, +1, 0) & \text{with probability $\frac{x_1-1}{2(3u+4v+x-1)}$} \\
(-2,0, +1, +1, 0) & \text{with probability $\frac{x_1-1}{2(3u+4v+x-1)}$} \\
(-1, -1, -1, 0, +1) & \text{with probability $\frac{x_2}{2(3u+4v+x-1)}$} \\
(-1,-1, +1, 0, +1) & \text{with probability $\frac{x_2}{2(3u+4v+x-1)}$} \\
(-1, 0, -1, +1, 0) & \text{with probability $\frac{3u}{2(3u+4v + x-1)(3u + 4v + x-3)}$} \\
(-1,0, -1, 0, 0) & \text{with probability $\frac{4v}{2(3u+4v+x-1)} + \frac{3u(3u+4v+x-4)}{2(3u+4v + x-1)(3u + 4v + x-3)}$} \\
(+1,0, +1, 0, 0) & \text{with probability $\frac{3u}{2(3u+4v+x-1)}$} \\
(+1,+1,+1, 0, 0) & \text{with probability $\frac{4v}{2(3u+4v+x-1)}$}. \\
\end{cases}
\end{aligned}
\]
Conditional on $X_1^n(i) = x_1 > 0, X_2^n(i) = x_2 \ge 0, Y^n(i) = y = 2, U^n(i) = u, V^n(i) = v$, with $x=x_1+x_2$,
\[
\begin{aligned}
& (X_1^n(i+1) - X_1^n(i), X_2^n(i+1) - X_2(i), Y^n(i+1) - Y^n(i), N^n_1(i+1) -N^n_1(i),  N^n_2(i+1) -N^n_2(i)) \\
& \qquad \qquad = \begin{cases} 
(-2, 0, +1, +1, 0) & \text{with probability $\frac{x_1-1}{(3u+4v+x-1)}$} \\
(-1, -1, +1, 0, +1) & \text{with probability $\frac{x_2}{(3u+4v+x-1)}$} \\
(-1, 0, +1, +1, 0) & \text{with probability $\frac{3u}{2(3u+4v + x-1)(3u + 4v + x-3)}$} \\
(-1,0, +1, 0, 0) & \text{with probability $\frac{4v}{2(3u+4v+x-1)} + \frac{3u(3u+4v+x-4)}{2(3u+4v + x-1)(3u + 4v + x-3)}$} \\
(+1,0, +1, 0, 0) & \text{with probability $\frac{3u}{2(3u+4v+x-1)}$} \\
(+1,+1,+1, 0, 0) & \text{with probability $\frac{4v}{2(3u+4v+x-1)}$}. \\
\end{cases}
\end{aligned}
\]
Conditional on $X_1^n(i) = 0, X_2^n(i) = x_2 > 0, Y^n(i) = y \ge 3, U^n(i) = u, V^n(i) = v$,
\[
\begin{aligned}
& (X_1^n(i+1) - X_1^n(i), X_2^n(i+1) - X_2(i), Y^n(i+1) - Y^n(i), N^n_1(i+1) -N^n_1(i),  N^n_2(i+1) -N^n_2(i)) \\
& \qquad \qquad = \begin{cases} 
(0, -2, -1, 0, +1) & \text{with probability $\frac{x_2-1}{2(3u+4v+x_2-1)}$} \\
(0,-2, +1, 0, +1) & \text{with probability $\frac{x_2-1}{2(3u+4v+x_2-1)}$} \\
(0, -1, -1, +1, 0) & \text{with probability $\frac{3u}{2(3u+4v + x_2-1)(3u + 4v + x_2-3)}$} \\
(0,-1, -1, 0, 0) & \text{with probability $\frac{4v}{2(3u+4v+x_2-1)} + \frac{3u(3u+4v+x_2-4)}{2(3u+4v + x_2-1)(3u + 4v + x_2-3)}$} \\
(+2,-1, +1, 0, 0) & \text{with probability $\frac{3u}{2(3u+4v+x_2-1)}$} \\
(+2,0,+1,0, 0) & \text{with probability $\frac{4v}{2(3u+4v+x_2-1)}$}. 
\end{cases}
\end{aligned}
\]
Conditional on $X_1^n(i) =  0, X_2^n(i) = x_2 > 0, Y^n(i) = y =2, U^n(i) = u, V^n(i) = v$,
\[
\begin{aligned}
& (X_1^n(i+1) - X_1^n(i), \ X_2^n(i+1) - X_2(i), \ Y^n(i+1) - Y^n(i), N^n_1(i+1) -N^n_1(i),  N^n_2(i+1) -N^n_2(i)) \\
& \qquad \qquad = \begin{cases} 
(0,-2, +1, 0, +1) & \text{with probability $\frac{x_2-1}{(3u+4v+x_2-1)}$} \\
(0, -1, +1, +1, 0) & \text{with probability $\frac{3u}{2(3u+4v + x_2-1)(3u + 4v + x_2-3)}$} \\
(0,-1, +1, 0, 0) & \text{with probability $\frac{4v}{2(3u+4v+x_2-1)} + \frac{3u(3u+4v+x_2-4)}{2(3u+4v + x_2-1)(3u + 4v + x_2-3)}$} \\
(+2,-1, +1, 0, 0) & \text{with probability $\frac{3u}{2(3u+4v+x_2-1)}$} \\
(+2,0,+1,0, 0) & \text{with probability $\frac{4v}{2(3u+4v+x_2-1)}$}. 
\end{cases}
\end{aligned}
\]
Conditional on $X_1^n(i) =  0, X_2^n(i) = 0, Y^n(i) = y \ge 3, U^n(i) = u, V^n(i) = v$,
\[
\begin{aligned}
& (X_1^n(i+1) - X_1^n(i), X_2^n(i+1) - X_2(i), Y^n(i+1) - Y^n(i), N^n_1(i+1) -N^n_1(i),  N^n_2(i+1) -N^n_2(i)) \\
& \qquad \qquad = \begin{cases} 
(+1,0, -1,0,0) & \text{with probability $\frac{1}{2}$} \\
(+3,0, +1,0,0) & \text{with probability $\frac{3u}{2(3u+4v)}$} \\
(+3,+1,+1,0,0) & \text{with probability $\frac{4v}{2(3u+4v)}$}. \\
\end{cases}
\end{aligned}
\]
Finally, conditional on $X_1^n(i) =  0, X_2^n(i) = 0, Y^n(i) = y = 2, U^n(i) = u, V^n(i) = v$,
\[
\begin{aligned}
& (X_1^n(i+1) - X_1^n(i), X_2^n(i+1) - X_2(i), Y^n(i+1) - Y^n(i), N^n_1(i+1) -N^n_1(i),  N^n_2(i+1) -N^n_2(i)) \\
& \qquad \qquad = \begin{cases} 
(+1,0, +1,0,0) & \text{with probability $\frac{1}{2}$} \\
(+3,0, +1,0,0) & \text{with probability $\frac{3u}{2(3u+4v)}$} \\
(+3,+1,+1,0,0) & \text{with probability $\frac{4v}{2(3u+4v)}$}. \\
\end{cases}
\end{aligned}
\]
By construction, $Y^n$ performs a  SSRW with upward reflection at 2.  For fixed $i \ge 0$, as long as $Y^n(i) \ge 2$ and $X_1^n(i) \le Y^n(i)$, we obtain $X_1^n(j) \le Y^n(j)$ for all $j \ge i$.

Finally, we observe that since a vertex of degree 4 contributes to the size of the second stack at most once, $X^n_2(i) \le V^n(0) - V^n(i)$ for all $i \ge 0$. Similarly, the number of clashes involving at least one half-edge from the second stack is bounded above by the number of vertices of degree 4 processed so far, $N_2^n(i) \le V^n(0) - V^n(i)$.

Henceforth, we will use the notation $(\mathcal{F}_i^n)_{i \ge 0}$ for the natural filtration of the process
\[
(X^n_1(i), X^n_2(i), Y^n(i), U^n(i), V^n(i), N^n_1(i), N^n_2(i))_{i \ge 0}.
\]
For future reference, we note that
\begin{equation} \label{eqn:X_1}
\begin{aligned}
\E{X_1^n(i+1) - X_1^n(i) |  \mathcal{F}^n_i} 
& = \I{X^n_1(i) = 0, X^n_2(i) > 0} + 2 \I{X_1^n(i) = 0, X^n_2(i) = 0} \\
& \qquad \qquad  - \frac{2(X_1^n(i) - \I{X_1^n(i) > 0}) + (X_2^n(i) - \I{X_1^n(i) = 0, X_2^n(i) > 0})}{3U^n(i) + 4 V^n(i) + X^n(i) -  \I{X^n(i) > 0}}
\end{aligned}
\end{equation}
and
\begin{equation} \label{eqn:N_1}
\begin{aligned}
 \E{N_1^n(i+1) - N_1^n(i) |  \mathcal{F}^n_i} 
& = \tfrac{3U^n(i) + 2(X_1^n(i) - \I{X_1^n(i) > 0})(3U^n(i) + 4V^n(i) + X^n(i) - 3)}{2(3U^n(i) + 4V^n(i) + X^n(i) -\I{X^n(i) > 0})(3U^n(i) + 4V^n(i) + X^n(i) -\I{X^n(i) > 0} - 2)}.
\end{aligned}
\end{equation}

As a first consequence of our coupling, we show that $X^n$ varies on a smaller scale than $n$.

\begin{lem} \label{Lem X is little o of n}
We have that $\frac{1}{\alpha(n) \vee \sqrt{n}} \sup_{0 \le i \le \zeta_n} X^{n}(i)$ is bounded in $L^2$. In particular,
\begin{align*}
\frac{1}{n} \sup_{0 \le i \le \zeta_n} X^{n}(i) \convprob 0,
\end{align*}
as $n \to \infty$.
\end{lem}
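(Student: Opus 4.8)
\emph{Proof proposal.} The plan is to exploit the coupling from Section~\ref{sec:coupling}, which dominates the relevant part of $X^n$ by a reflecting simple symmetric random walk, and to control the latter using Doob's $L^2$ maximal inequality after replacing the random horizon $\zeta_n$ by the deterministic bound (\ref{eqn:crudebound}).

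Recall from Section~\ref{sec:coupling} that $X^n(i) = X_1^n(i) + X_2^n(i)$, that $X_2^n(i) \le V^n(0) - V^n(i) \le \alpha(n)$ for all $i$, and that the coupling produces a process $Y^n$ performing a simple symmetric random walk with upward reflection at $2$, started from $Y^n(0) = 2$, with $X_1^n(i) \le Y^n(i)$ for all $0 \le i \le \zeta_n$ (the domination hypothesis $Y^n(0) \ge 2$, $X_1^n(0) \le Y^n(0)$ holds trivially at $i = 0$ since $Y^n(0) = 2$ and $X_1^n(0) = 0$). Hence $\sup_{0 \le i \le \zeta_n} X^n(i) \le \alpha(n) + \sup_{0 \le i \le \zeta_n} Y^n(i)$. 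By (\ref{eqn:crudebound}), $\zeta_n \le 3n + 4\alpha(n)$ deterministically, so extending $Y^n$ beyond $\zeta_n$ to a reflecting simple symmetric random walk on $\Z_+$ we get $\sup_{0 \le i \le \zeta_n} Y^n(i) \le \sup_{0 \le i \le 3n + 4\alpha(n)} Y^n(i)$. A simple symmetric random walk reflected (upwards) at $2$ and started from $2$ is distributed, as a process, as $2 + |W|$, where $W$ is a simple symmetric random walk started from $0$; since $W$ is a martingale, Doob's $L^2$ maximal inequality gives $\E{(\sup_{0 \le i \le m} |W_i|)^2} \le 4\E{W_m^2} = 4m$. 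Taking $m = 3n + 4\alpha(n)$ and applying Minkowski's inequality,
\[
\Big\| \sup_{0 \le i \le \zeta_n} X^n(i) \Big\|_2 \le \alpha(n) + 2 + 2\sqrt{3n + 4\alpha(n)}.
\]

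Dividing through by $\alpha(n) \vee \sqrt{n}$, each term is bounded uniformly in $n$: indeed $\alpha(n)/(\alpha(n) \vee \sqrt{n}) \le 1$, $2/(\alpha(n) \vee \sqrt{n}) \le 2$, and $\sqrt{3n + 4\alpha(n)}/\sqrt{n} = \sqrt{3 + 4\alpha(n)/n}$ is bounded because $\alpha(n)/n \to 0$. This gives the claimed $L^2$-boundedness of $\frac{1}{\alpha(n) \vee \sqrt{n}} \sup_{0 \le i \le \zeta_n} X^n(i)$. For the last assertion, divide instead by $n$: the same bound yields $\big\| \frac{1}{n} \sup_{0 \le i \le \zeta_n} X^n(i) \big\|_2 \le \frac{\alpha(n) + 2 + 2\sqrt{3n + 4\alpha(n)}}{n} \to 0$, so $\frac{1}{n} \sup_{0 \le i \le \zeta_n} X^n(i) \to 0$ in $L^2$ and hence $\frac{1}{n} \sup_{0 \le i \le \zeta_n} X^n(i) \convprob 0$.

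There is no serious obstacle: the two points requiring a little care are the identification of the reflecting walk with $2 + |W|$ (so that Doob applies to a genuine martingale) and the replacement of the stopping time $\zeta_n$ by the deterministic horizon $3n + 4\alpha(n)$ before invoking the maximal inequality; everything else is assembly of estimates already set up in the coupling.
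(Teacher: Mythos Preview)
Your proof is correct and follows essentially the same route as the paper: the coupling gives $X^n(i)\le \alpha(n)+Y^n(i)$, the deterministic bound $\zeta_n\le 3n+4\alpha(n)$ replaces the random horizon, the identification $Y^n\stackrel{d}{=}2+|W|$ reduces matters to Doob's $L^2$ maximal inequality for the SSRW martingale, and the conclusion follows. The only cosmetic difference is that you package the final bound via Minkowski's inequality in $L^2$, whereas the paper expands the square directly; the content is the same.
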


\begin{proof}
We have $X^n(0) = 0$; let $Y^n(0) = 2$.  Then by the coupling, we have 
\[
\sup_{0 \le i \le \zeta_n} X^n(i) \le  \alpha(n) + \sup_{0 \le i \le \zeta_n} Y^n(i) \le \alpha(n) + \sup_{0 \le i \le 3n+4\alpha(n)} Y^n(i).
\]
Now let $Z$ be a standard SSRW, and note that $Z$ is a martingale.  Then $Y^n$ has the same law as $(2+ |Z(i)|)_{i \ge 0}$.  By Doob's $L^2$ inequality,
\[
\E{\left(\sup_{0 \le i \le 3n+4\alpha(n)} |Z(i)| \right)^2} \le 4 \E{Z(3n+4\alpha(n))^2} = 4(3n + 4\alpha(n)).
\]
Hence,
\begin{align*}
\E{ \left(\sup_{0 \le i \le \zeta_n} X^n(i) \right)^2} & \le \E{\left(\alpha(n) + 2 + \sup_{0 \le i \le 3n + 4\alpha(n)} |Z(i)| \right)^2} \\
&  \le (\alpha(n) + 2)^2 + 4(3n + 4 \alpha(n)) + 4 (\alpha(n) + 2) \sqrt{3n + 4\alpha(n)}.
\end{align*}
It follows that there exists a constant $C > 0$ such that for all $n \ge 1$ we have
\[
\E{\left( \frac{1}{\alpha(n) \vee \sqrt{n}} \sup_{0 \le i \le \zeta_n} X^n(i) \right)^2} \le C.
\]
Since $\alpha(n)/n \to 0$, we have moreover that
\[
\E{\left( \frac{1}{n} \sup_{0 \le i \le \zeta_n}X^n(i) \right)^2} \to 0,
\]
and so $\frac{1}{n} \sup_{0 \le i \le \zeta_n}X^n(i) \convprob 0$.
\end{proof}

%
%
\section{Fluid limit approximations for the auxiliary processes} \label{Sect fluid limit approximations}

In this section we prove that $U^{n}$ and $V^{n}$, after appropriate rescaling, remain concentrated around their expected trajectories. In order to do so, we employ the differential equations method \cite{DarlingNorris, Wormald}. Here we briefly recall the main idea, closely following Darling and Norris~\cite{DarlingNorris}, although our presentation is for discrete-time rather than continuous-time processes and is in a somewhat simplified setting. Suppose that we are given a stochastic process $P^{n}$ evolving in discrete time with finite state-space $\mathcal{S}^n \subset \mathbb{Z}$. We assume that $P^n$ is either itself Markov or is a co-ordinate of some higher-dimensional Markov process $\mathbf{P}^n$. Let the natural filtration of $\mathbf{P}^n$ be denoted by $(\mathcal{F}_{\mathbf{P}}^{n}(i))_{i \ge 0}$. For each $i\geq{0}$, the process $(M^n_P(i))_{i \ge 0}$ defined by
\begin{equation*}
M^{n}_{P}(i)=P^{n}(i) -P^{n}(0) -\sum_{j=0}^{i-1}\E{P^{n}(j+1)-P^{n}(j)\vert{\mathcal{F}^{n}_{\mathbf{P}}(j)}}
\end{equation*} 
is a martingale. Then, for each fixed $t>0$, we have
\begin{equation*}
\frac{P^{n}(\fl{nt})}{n}=\frac{P^{n}(0)}{n}+\frac{M^{n}_{P}(\fl{nt})}{n}+\sum_{j=0}^{\fl{nt}-1}{\frac{\E{P^{n}(j+1)-P^{n}(j)\vert{{\mathcal{F}^{n}_{\mathbf{P}}(j)}}}}{n}}.
\end{equation*}
Fix $t_0 > 0$ and let $\mathcal{U} \subseteq \R$. We will make four assumptions.
\begin{enumerate}
\item[(1)] For some constant $p(0) \in \mathcal{U}$ we have $\left\vert{\frac{P^{n}(0)}{n}-p(0)}\right\vert \convprob{0}$.
\item[(2)] Suppose that $\nu: [0,t_0] \times \mathcal{U} \to{\mathbb{R}}$ is a bi-Lipschitz function with Lipschitz constant $K$.  Suppose that $p$, the unique solution to the differential equation
\begin{equation*}
\frac{dp(t)}{dt}=\nu(t, p(t))
\end{equation*}
with initial condition $p(0)$, is such that, for some $\epsilon > 0$, $p(t)$ lies at distance greater than $\epsilon$ from $\mathcal{U}^c$ for all $0 \le t \le t_0$.
\item[(3)] Let $T_n = \inf\{t \ge 0: P^n(\fl{nt})/n \notin \mathcal{U}\}$.  We assume that 
\begin{align*}
\sup_{0 \le t \le t_0 \wedge T_n} \left\vert\sum_{j=0}^{\fl{nt}-1}{\frac{\E{P^{n}(j+1)-P^{n}(j)\vert{{\mathcal{F}^{n}_{P}(j)}}}}{n}-\int_{0}^{t}{\nu \left(s, \frac{P^{n}(\fl{ns})}{n}\right)}ds}\right\vert \convprob 0.
\end{align*} 
\item[(4)] We have 
\[
\frac{\E{M^n_P(\fl{n(t_0 \wedge T_n)})^2}}{n^2} \to 0
\]
as $n \to \infty$.
\end{enumerate}

Assumption (1) tells us that the initial condition is well-concentrated on the scale of $n$.  Assumption (3) gives that, conditionally on $P^n$ being in state $\fl{nx}$ at time $\fl{ns}$, the size of the expected increment of $P^{n}$ is close to $\nu(s,x)$. It is then natural to compare $P^{n}/n$ to the solution to the differential equation in (2), as long as $P^n/n$ remains within the set $\mathcal{U}$. 

\begin{prop} \label{prop:fluidlimit}
Under assumptions (1), (2),  (3) and (4), we have
\[
\sup_{0\leq t \leq T}\left|{\frac{P^{n}(\fl{nt})}{n}-p(t)}\right| \convprob 0
\]
as $n \to \infty$.
\end{prop}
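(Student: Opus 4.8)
The statement is the standard differential-equations-method conclusion (à la Darling–Norris / Wormald), so the proof will be a Grönwall-type argument run until the first exit time. Let me set up notation matching the hypotheses: write $\bar P^n(t) = P^n(\fl{nt})/n$, let $\mu(t) = \sum_{j=0}^{\fl{nt}-1} \E{P^n(j+1)-P^n(j) \mid \F^n_P(j)}/n$ be the compensator term, and recall the martingale decomposition
\[
\bar P^n(t) = \frac{P^n(0)}{n} + \frac{M^n_P(\fl{nt})}{n} + \mu(t).
\]
The target function satisfies $p(t) = p(0) + \int_0^t \nu(s,p(s))\,ds$. Fix $t_0>0$; I will prove the convergence with $T = t_0 \wedge T_n$ (or rather, show $T_n > t_0$ with high probability and conclude, depending on how the authors intend $T$ — but the cleanest route is to stop at $t_0 \wedge T_n$ throughout and only at the end argue the exit time is not binding). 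Subtract the two identities and estimate, for $t \le t_0 \wedge T_n$,
\[
\bar P^n(t) - p(t) = \left(\frac{P^n(0)}{n} - p(0)\right) + \frac{M^n_P(\fl{nt})}{n} + \left(\mu(t) - \int_0^t \nu(s,\bar P^n(s))\,ds\right) + \int_0^t \left(\nu(s,\bar P^n(s)) - \nu(s,p(s))\right) ds.
\]

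**The main steps.** First, bound the three "error" terms uniformly over $0 \le t \le t_0 \wedge T_n$ and show each goes to $0$ in probability: term one by Assumption (1); term three by Assumption (3) directly; term two by noting $(M^n_P(i))$ is a martingale, applying Doob's $L^2$-inequality to get $\E{\sup_{i \le \fl{n(t_0\wedge T_n)}} M^n_P(i)^2} \le 4\,\E{M^n_P(\fl{n(t_0\wedge T_n)})^2} = o(n^2)$ by Assumption (4), hence $\sup_t |M^n_P(\fl{nt})|/n \convprob 0$ (a subtlety: $t_0 \wedge T_n$ is a stopping time for the discrete filtration up to rescaling, so the optional-stopping/Doob inequality applies to the stopped martingale). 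Call the supremum of the sum of these three terms $\Delta_n$, so $\Delta_n \convprob 0$. Second, for the remaining integral term use the Lipschitz bound from Assumption (2): $|\nu(s,\bar P^n(s)) - \nu(s,p(s))| \le K|\bar P^n(s) - p(s)|$ for $s \le t_0 \wedge T_n$ (valid because both arguments lie in $\mathcal U$, $p$ by hypothesis and $\bar P^n$ by definition of $T_n$). This gives
\[
\sup_{0 \le s \le t} |\bar P^n(s) - p(s)| \le \Delta_n + K \int_0^t \sup_{0 \le r \le s}|\bar P^n(r) - p(r)|\,ds \quad \text{for } t \le t_0 \wedge T_n,
\]
and Grönwall yields $\sup_{0 \le t \le t_0 \wedge T_n} |\bar P^n(t) - p(t)| \le \Delta_n e^{K t_0} \convprob 0$. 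Third, upgrade $t_0 \wedge T_n$ to $t_0$: on the event $\{\Delta_n e^{Kt_0} < \epsilon\}$ (high probability), $\bar P^n$ stays within $\epsilon$ of $p$ on $[0, t_0 \wedge T_n]$, and since $p$ itself stays at distance $>\epsilon$ from $\mathcal U^c$ (Assumption (2)), $\bar P^n(t)$ cannot exit $\mathcal U$ before $t_0$; hence $T_n > t_0$ on this event, so the stopping is not binding and $\sup_{0 \le t \le t_0}|\bar P^n(t) - p(t)| \convprob 0$, which is the claim (with $T = t_0$).

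**The main obstacle.** The genuinely delicate point is handling the stopping time $T_n$ correctly: one must ensure that the martingale estimate (Doob) and the Lipschitz estimate are both only invoked on the stochastic interval $[0, t_0 \wedge T_n]$ where $\bar P^n \in \mathcal U$, and then close the loop by showing $T_n$ doesn't actually activate — this circularity (we need $\bar P^n$ close to $p$ to conclude $T_n > t_0$, but we only proved closeness up to $T_n$) is resolved cleanly by the observation that $\sup_{[0,t_0\wedge T_n]}|\bar P^n - p| < \epsilon$ combined with $\dist(p(t),\mathcal U^c) > \epsilon$ forces $T_n > t_0$, so there is no real circularity, just a need to phrase it carefully. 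A minor technical care point is that $T_n$ as defined ($\inf\{t: \bar P^n(t) \notin \mathcal U\}$) lands on a value in $\frac{1}{n}\Z$, and the natural discrete stopping time is $\tau_n = \inf\{j: P^n(j)/n \notin \mathcal U\} = n T_n$; Doob's inequality should be applied to the martingale $M^n_P$ stopped at $\tau_n \wedge \fl{nt_0}$, which is legitimate since $\tau_n \wedge \fl{nt_0}$ is a bounded $(\F^n_P(i))$-stopping time. Everything else is routine.
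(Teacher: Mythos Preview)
Your proposal is correct and follows essentially the same route as the paper's proof: write the difference $\bar P^n(t) - p(t)$ via the martingale decomposition and the integral form of $p$, control the initial-condition, martingale, and drift-approximation errors by Assumptions (1), (4) (via Doob's $L^2$ inequality), and (3) respectively, then apply the Lipschitz bound from Assumption (2) and Gr\"onwall on $[0, t_0 \wedge T_n]$. The only difference is that you spell out the final ``upgrade'' step from $t_0 \wedge T_n$ to $t_0$ (using the $\epsilon$-margin of $p$ from $\mathcal U^c$ to force $T_n > t_0$ on the high-probability event), whereas the paper compresses this into ``The result follows.''
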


\begin{proof}
Since $p$ solves the given differential equation, we may write it in integral form as
\[
p(t) = p(0) + \int_0^t \nu(s, p(s)) ds.
\]
Hence,
\begin{align*}
& \frac{P^n(\fl{nt})}{n} - p(t) \\
& \qquad = \frac{P^{n}(0)}{n} - p(0) +\frac{M^{n}_{P}(\fl{nt})}{n} + \sum_{j=0}^{\fl{nt}-1}{\frac{\E{P^{n}(j+1)-P^{n}(j)\vert{{\mathcal{F}^{n}_{P}(j)}}}}{n}} - \int_0^t \nu(s,p(s)) ds.
\end{align*}
By Doob's $L^2$ inequality, assumption (4) implies that $\sup_{0 \le t_0 \wedge T_n}\frac{|M^{n}_{P}(i)|}{n} \convprob 0$ as $n\to\infty$.  Fix $\delta > 0$ and let
\begin{align*}
\Omega_{n,\delta} & = \left\{ \left\vert{\frac{P^{n}(0)}{n}-p(0)}\right\vert \le \frac{\delta e^{-Kt_0}}{3} \right\} \cap \left\{ \sup_{0\le t \le t_0 \wedge T_n} \frac{|M^{n}_{P}(\fl{nt})|}{n} \le \frac{\delta e^{-Kt_0}}{3} \right\} \\
& \qquad \cap \left\{ \sup_{0 \le t \le t_0 \wedge T_n} \left\vert\sum_{j=0}^{\fl{nt}-1}{\frac{\E{P^{n}(j+1)-P^{n}(j)\vert{{\mathcal{F}^{n}_{P}(j)}}}}{n}-\int_{0}^{t}{\nu \left(s, \frac{P^{n}(\fl{ns})}{n}\right)}ds}\right\vert \le \frac{\delta e^{-Kt_0}}{3} \right\}.
\end{align*}
By the assumptions, we have $\Prob{\Omega_{n,\delta}^c} \to 0$ as $n \to \infty$. On the event $\Omega_{n,\delta}$, we have that for $0 \le t \le t_0 \wedge T_n$,
\begin{align*}
\left| \frac{P^{n}(\fl{nt})}{n}-p(t) \right| & \le \delta e^{-Kt_0} + \int_0^t \left| \nu \left(\frac{P^{n}(\fl{ns})}{n}\right) ds - \nu(p(s))\right| ds \\
& \leq \delta e^{-Kt_0} + K \int_{0}^{t} \left|\frac{P^{n}(\fl{ns})}{n}-p(s)\right| ds ,
\end{align*}
by the Lipschitz property of $\nu$ on $\mathcal{U}$. Hence, by Gronwall's lemma, on the event $\Omega_{n,\delta}$,
\[
\sup_{0 \le t \le t_0 \wedge T_n} \left| \frac{P^{n}(\fl{nt})}{n}-p(t) \right|  \le \delta.
\]
The result follows.
\end{proof}

We begin with a preparatory lemma. 
\begin{lem} \label{Lem Expected increments with nice errors}
For $t<1$ and $0\leq{i}\leq{\lfloor{nt}\rfloor}$ we have
\begin{align*}
\E{U^n(i+1) - U^n(i) | \mathcal{F}^n_i} 
& = -1+E_{U,1}^n(i)\\
\E{(U^n(i+1) - U^n(i))^2 | \mathcal{F}^n_i} 
& =1+E_{U,2}^n(i)\\
\E{V^n(i+1) - V^n(i) | \mathcal{F}^n_i}
& = \frac{-4V^n(i)}{3U^n(i)}+E_{V}^{n}(i)\\
\E{(V^n(i+1) - V^n(i))^2 | \mathcal{F}^n_i}
& = \frac{4V^n(i)}{3U^n(i)}-E_{V}^{n}(i)
\end{align*}
where $\sup_{0\leq{i}\leq{\lfloor{nt}\rfloor}}|E_{U,1}^n(i)|$, $\sup_{0\leq{i}\leq{\lfloor{nt}\rfloor}}|E_{U,2}^n(i)|$, and $\frac{n}{\alpha(n)} \sup_{0\leq{i}\leq{\lfloor{nt}\rfloor}}|E_{V}^{n}(i)|$ all converge to zero in $L^{2}$ 
as $n\to\infty$.
\end{lem}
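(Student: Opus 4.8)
The plan is to start from the exact expressions for $\E{U^n(i+1)-U^n(i)\mid\F^n_i}$, $\E{V^n(i+1)-V^n(i)\mid\F^n_i}$ and the two corresponding second moments recorded just before the statement, and in each case simply to subtract off the claimed leading term so as to read off the error. Writing $D^n(i)=3U^n(i)+4V^n(i)+X^n(i)-\I{X^n(i)>0}$ for the common denominator, a one-line computation gives
\[
E^n_{U,1}(i)=\frac{4V^n(i)-3U^n(i)}{D^n(i)}+1=\frac{8V^n(i)+X^n(i)-\I{X^n(i)>0}}{D^n(i)},
\]
\[
E^n_{U,2}(i)=\frac{3U^n(i)+2V^n(i)}{D^n(i)}-1=\frac{-2V^n(i)-X^n(i)+\I{X^n(i)>0}}{D^n(i)},
\]
and, for \emph{both} $V^n$-lines at once (with signs exactly as written),
\[
E^n_{V}(i)=\frac{4V^n(i)}{3U^n(i)}-\frac{4V^n(i)}{D^n(i)}=\frac{4V^n(i)\left(4V^n(i)+X^n(i)-\I{X^n(i)>0}\right)}{3U^n(i)\,D^n(i)}.
\]
The two $V^n$ identities are automatically compatible because the increments of $V^n$ take values in $\{-1,0\}$, so $\E{(V^n(i+1)-V^n(i))^2\mid\F^n_i}=-\E{V^n(i+1)-V^n(i)\mid\F^n_i}$. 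In each case the error is a ratio whose numerator involves only $V^n(i)$ and $X^n(i)$ with bounded coefficients and whose denominator is at least $3U^n(i)$; so the whole lemma reduces to a lower bound on $U^n(i)$.

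The one thing that genuinely needs an argument — and which I regard as the key step — is the (deterministic) bound $U^n(i)\ge n(1-t)$ for all $0\le i\le\fl{nt}$, valid whenever $t<1$. This follows from two elementary observations about the transition table: the increment of $U^n+V^n$ at each step is $0$ or $-1$ (the only moves not decreasing $U^n+V^n$ are a clash with a burning half-edge and the conversion of a degree-$4$ vertex to a degree-$3$ one), so $U^n(i)+V^n(i)\ge n+\alpha(n)-i\ge n+\alpha(n)-\fl{nt}$; and $V^n$ is non-increasing, so $V^n(i)\le\alpha(n)$. Subtracting gives $U^n(i)\ge n-\fl{nt}\ge n(1-t)$. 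The same inequality applied up to $\zeta_n$ shows $\zeta_n\ge n+\alpha(n)>\fl{nt}$, so the process is genuinely still running on the range in question and all the conditional expectations above are well defined. Since $X^n(i)-\I{X^n(i)>0}\ge0$, we get $D^n(i)\ge 3U^n(i)\ge 3n(1-t)$, whence, uniformly over $0\le i\le\fl{nt}$,
\[
|E^n_{U,1}(i)|,\ |E^n_{U,2}(i)|\ \le\ \frac{8V^n(i)+X^n(i)}{3n(1-t)}\ \le\ \frac{8\alpha(n)+\sup_{0\le j\le\zeta_n}X^n(j)}{3n(1-t)},
\]
\[
\frac{n}{\alpha(n)}\,|E^n_V(i)|\ \le\ \frac{n}{\alpha(n)}\cdot\frac{4\alpha(n)\left(4\alpha(n)+X^n(i)\right)}{9n^2(1-t)^2}\ \le\ \frac{4\left(4\alpha(n)+\sup_{0\le j\le\zeta_n}X^n(j)\right)}{9n(1-t)^2}.
\]

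It then remains to take $L^2$ norms and let $n\to\infty$. By Lemma~\ref{Lem X is little o of n} there is a constant $C$ with $\bigl\|\sup_{0\le j\le\zeta_n}X^n(j)\bigr\|_{L^2}\le C(\alpha(n)\vee\sqrt n)$, so by the triangle inequality the $L^2$ norms of $\sup_{i\le\fl{nt}}|E^n_{U,1}(i)|$, $\sup_{i\le\fl{nt}}|E^n_{U,2}(i)|$ and $\tfrac{n}{\alpha(n)}\sup_{i\le\fl{nt}}|E^n_V(i)|$ are bounded by $\tfrac{8\alpha(n)+C(\alpha(n)\vee\sqrt n)}{3n(1-t)}$, the same quantity, and $\tfrac{4(4\alpha(n)+C(\alpha(n)\vee\sqrt n))}{9n(1-t)^2}$ respectively, all of which tend to $0$ since $\alpha(n)/n\to0$ and $\sqrt n/n\to0$. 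This is exactly the assertion. The structural point to emphasise is that the lower bound $U^n(i)\ge n(1-t)$, and with it the control of $D^n(i)$, degenerates as $t\uparrow1$; this is precisely why this lemma, and the fluid-limit arguments built on it, are confined to intervals $[0,t_0]$ with $t_0<1$, the final portion of the process being handled separately in Section~\ref{sec:end}.
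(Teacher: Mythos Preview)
Your proof is correct and follows essentially the same route as the paper's: identify the error terms by subtracting the leading contribution from the exact conditional moments, bound the denominator via $D^n(i)\ge 3U^n(i)\ge 3(n-i)\ge 3n(1-t)$, bound $V^n(i)\le\alpha(n)$, and invoke Lemma~\ref{Lem X is little o of n} for the $L^2$ control of $\sup X^n$. The only cosmetic difference is that the paper obtains $U^n(i)\ge n-i$ directly from the fact that $U^n$ decreases by at most $1$ per step, whereas you route through $U^n+V^n$; both arguments are equally elementary.
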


\begin{proof}
Let us note that 
\[
\E{U^n(i+1) - U^n(i) | \mathcal{F}_i^n} = -1 + \frac{8 V^n(i) + X^n(i) - \I{X^n(i) > 0}}{3 U^n(i) + 4 V^n(i) + X^n(i) - \I{X^n(i) > 0}}.
\]
Having in mind that in each step we remove at most one vertex, for $0\leq{i}\leq{\fl{nt}}$ we have $U^{n}(i)\geq{n-i}$, and so
\[
\sup_{0 \le i \le \fl{nt}} \frac{8 V^n(i) + X^n(i) - \I{X^n(i) > 0}}{3 U^n(i) + 4 V^n(i) + X^n(i) - \I{X^n(i) > 0}} \le \sup_{0 \le i \le \fl{nt}} \frac{8 \alpha(n) + X^n(i)}{3(n-i)},
\]
which converges to 0 in $L^{2}$ as $n \to \infty$ by Lemma~\ref{Lem X is little o of n}. 
Also,
\[
\E{(U^n(i+1) - U^n(i))^2 | \mathcal{F}_i^n} = 1 - \frac{6 V^n(i) + X^n(i) - \I{X^n(i) > 0}}{3 U^n(i) + 4 V^n(i) + X^n(i) - \I{X^n(i) > 0}}
\]
and
\[
\sup_{0 \le i \le \fl{nt}} \frac{6 V^n(i) + X^n(i) - \I{X^n(i) > 0}}{3 U^n(i) + 4 V^n(i) + X^n(i) - \I{X^n(i) > 0}} \le \sup_{0 \le i \le \fl{nt}} \frac{6 \alpha(n) + X^n(i)}{3(n-i)},
\]
which again by Lemma~\ref{Lem X is little o of n} converges to 0 in $L^{2}$ as $n \to \infty$.

Turning now to $V^{n}$, we see that
\begin{align*}
\E{V^n(i+1) - V^n(i) | \mathcal{F}_i^n} & = - \frac{4 V^n(i)}{3U^n(i)} + \frac{4 V^n(i)}{3U^n(i)} \cdot{\frac{4 V^n(i)+X^n(i)-\I{X^n(i)>0}}{3U^n(i)+4V^n(i)+X^n(i)-\I{X^n(i)>0}}},\\
\E{(V^n(i+1) - V^n(i))^2 | \mathcal{F}_i^n} & =  \frac{4 V^n(i)}{3U^n(i)} -\frac{4 V^n(i)}{3U^n(i)} \cdot{\frac{4 V^n(i)+X^n(i)-\I{X^n(i)>0}}{3U^n(i)+4V^n(i)+X^n(i)-\I{X^n(i)>0}}}.
\end{align*}

Letting $E_{V}^{n}(i)=\tfrac{4 V^n(i)}{3U^n(i)} \cdot{\frac{4 V^n(i)+X^n(i)-\I{+X^n(i)>0}}{3U^n(i)+4V^n(i)+X^n(i)-\I{+X^n(i)>0}}}$, we have
\[
\sup_{0 \le i \le \fl{nt}} |E_{V}^n(i)| \le \frac{4\alpha(n)}{3(n-i)} \frac{4 \sup_{0 \le i \le \zeta_n} (X^n(i) + V^n(i))}{3(n-i)}
\]
and so Lemma \ref{Lem X is little o of n} implies that $\sup_{0\leq{i}\leq{\lfloor{nt}\rfloor}}\frac{n}{\alpha(n)}|E_{V}^n(i)| \to 0$ in $L^{2}$ as $n \to \infty$.
\end{proof}

\begin{lem} \label{Lem V3 fluid limit}
Let $t < 1$. Then as $n \to \infty$, we have
\begin{align*}
\sup_{0 \le i \le \fl{nt}} \left|\frac{U^n(i)}{n} -\left(1-\frac{i}{n}\right) \right| & \convprob 0.
\intertext{If, in addition, we assume that $\alpha(n) \to \infty$ as $n \to \infty$, then}
\sup_{0 \le i \le  \fl{nt}} \left| \frac{V^n(i)}{\alpha(n)} - \left(1-\frac{i}{n}\right)^{4/3} \right| & \convprob 0.
\end{align*}
\end{lem}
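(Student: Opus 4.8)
The plan is to derive both statements from the differential-equations method packaged in Proposition~\ref{prop:fluidlimit}, with inputs the increment expansions of Lemma~\ref{Lem Expected increments with nice errors} and the bound of Lemma~\ref{Lem X is little o of n}. Fix $t<1$ throughout; since $U^n$ changes by at most $1$ per step, $U^n(i)\ge n-i$, so $\fl{nt}<n\le\zeta_n$ and the chain is genuinely running on $\{0,1,\dots,\fl{nt}\}$.

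For $U^n$, I would apply Proposition~\ref{prop:fluidlimit} with $P^n=U^n$, $\nu(s,p)\equiv-1$ and $\mathcal{U}=((1-t)/2,2)$. The solution of $p'=-1$, $p(0)=1$ is $p(s)=1-s$, which stays at distance $\ge(1-t)/2$ from $\mathcal{U}^c$ on $[0,t]$, so assumptions (1) and (2) hold (the former because $U^n(0)/n=1$). For (3), Lemma~\ref{Lem Expected increments with nice errors} gives $\E{U^n(j+1)-U^n(j)\mid\mathcal{F}^n_j}=-1+E^n_{U,1}(j)$, so the compensator sum up to time $\fl{nr}$ differs from $\int_0^r(-1)\,ds$ by at most $1/n+t\sup_{0\le j\le\fl{nt}}|E^n_{U,1}(j)|$, uniformly in $r\in[0,t]$, which tends to $0$ in probability. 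For (4), the increments of $U^n$ lie in $\{-1,0,1\}$, so $\E{M^n_U(\fl{n(t\wedge T_n)})^2}\le\E{M^n_U(\fl{nt})^2}=\sum_{j=0}^{\fl{nt}-1}\E{\var{U^n(j+1)-U^n(j)\mid\mathcal{F}^n_j}}\le\fl{nt}$, whence $\E{M^n_U(\fl{n(t\wedge T_n)})^2}/n^2\to0$. Proposition~\ref{prop:fluidlimit} then gives $\sup_{0\le i\le\fl{nt}}|U^n(i)/n-(1-i/n)|\convprob0$.

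For $V^n$, which lives on the scale $\alpha(n)=o(n)$, I would apply Proposition~\ref{prop:fluidlimit} not to $V^n$ but to $W^n(i):=nV^n(i)/\alpha(n)$, with $\nu(s,w)=-\tfrac{4w}{3(1-s)}$ (Lipschitz in $w$ with constant $\tfrac{4}{3(1-t)}$ on $[0,t]\times\mathcal{U}$, for $\mathcal{U}$ any bounded open interval containing $[(1-t)^{4/3}/2,2]$). The solution of $w'=-\tfrac{4w}{3(1-s)}$, $w(0)=1$ is $w(s)=(1-s)^{4/3}$, giving (2); (1) holds since $W^n(0)/n=V^n(0)/\alpha(n)=1$. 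For (4), $M^n_W=\tfrac n{\alpha(n)}M^n_V$, and using $V^n\le\alpha(n)$, $U^n(j)\ge n-j$ and the conditional second-moment formula for $V^n$, $\E{M^n_V(\fl{nt})^2}=\sum_{j=0}^{\fl{nt}-1}\E{\var{V^n(j+1)-V^n(j)\mid\mathcal{F}^n_j}}\le\sum_{j=0}^{\fl{nt}-1}\tfrac{4\alpha(n)}{3(n-j)}\le\tfrac{4t\alpha(n)}{3(1-t)}$, so $\E{M^n_W(\fl{n(t\wedge T_n)})^2}/n^2\le\tfrac{4t}{3(1-t)\alpha(n)}\to0$ \emph{precisely because $\alpha(n)\to\infty$} --- the only place the extra hypothesis enters. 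For (3), write $\bar V^n(j):=V^n(j)/\alpha(n)=W^n(j)/n$; Lemma~\ref{Lem Expected increments with nice errors} gives $\tfrac1n\E{W^n(j+1)-W^n(j)\mid\mathcal{F}^n_j}=-\tfrac{4\bar V^n(j)}{3U^n(j)}+\tfrac1{\alpha(n)}E^n_V(j)$, the last term being negligible since $\big|\tfrac1{\alpha(n)}\sum_{j=0}^{\fl{nt}-1}E^n_V(j)\big|\le t\cdot\tfrac n{\alpha(n)}\sup_{0\le j\le\fl{nt}}|E^n_V(j)|\convprob0$. On the event $\{\sup_{0\le i\le\fl{nt}}|U^n(i)/n-(1-i/n)|\le\epsilon_n\}$ (probability $\to1$, $\epsilon_n\convprob0$, from the first part) one has $\big|\tfrac1{U^n(j)}-\tfrac1{n-j}\big|\le\tfrac{\epsilon_n}{n(1-t)^2}$ for $j\le\fl{nt}$, so $\sum_{j=0}^{\fl{nr}-1}\tfrac{4\bar V^n(j)}{3U^n(j)}=\sum_{j=0}^{\fl{nr}-1}\tfrac{4\bar V^n(j)}{3(n-j)}+O(\epsilon_n)$; since $\bar V^n\le1$ and $1-s\ge1-t$, a Riemann-sum comparison then gives $\sum_{j=0}^{\fl{nr}-1}\tfrac{4\bar V^n(j)}{3(n-j)}=\int_0^r\tfrac{4\bar V^n(\fl{ns})}{3(1-s)}\,ds+O(1/n)$, uniformly in $r\in[0,t]$. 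As $\int_0^r\nu(s,W^n(\fl{ns})/n)\,ds=-\int_0^r\tfrac{4\bar V^n(\fl{ns})}{3(1-s)}\,ds$, assumption (3) follows, and Proposition~\ref{prop:fluidlimit} yields $\sup_{0\le i\le\fl{nt}}|V^n(i)/\alpha(n)-(1-i/n)^{4/3}|\convprob0$.

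The main obstacle is assumption (3) in the second part: the drift of $V^n$ is governed by $U^n$, which evolves on scale $n$, while $V^n$ evolves on the strictly smaller scale $\alpha(n)$, so one must propagate the additive, scale-$n$ fluid-limit error for $U^n$ into a multiplicative control on $1/U^n$ that is still accurate at scale $\alpha(n)$, while simultaneously ensuring that the martingale fluctuations of $V^n$, of order $\sqrt{\alpha(n)}$, are $o(\alpha(n))$ --- which is exactly what forces $\alpha(n)\to\infty$. Everything else is the standard bookkeeping of the differential-equations method.
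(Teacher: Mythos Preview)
Your proposal is correct and follows essentially the same approach as the paper: both arguments handle $U^n$ by a direct application of Proposition~\ref{prop:fluidlimit} with $\nu\equiv-1$, and then treat $V^n$ by combining the increment expansion of Lemma~\ref{Lem Expected increments with nice errors}, the already-established fluid limit for $U^n$ (to replace $1/U^n(j)$ by $1/(n-j)$), and the martingale second-moment bound $\E{M_V^n(\fl{nt})^2}=O(\alpha(n))$, which is $o(\alpha(n)^2)$ precisely when $\alpha(n)\to\infty$. The only cosmetic difference is that you rescale to $W^n=nV^n/\alpha(n)$ so as to invoke Proposition~\ref{prop:fluidlimit} formally, whereas the paper works directly with $V^n/\alpha(n)$ and reproduces the Gronwall step from the proof of Proposition~\ref{prop:fluidlimit} by hand.
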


\begin{proof}
Having in mind the size of expected increments given by Lemma \ref{Lem Expected increments with nice errors}, and $U^{n}(0)=n$ and $V^{n}(0)=\alpha(n)$, the candidate for the fluid limit is the solution to the system of differential equations given by
\begin{equation} \label{eq fluid limit proxy u v}
\begin{aligned}
\frac{du(s)}{ds} &= -1,\\
\frac{dv(s)}{ds} &= -{\frac{4v(s)}{3u(s)}},
\end{aligned}
\end{equation}
for $0\leq{s}\leq{t}$ and the initial condition $u(0) = 1$, $v(0) = 1$. The solution is given by $u(s) = 1-s$, $v(s)=(1-s)^{4/3}$. We observe that $u(s)$ and $v(s)$ remain bounded away from 0 for all $0 \le s \le t$.

Let
\[
T^n_{U} = \inf\{s \ge 0: U^n(\fl{ns}) = 0\}.
\]
Since $U^n(i) \ge n-i$, it is clear that $T^n_U \ge 1$ for all $n$ and so, in particular, $T_n^U > t$.
For $i \ge 0$, let
\[
M_U^n(i) = U^n(i) -U^{n}(0) - \sum_{j=0}^{i-1} \E{U^n(j+1)-U^n(j) | \mathcal{F}_j^n},
\]
so that $(M_U^n(i))_{i \ge 0}$ is a martingale.  By Lemma~\ref{Lem Expected increments with nice errors}, we have that for $0 \le s \le t$,
\[
\left| \sum_{j=0}^{\fl{ns}-1} \frac{\E{U^n(j+1)-U^n(j) | \mathcal{F}_j^n}}{n} + \int_0^s du \right| 
\le  \sup_{0\leq{s}\leq{t}}\left| \frac{\fl{ns}}{n}- s \right| + t \sup_{0 \le i \le \fl{nt}} |E^n_{U,1}(i)|  \convprob 0,
\]
as $n \to \infty$. Moreover,
\[
\E{M_U^n(i)^2} = \E{\sum_{j=0}^{i-1} \var{U^n(j+1) - U^n(j) | \mathcal{F}_j^n}} \le i \left(1 + \E{\sup_{0 \le j \le i} |E_{U,2}^n(j)|}\right)
\]
and so by Lemma \ref{Lem Expected increments with nice errors} we have
\begin{equation} \label{eqn:MU bound}
\frac{\E{M_U^n(\fl{nt)})^2}}{n^2} \le \frac{\fl{nt}(1 + \E{\sup_{0 \le i \le \fl{nt}} |E^n_{U,2}(i)|} )}{n^2} \to 0.
\end{equation}
Hence, by Proposition~\ref{prop:fluidlimit},
\[
\sup_{0 \le s \le t} \left|\frac{U^n(\fl{ns})}{n} - u(s)\right| \convprob 0.
\]
We now turn to $V^{n}$. For $i\geq{0}$, let
\[
M_V^n(i) = V^n(i) -V^{n}(0) - \sum_{j=0}^{i-1} \E{V^n(j+1)-V^n(j) | \mathcal{F}_j^n},
\]
be the standard martingale. Let us deal first with its second moment.  We have
\[
\E{M_V^n(i)^2} = \E{\sum_{j=0}^{i-1} \var{V^n(j+1) - V^n(j) | \mathcal{F}_j^n}} \le i \cdot {\sup_{0\leq{j}\leq{i}}{\left({\left|{\frac{4V^{n}(j)}{3U^{n}(j)}}\right|+\left|{E^{n}_{V}(i)}\right|}\right)}}.
\]
Since $\frac{4V^{n}(j)}{3U^{n}(j)}\leq{\frac{4\alpha(n)}{3(n-j)}}$, Lemma \ref{Lem Expected increments with nice errors} gives us
\begin{equation} \label{eqn:MV bound}
 \frac{\E{M_V^n(\fl{nt})^2}}{\alpha(n)^2} \le \frac{\fl{nt}\left(\frac{4\alpha(n)}{3(n-\fl{nt})} + \E{\sup_{0 \le i \le \fl{nt}} |E^n_{V}(i)|} \right)}{\alpha(n)^2} \to 0.
\end{equation}
Turning now to the drift, we have
\begin{align*}
& \sup_{0\leq{s}\leq{t}}\left| \sum_{j=0}^{\fl{ns} - 1} \frac{\E{V^{n}(j+1)-V^{n}(j) |  \mathcal{F}^{n}_j}}{\alpha(n)}+ \int_0^s \frac{4v(r)}{3u(r)} dr \right| \\
& \qquad \le \sup_{0\leq{s}\leq{t}}\left|{\sum_{j=0}^{\fl{ns}-1}{\frac{4V^{n}(i)/\alpha(n)}{3U^{n}(i)}}-\int_{0}^{s}{\frac{4v(r)}{3u(r)}}dr}\right| + \frac{nt}{\alpha(n)} \sup_{0 \le i \le \fl{nt}} |E_V^n(i)| \\
& \qquad \le \sup_{0\leq{s}\leq{t}}\left| \int_{0}^{\fl{ns}/n} \left( \frac{4V^{n}(\fl{nr})/\alpha(n)}{3U^{n}(\fl{nr})/n} - \frac{4v(r)}{3u(r)} \right) dr \right| + \sup_{0 \le s \le t} \left| \int_{\fl{ns}/n}^s{\frac{4v(r)}{3u(r)}}dr \right| \\
& \qquad \qquad + \frac{nt}{\alpha(n)} \sup_{0 \le i \le \fl{nt}} |E_V^n(i)|.
\end{align*}
The penultimate term on the right-hand side clearly tends to 0, and the last term converges to 0 in probability by Lemma~\ref{Lem Expected increments with nice errors}.  We have
\begin{align*}
& \sup_{0\leq{s}\leq{t}}\left| \int_{0}^{\fl{ns}/n} \left( \frac{4V^{n}(\fl{nr})/\alpha(n)}{3U^{n}(\fl{nr})/n} - \frac{4v(r)}{3u(r)} \right) dr \right| \\
& \le \int_{0}^{\fl{nt}/n} \frac{4V^{n}(\fl{ns})/\alpha(n)}{3u(s)} \cdot{\left|{\frac{u(s)}{U^{n}(\fl{ns})/n}}-1\right|}ds
+ \int_{0}^{\fl{nt}/n}\frac{4}{3u(s)} \left|{\frac{V^{n}(\fl{ns})}{\alpha(n)}-v(s)}\right|ds \\
& \le \frac{4}{3(1-t)} \sup_{0 \le s \le t} \left| \frac{u(s)}{U^n(\fl{ns})/n} - 1 \right| + \frac{4}{3(1-t)} \int_0^t \left| {\frac{V^{n}(\fl{ns})}{\alpha(n)}-v(s)}\right|ds.
\end{align*}
The first term on the right-hand side converges to 0 in probability.  So following the argument in the proof of Proposition~\ref{prop:fluidlimit}, we easily obtain
\begin{equation*}
\sup_{0\leq{s}\leq{t}}\left|{\frac{V^{n}(\fl{ns})}{\alpha(n)}-v(s)}\right| \convprob 0. \qedhere
\end{equation*}
\end{proof}

%
%
\section{Fluid limit for $\alpha(n) > > \sqrt{n}$} \label{Sect Fluid limit X N large omega}

In the case where $\alpha(n) > > \sqrt{n}$, we will also prove fluid limits for $X^n$ and $N^n$.  

\begin{thm} \label{thm fluid limit for X for omega>>sqrt n}
Fix $t \in (0,1)$. For $\alpha(n)\gg\sqrt{n}$, as $n\to\infty$ we have
\begin{equation*}
\begin{aligned}
\sup_{0\le s \le t} \left| \frac{X^{n}(\fl{ns})}{\alpha(n)}-\left((1-s)^{2/3}-(1-s)^{4/3}\right) \right| &\convprob 0,\\
\sup_{0\le s \le t} \left|\frac{N^{n}(\fl{ns})}{\alpha(n)}-\left(\frac{1}{4}-\frac{1}{2}(1-s)^{2/3}+\frac{1}{4}(1-s)^{4/3}\right)\right|& \convprob 0, \\
\frac{L^{n}(\fl{nt})}{\alpha(n)}  & \convprob 0.
\end{aligned}
\end{equation*}
\end{thm}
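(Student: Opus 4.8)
The plan is to establish the three displayed convergences in the order: first the local-time bound $L^n(\fl{nt})/\alpha(n) \convprob 0$ (the third display), then the fluid limit for $X^n$ (the first display), and finally the one for $N^n$ (the second); each feeds into the next, and the local-time estimate is the one genuinely new ingredient, the rest being the differential equations method of Section~\ref{Sect fluid limit approximations}.

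\emph{The local time.} I would prove the quantitative bound $\E{L^n(\fl{nt})} = O(\sqrt{n})$, which suffices since $\alpha(n) \gg \sqrt{n}$. As $\E{L^n(\fl{nt})} = 2\sum_{j=0}^{\fl{nt}-1}\Prob{X^n(j)=0}$, the task reduces to the return estimate $\Prob{X^n(j)=0} \le C/\sqrt{j}$, uniformly for $1 \le j \le \fl{nt}$. The mechanism: as long as $j \le \fl{nt}$ and $X^n$ has stayed below $n^{1/3}$, the increments of $X^n$ are bounded by $4$, have non-negative conditional mean (of order $\alpha(n)/n$, by \eqref{eq exp changes X}, Lemma~\ref{Lem X is little o of n} and Lemma~\ref{Lem V3 fluid limit}) and conditional variance bounded below by $1$ (by \eqref{eqn:squareincrX}); so in this range $X^n$ may be coupled with a genuine aperiodic random walk with a uniformly elliptic step law, for which a local-limit (Fourier) bound gives probability $O(1/\sqrt{j})$ of occupying any fixed site at time $j$. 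On the complementary event (that $X^n$ exceeded $n^{1/3}$ before time $j$) one uses instead that, $X^n$ having gone high, the strictly positive drift accumulated afterwards, together with a maximal inequality for the martingale $M^n_X$, makes a return to $0$ by time $j$ improbable. Heuristically the local time lives on the window of the first $O((n/\alpha(n))^2)$ steps, where $X^n$ rises to height of order $n/\alpha(n)$ before being swept off by a drift that accumulates to $\Theta(\alpha(n))$ over $[0,\fl{nt}]$, so in fact $L^n(\fl{nt}) = O_p(n/\alpha(n)) = o(\alpha(n))$. I expect this step to be the main obstacle; everything else is routine.

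\emph{Fluid limit for $X^n$.} Put $g^n(s) = X^n(\fl{ns})/\alpha(n)$ and run the argument of Proposition~\ref{prop:fluidlimit} in the $\alpha(n)$-scaled form (as was done for $V^n$ in Lemma~\ref{Lem V3 fluid limit}). Summing $\alpha(n)^{-1}$ times the conditional increment of $X^n$ from \eqref{eq exp changes X} over $j < \fl{ns}$ produces $L^n(\fl{ns})/\alpha(n)$ — which is $\convprob 0$ by the previous step — plus a term which, after replacing the denominator $3U^n(j)+4V^n(j)+X^n(j)-\I{X^n(j)>0}$ by $(1+o(1))\,3U^n(j)$ via Lemma~\ref{Lem X is little o of n}, and using $U^n(\fl{nr})/n \to 1-r$ and $V^n(\fl{nr})/\alpha(n) \to (1-r)^{4/3}$ from Lemma~\ref{Lem V3 fluid limit}, is uniformly close on $[0,t]$ to $\int_0^s \nu(r,g^n(r))\,dr$ with $\nu(r,z) = \frac{2(1-r)^{4/3}-2z}{3(1-r)}$. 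By \eqref{eqn:squareincrX} the martingale $M^n_X$ satisfies $\E{M^n_X(\fl{nt})^2} = O(n) = o(\alpha(n)^2)$, so $\alpha(n)^{-1}\sup_{j\le\fl{nt}}|M^n_X(j)| \convprob 0$ by Doob. Since $\nu$ is Lipschitz in $z$ uniformly on $[0,t]$ and $x$ solves $x'(s)=\nu(s,x(s))$ with $x(0)=0$ (a direct check, as $x'(s) = -\tfrac23(1-s)^{-1/3}+\tfrac43(1-s)^{1/3}$), Gronwall's lemma gives $\sup_{0\le s\le t}|g^n(s)-x(s)| \convprob 0$.

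\emph{Fluid limit for $N^n$.} Here $N^n$ is non-decreasing with $\{0,1\}$-valued increments, so no Gronwall is needed and uniformity in $s$ is automatic by monotonicity (Dini's theorem); it is enough to control the martingale and the drift. From $\E{(N^n(j+1)-N^n(j))^2 | \mathcal{F}^n_j} = \E{N^n(j+1)-N^n(j)|\mathcal{F}^n_j}$ we get $\E{M^n_N(\fl{nt})^2} = \E{N^n(\fl{nt})} = O(\alpha(n)) = o(\alpha(n)^2)$. For the drift, the conditional-increment formula for $N^n$ in Section~\ref{Sect Markov chain}, together with Lemma~\ref{Lem X is little o of n}, Lemma~\ref{Lem V3 fluid limit} and the convergence $g^n\to x$ just proved, shows that $\alpha(n)^{-1}\sum_{j<\fl{ns}}\E{N^n(j+1)-N^n(j)|\mathcal{F}^n_j}$ is uniformly close on $[0,t]$ to $\frac{1}{6\alpha(n)}\int_0^s\frac{dr}{1-r} + \frac13\int_0^s\frac{x(r)}{1-r}\,dr$; the first term vanishes since $\alpha(n)\to\infty$, and the second equals $m(s)$ by the elementary computation $\int_0^s\frac{x(r)}{1-r}\,dr = \int_0^s\bigl((1-r)^{-1/3}-(1-r)^{1/3}\bigr)\,dr = \tfrac32\bigl(1-(1-s)^{2/3}\bigr)-\tfrac34\bigl(1-(1-s)^{4/3}\bigr)$. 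Together with the $X^n$ convergence and the local-time bound of the first step, this completes the proof.
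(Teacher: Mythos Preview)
Your treatment of the fluid limits for $X^n$ and $N^n$ is essentially the paper's: write the drift in the form of Lemma~\ref{Lem expected increments of X}, control the martingale via Doob and $\E{M^n_X(\fl{nt})^2}=O(n)=o(\alpha(n)^2)$, and apply Gronwall on $[0,t]$. That part is fine.

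The gap is in the local-time step. You claim $\Prob{X^n(j)=0}\le C/\sqrt{j}$ by ``coupling with a genuine aperiodic random walk'' and invoking a local limit bound. But a coupling only transfers \emph{order} information: if $W\le X^n$ then $\{X^n(j)=0\}\subset\{W(j)\le 0\}$, and $\Prob{W(j)\le 0}$ is of constant order while $j\ll (n/\alpha(n))^2$; the pointwise bound $\Prob{W(j)=0}=O(1/\sqrt j)$ for the walk does not transfer to $X^n$. To get a bound on $\Prob{X^n(j)=0}$ one needs a coupling from \emph{below}, $D\le X^n$, so that $\{X^n(j)=0\}\subset\{D(j)=0\}$, and then an argument that $\sum_j\Prob{D(j)=0}=O(\sqrt n)$. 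Your ``complementary event'' clause is also imprecise: once $X^n$ exceeds $V^n$ the drift in \eqref{eq exp changes X} is \emph{negative}, so you cannot simply say ``the strictly positive drift accumulated afterwards'' prevents a return; what is true is that $X^n$ is attracted to the level $V^n$, but turning this into a rigorous exclusion of returns to $0$ requires more than a maximal inequality for $M^n_X$.

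The paper circumvents all of this by a two-stage argument that you may find cleaner. First it fixes a small $t_0\in(0,t)$ and shows only $L^n(\fl{nt_0})/\alpha(n)\convprob 0$: on $[0,\fl{nt_0}]$ one can couple $X^n$ from below by a reflected martingale $D^n$ with unit-order variance increments (this is where one needs $X^n<V^n$, guaranteed by a separate lemma), and then the martingale functional CLT gives $\tilde L^n(\fl{nt_0})/\sqrt n\convdist -2\inf_{[0,t_0]}W$, whence the claim since $\alpha(n)\gg\sqrt n$. Second, one runs the Gronwall argument for $X^n$ not on $[0,t]$ but on $[0,t\wedge T^n]$ with $T^n=\inf\{s\ge t_0:X^n(\fl{ns})=0\}$; on this interval no further local time accrues, so the argument goes through cleanly and yields $\sup_{s\le t\wedge T^n}|X^n(\fl{ns})/\alpha(n)-x(s)|\convprob 0$. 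Since $\inf_{[t_0,t]}x>0$, this forces $T^n>t$ with high probability and hence $L^n(\fl{nt})=L^n(\fl{nt_0})$. This avoids having to estimate $\Prob{X^n(j)=0}$ pointwise.
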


To this end, we again study the expected increments.

\begin{lem} \label{Lem expected increments of X} \label{Lem Exp increments N}
For $t<1$ and $0\leq{i}\leq{\lfloor{nt}\rfloor}$ we have
\begin{align*}
\E{X^n(i+1) - X^n(i) | \mathcal{F}^n_i}
& = 2\I{X^n(i)=0}+\frac{2\alpha(n)(1-i/n)^{4/3}-2X^n(i)}{3(n-i)}+E_{X,1}^{n}(i),\\
\E{(X^n(i+1) - X^n(i))^2 | \mathcal{F}^n_i}
& = 4\I{X^n(i)=0}+1+E_{X,2}^n(i),\\
\E{N^n(i+1) - N^n(i) | \mathcal{F}^n_i}&=\E{(N^n(i+1) - N^n(i))^2 | \mathcal{F}^n_i}=
 \frac{X^n(i)}{3(n-i)}+E_N^{n}(i),
\end{align*}
where $\frac{n}{\alpha(n) \vee \sqrt{n}} \sup_{0\leq{i}\leq{\lfloor{nt}\rfloor}}|E_{X,1}^{n}(i)|\convprob{0}$, and $\sup_{0\leq{i}\leq{\lfloor{nt}\rfloor}}|E_{X,2}^{n}(i)| \to {0}$ and $\frac{n}{\alpha(n) \vee \sqrt{n}} \sup_{0\leq{i}\leq{\lfloor{nt}\rfloor}} |E_N^{n}(i)| \to 0$ in $L^2$ as $n\to\infty$.
\end{lem}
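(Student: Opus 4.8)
The plan is to derive all three identities directly from the \emph{exact} conditional-moment formulas recorded in Section~\ref{Sect Markov chain} (see (\ref{eq exp changes X}) and (\ref{eqn:squareincrX})), \emph{defining} $E^n_{X,1}(i)$, $E^n_{X,2}(i)$, $E^n_N(i)$ to be the discrepancies between those exact expressions and the claimed main terms, and then bounding each discrepancy using three ingredients. These are: (a) the deterministic bound $U^n(i) \ge n - i \ge n(1-t)$ for $0 \le i \le \fl{nt}$ (at most one vertex leaves per step), which forces every denominator of the form $3U^n(i) + 4V^n(i) + X^n(i) - \I{X^n(i)>0}$ to be at least $3(n-i) \ge 3n(1-t)$; (b) the fluid limits $\sup_{i \le \fl{nt}} |U^n(i)/n - (1-i/n)| \convprob 0$ and $\sup_{i \le \fl{nt}} |V^n(i)/\alpha(n) - (1-i/n)^{4/3}| \convprob 0$ from Lemma~\ref{Lem V3 fluid limit}; and (c) Lemma~\ref{Lem X is little o of n}, which bounds $\sup_{0 \le i \le \zeta_n} X^n(i)$ by a constant times $\alpha(n) \vee \sqrt n$ in $L^2$, and (since the proof compares $X^n$ to a reflected SSRW via Doob's $L^p$-inequality) in $L^p$ for every $p$.

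For the first moment of $X^n$, the exact expression is $2\I{X^n(i)=0} + \tfrac{2V^n(i) - 2X^n(i) + 2\I{X^n(i)>0}}{3U^n(i)+4V^n(i)+X^n(i)-\I{X^n(i)>0}}$, and writing $S = 3U^n(i)+4V^n(i)+X^n(i)-\I{X^n(i)>0}$ I would decompose the difference from the claimed main term as $\tfrac{2\I{X^n(i)>0}}{S} + (2V^n(i)-2X^n(i))\bigl(\tfrac1S - \tfrac1{3(n-i)}\bigr) + \tfrac{2(V^n(i) - \alpha(n)(1-i/n)^{4/3})}{3(n-i)}$. The first term is at most $\tfrac{2}{3n(1-t)}$; in the second, $|2V^n(i)-2X^n(i)| = O(\alpha(n)\vee\sqrt n)$ while $|\tfrac1S - \tfrac1{3(n-i)}| \le \tfrac{S - 3(n-i)}{(3n(1-t))^2}$ with $S - 3(n-i) = 3(U^n(i)-(n-i)) + 4V^n(i) + X^n(i) - \I{X^n(i)>0} = o(n)$ in probability by (b), (c); and the third is at most $\tfrac{2\alpha(n)}{3n(1-t)} \sup_{i\le\fl{nt}}|V^n(i)/\alpha(n) - (1-i/n)^{4/3}|$. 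Multiplying through by $\tfrac{n}{\alpha(n)\vee\sqrt n}$ and using $\alpha(n)/n \to 0$, each piece tends to $0$ in probability, giving the claim for $E^n_{X,1}$.

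The second moment of $X^n$ is almost immediate from (a): on $\{X^n(i)=0\}$ the exact value is $5 + \tfrac{14V^n(i)}{3U^n(i)+4V^n(i)} \le 5 + \tfrac{14\alpha(n)}{3n(1-t)}$, and on $\{X^n(i)>0\}$ it is $1 + \tfrac{6V^n(i)+3X^n(i)-3}{3U^n(i)+4V^n(i)+X^n(i)-1} \le 1 + \tfrac{6\alpha(n)+3\sup_i X^n(i)}{3n(1-t)}$, so $|E^n_{X,2}(i)|$ is dominated by $C(\alpha(n)+\sup_{i\le\zeta_n}X^n(i))/n$, whose supremum over $i$ tends to $0$ in $L^2$ by (c) and $\alpha(n)/n\to 0$. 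For $N^n$ the second moment equals the first since increments lie in $\{0,1\}$, so it suffices to treat $\E{N^n(i+1)-N^n(i)\mid\mathcal{F}^n_i}$; a short rearrangement puts the exact expression in the form $\tfrac{X^n(i)-\I{X^n(i)>0}}{S} + \tfrac{3U^n(i)}{2S(S-2)}$, the second summand being $O(1/n)$ deterministically by (a), and the first being compared with $\tfrac{X^n(i)}{3(n-i)}$ exactly as above, the one new feature being a term $X^n(i)\,(S - 3(n-i))/(3(n-i))^2$ in which two fluctuating quantities are multiplied.

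That last term is where the real work lies, because the target for $E^n_N$ (and $E^n_{X,2}$) is convergence in $L^2$, not merely in probability. I would therefore not invoke Lemma~\ref{Lem V3 fluid limit} as a black box but instead record a crude $L^4$ bound $\sup_{i\le\fl{nt}}|U^n(i)-(n-i)| \le \sup_i|M^n_U(i)| + \fl{nt}\sup_i|E^n_{U,1}(i)|$, in which the martingale part is $O(\sqrt n)$ in $L^4$ by the Burkholder--Davis--Gundy inequality (bounded increments) and the drift part is $O(\alpha(n)+\sup_i X^n(i))$, hence $O(\alpha(n)\vee\sqrt n)$ in $L^4$, by Lemma~\ref{Lem Expected increments with nice errors}. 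Combining this with the $L^4$ bound on $\sup_i X^n(i)$ from (c) via Cauchy--Schwarz gives $\|X^n(i)(S-3(n-i))\|_{L^2} = O((\alpha(n)\vee\sqrt n)^2)$, so after multiplication by $\tfrac{n}{\alpha(n)\vee\sqrt n}\cdot\tfrac1{n^2}$ the offending term has $L^2$-norm $O((\alpha(n)\vee\sqrt n)/n) \to 0$; the same $L^4$ control also yields the $L^2$ statement for $E^n_{X,2}$. The only genuinely delicate point is keeping track of the powers of $\alpha(n)\vee\sqrt n$ against powers of $n$ at each step, but $\alpha(n)/n \to 0$ always provides the required slack.
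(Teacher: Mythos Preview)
Your proposal is correct and follows essentially the same route as the paper: the same exact-formula-minus-main-term definition of the errors, the same three ingredients (the deterministic bound $U^n(i)\ge n-i$, the fluid limits of Lemma~\ref{Lem V3 fluid limit}, and the $L^2$ control of $\sup_i X^n(i)$ from Lemma~\ref{Lem X is little o of n}), and the same style of decomposition for each of the three error terms. Your explicit appeal to Burkholder--Davis--Gundy and Cauchy--Schwarz to upgrade the product bound on $E^n_N$ to $L^2$ is a welcome elaboration of a step the paper handles tersely (``Using (\ref{eq the most useful estimate for errors}) and the fact that $\tfrac{1}{\alpha(n)\vee\sqrt n}\sup_i X^n(i)$ is bounded in $L^2$''), but it is not a different argument.
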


\begin{proof}
By (\ref{eq exp changes X}),  we have
\begin{align*}
& \E{X^n(i+1) - X^n(i) | \mathcal{F}_i^n} \\
& = 2\I{X^n(i)=0}+\frac{2\alpha(n)(1-i/n)^{4/3}-2X^n(i)}{3(n-i)}+\frac{2V^n(i)-2\alpha(n)(1-i/n)^{4/3}+ 2\I{X^n(i)>0}}{3(n-i)}\\
& \qquad +\frac{2V^n(i)-2X^n(i)+2\I{X^n(i)>0}}{3(n-i)}\cdot{\left(\left({1+\tfrac{3U^n(i)-3(n-i)+4V^n(i)+X^n(i)-\I{X^n(i)>0}}{3(n-i)}}\right)^{-1}-1\right)}\\
&=: 2\I{X^n(i)=0}+\frac{2\alpha(n)(1-i/n)^{4/3}-2X^n(i)}{3(n-i)}+E_{X,1}^{n}(i).
\end{align*}
For $0\leq{i}\leq{\lfloor{nt}\rfloor}$, we have
\begin{equation} \label{eqn:probto0}
\begin{aligned}
& \frac{n}{\alpha(n)} \sup_{0 \le i \le \fl{nt}} \left| \frac{2V^n(i)-2\alpha(n)(1-i/n)^{4/3}+ 2\I{X^n(i)>0}}{3(n-i)} \right| \\
&\qquad \le \frac{2}{3(1-t)} \sup_{0 \le i \le \fl{nt}} \left| \frac{V^n(i)}{\alpha(n)} - \left(1 - \frac{i}{n}\right)^{4/3} \right| + \frac{2}{3(1-t)\alpha(n)} \convprob 0,
\end{aligned}
\end{equation}
by the fluid limit for $V^n$ in Lemma~\ref{Lem V3 fluid limit}. Since $(1+x)^{-1} \ge 1-x$ for $x > 0$, and $U^n(i) \ge n-i$, we have
\begin{equation} \label{eq the most useful estimate for errors}
\begin{aligned}
& \sup_{0 \le i \le \fl{nt}} \left|{\left({1+\tfrac{3U^n(i)-3(n-i)+4V^n(i)+X^n(i)-\I{X^n(i)>0}}{3(n-i)}}\right)^{-1}-1}\right|\\
&\qquad \leq{ \sup_{0 \le i \le \fl{nt}} \left|{\frac{3U^n(i)-3(n-i)+4V^n(i)+X^n(i)-\I{X^n(i)>0}}{3(n-i)}}\right|}\\
&\qquad \leq{\frac{1}{3(1-t)}} \left(3\sup_{0\le i \le \fl{nt}} \left|{\frac{U^n(i)}{n}- \left( 1-\frac{i}{n} \right) }\right|+\frac{4\alpha(n)}{n}+{{\frac{1+ \sup_{0\le i \le t}{X^n({i})}}{n}}} \right) \convprob 0
\end{aligned}
\end{equation}
by Lemma~\ref{Lem X is little o of n} (which says that $X^n$ is of smaller order than $n$), and the fluid limit for $U^n$ from Lemma \ref{Lem V3 fluid limit}.

By Lemma~\ref{Lem X is little o of n}, we have that
\[
\frac{1}{\alpha(n) \vee \sqrt{n}} \sup_{0 \le i \le \fl{nt}} (X^n(i) + V^n(i)) 
\]
is tight in $n$.  It follows that
\[
\frac{n}{\alpha(n) \vee \sqrt{n}} \sup_{0 \le i \le \fl{nt}} \frac{2V^n(i) - 2X^n(i) + 2 \I{X^n(i) > 0}}{3(n-i)}
\]
is also tight, and so together with (\ref{eqn:probto0}) and (\ref{eq the most useful estimate for errors}) we get $\frac{n}{\alpha(n) \vee \sqrt{n}} \sup_{0 \le i \le \fl{nt}} |E_{X,1}^n(i)| \convprob 0$.

For the second moment, rewriting the expression (\ref{eqn:squareincrX}) we have
\begin{align*}
& \E{(X^n(i+1) - X^n(i))^2 | \mathcal{F}^n_i} \\
& \qquad = 4\I{X^n(i)=0}+1+\frac{6V^n(i)+8V^n(i)\I{X^n(i)=0}+3X^n(i)-3\I{X^n(i)>0}}{3U^n(i)+4V^n(i)+X^n(i)-\I{X^n(i)>0}} \\
& \qquad =: 4\I{X^n(i)=0}+1 + E_{X,2}^n(i).
\end{align*}
But then
\[
\sup_{0 \le i \le \fl{nt}} |E_{X,2}^n(i)| \leq{\frac{14\alpha(n)+3\sup_{0\leq{i}\leq{\lfloor{nt}\rfloor}}X^n(i)}{3n(1-t)}}
\]
which converges to 0 in probability and in $L^2$, by Lemma~\ref{Lem X is little o of n}.
Turning now to the increments of $N^{n}$, we have
\begin{align*}
& \E{N^n(i+1) - N^n(i) | \mathcal{F}^n_i} \\
& \qquad =\E{(N^n(i+1) - N^n(i))^2 | \mathcal{F}^n_i}\\
& \qquad =  \frac{X^n(i)}{3U^{n}(i)+4V^{n}(i)+X^n(i)-\I{X^n(i)>0}}\\
 & \qquad \qquad +\frac{3U^n(i)-2\I{X^{n}(i)>0}(3U^{n}(i)+4V^{n}(i)+X^{n}(i)-3)}{2(3U^{n}(i)+4V^{n}(i)+X^n(i)-\I{X^n(i)>0})(3U^{n}(i)+4V^{n}(i)+X^n(i)-\I{X^n(i)>0}-2)}\\
 & \qquad = \frac{X^n(i)}{3(n-i)}+\frac{X^n(i)}{3(n-i)}\left({{\left({1+\tfrac{3U^n(i)-3n(1-i/n)+4V^n(i)+X^n(i)-\I{X^n(i)>0}}{3(n-i)}}\right)^{-1}}-1}\right)\\
 & \qquad \qquad +\frac{3U^{n}(i)-2\I{X^{n}(i)>0}(3U^{n}(i)+4V^{n}(i)+X^{n}(i)-3)}{2(3U^{n}(i)+4V^{n}(i)+X^{n}(i)-\I{X^{n}(i)>0})(3U^{n}(i)+4V^{n}(i)+X^{n}(i)-\I{X^{n}(i)>0}-2)}\\
 & \qquad =: \frac{X^n(i)}{3(n-i)} + E_N^{n}(i).
\end{align*}
For $0\leq{i}\leq{\lfloor{nt}\rfloor}$ we have
\begin{equation*}
\begin{aligned}
&\left|{\frac{3U^{n}(i)-2\I{X^{n}(i)>0}(3U^{n}(i)+4V^{n}(i)+X^{n}(i)-3)}{2(3U^{n}(i)+4V^{n}(i)+X^{n}(i)-\I{X^{n}(i)>0})(3U^{n}(i)+4V^{n}(i)+X^{n}(i)-\I{X^{n}(i)>0}-2)}}\right|\\
&\leq{\frac{3}{{2(3U^{n}(i)+4V^{n}(i)+X^{n}(i)-\I{X^{n}(i)>0})}}}\\
&\leq{\frac{3}{2n(1-t)}}.
\end{aligned}
\end{equation*}
Hence,
\[
\begin{aligned}
& \frac{n}{\alpha(n) \vee \sqrt{n}} \sup_{0 \le i \le \fl{nt}} |E_N^n(i)|  \\
& \qquad \le \frac{3}{2(1-t) (\alpha(n)\vee \sqrt{n})} \\
& \qquad \qquad + \frac{\sup_{0 \le i \le \fl{nt}}X^n(i)}{3 (1-t) (\alpha(n) \vee \sqrt{n})} \left({{\left({1+\tfrac{3U^n(i)-3n(1-i/n)+4V^n(i)+X^n(i)-\I{X^n(i)>0}}{3(n-i)}}\right)^{-1}}-1}\right).
\end{aligned}
\]
Using (\ref{eq the most useful estimate for errors}) and the fact that $\frac{1}{\alpha(n) \vee \sqrt{n}} \sup_{0 \le i \le \fl{nt}} X^n(i)$ is bounded in $L^2$, this gives that 
\[
\frac{n}{\alpha(n) \vee \sqrt{n}} \sup_{0 \le i \le \fl{nt}} |E_N^n(i)| \to 0,
\]
in $L^2$, as desired.
\end{proof}

Before we can proceed to the proof of Theorem \ref{thm fluid limit for X for omega>>sqrt n}, we need some technical lemmas to deal with the fact that $X^n$ reflects off 0.  Let 
\[
\gamma^n = \inf \{i \ge 0: X^n(i) \ge V^n(i)\}.
\]

\begin{lem} \label{lem:slemma}
Fix $t \in(1/2, 1)$.  Then there exists $t_0 \in (0,t)$ sufficiently small that
\[
\Prob{\gamma^n < t_0 n} \to 0
\]
as $n \to \infty$.
\end{lem}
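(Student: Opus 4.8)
The plan is to combine the two-stack coupling from Section~\ref{sec:coupling} with the fluid limit for $V^n$ (Lemma~\ref{Lem V3 fluid limit}): for small times the growth of the second stack $X_2^n$ is tied to the depletion of $V^n$, while $V^n$ is still of order $\alpha(n)$. Run the coupling with $Y^n(0) = 2$ and $X_1^n(0) = X_2^n(0) = 0$; since $X_1^n(0) = 0 \le 2 = Y^n(0)$, the coupling gives $X_1^n(i) \le Y^n(i)$ for all $i \ge 0$, while $X_2^n(i) \le V^n(0) - V^n(i) = \alpha(n) - V^n(i)$ because each vertex of degree $4$ enters the second stack at most once. Adding these, $X^n(i) \le Y^n(i) + \alpha(n) - V^n(i)$, so on the event $\{X^n(i) \ge V^n(i)\}$ we must have $Y^n(i) \ge 2V^n(i) - \alpha(n)$. (Note that $U^n(i) \ge n - i$ forces $\zeta_n \ge n$, so the coupling is in force throughout $[0,\fl{nt_0}]$ for any $t_0 < 1$.) Hence $\{\gamma^n < t_0 n\} \subseteq \{\,\exists\, i \le \fl{nt_0}:\ Y^n(i) \ge 2V^n(i) - \alpha(n)\,\}$.

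First I would fix $t_0 \in (0,t)$ small enough that $(1-t_0)^{4/3} \ge 3/4$, which is possible since $(1-t_0)^{4/3} \to 1$ as $t_0 \downarrow 0$. Because $\alpha(n) \gg \sqrt{n}$ forces $\alpha(n) \to \infty$, Lemma~\ref{Lem V3 fluid limit} applies and shows $\Prob{A_n} \to 1$, where
\[
A_n := \Big\{ \sup_{0 \le i \le \fl{nt_0}} \big| V^n(i)/\alpha(n) - (1-i/n)^{4/3} \big| \le 1/8 \Big\}.
\]
On $A_n$ we have $V^n(i) \ge \alpha(n)\big((1-t_0)^{4/3} - 1/8\big) \ge \tfrac58 \alpha(n)$ for all $i \le \fl{nt_0}$, hence $2V^n(i) - \alpha(n) \ge \tfrac14 \alpha(n)$. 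Combining with the first paragraph, on $A_n$ the occurrence of $\{\gamma^n < t_0 n\}$ implies $\sup_{0 \le i \le \fl{nt_0}} Y^n(i) \ge \alpha(n)/4$.

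It remains to control $\sup_{0 \le i \le \fl{nt_0}} Y^n(i)$, which I would do exactly as in the proof of Lemma~\ref{Lem X is little o of n}: $Y^n$ has the same law as $(2 + |Z(i)|)_{i \ge 0}$ for a standard simple symmetric random walk $Z$ with $Z(0) = 0$, so by Doob's $L^2$ inequality $\E{\sup_{0 \le i \le \fl{nt_0}} |Z(i)|^2} \le 4\fl{nt_0}$, and therefore by Markov's inequality there is a constant $C$ (depending only on $t_0$) with
\[
\Prob{ \sup_{0 \le i \le \fl{nt_0}} Y^n(i) \ge \alpha(n)/4 } \le \frac{16\, \E{\big(2 + \sup_{0 \le i \le \fl{nt_0}} |Z(i)|\big)^2}}{\alpha(n)^2} \le \frac{Cn}{\alpha(n)^2} \longrightarrow 0,
\]
the limit holding because $\alpha(n) \gg \sqrt{n}$ means $n/\alpha(n)^2 \to 0$. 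Putting the pieces together, $\Prob{\gamma^n < t_0 n} \le \Prob{A_n^c} + Cn/\alpha(n)^2 \to 0$, as required.

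The one point that needs care — and the reason for using the two-stack coupling rather than a cruder bound — is that the estimate $\sup_i X^n(i) = O(\alpha(n) \vee \sqrt n)$ from Lemma~\ref{Lem X is little o of n} is of the same order $\alpha(n)$ as $V^n$ itself in this regime, hence too weak on its own; the decomposition lets us cancel the degree-$4$ contribution $X_2^n(i)$ to $X^n$ against the depletion $\alpha(n) - V^n(i)$ of $V^n$, leaving only the genuinely $O(\sqrt n)$ quantity $Y^n$ to bound.
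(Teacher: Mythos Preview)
Your proof is correct and takes a genuinely different --- and considerably shorter --- route than the paper's.

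The paper proceeds by first reducing to the event $\{X^n(i) \ge 3\alpha(n)/8\}$, then introducing a fresh auxiliary random walk $A^n$ with increments in $\{-1,+1,+2\}$ that stochastically dominates $X^n$ from above, and finally computing the hitting probability of $A^n$ at level $3\alpha(n)/8$ explicitly (this is the content of Lemma~\ref{lem:hittingprob} in the Appendix). The hitting probability is shown to be of order $\alpha(n)/n$, so the number of visits to 0 before the barrier is geometric with mean of order $n/\alpha(n) \ll \alpha(n)$, and a short martingale argument finishes the job.

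Your argument instead recycles the two-stack coupling already set up in Section~\ref{sec:coupling}: the key observation is that $X^n(i) \ge V^n(i)$ forces $Y^n(i) \ge 2V^n(i) - \alpha(n)$, because the degree-4 contribution $X_2^n$ is exactly bounded by the depletion $\alpha(n) - V^n(i)$. The fluid limit for $V^n$ then makes the right-hand side of order $\alpha(n)$ uniformly on $[0,t_0 n]$, while $Y^n$ is a reflected simple random walk and hence of order $\sqrt{n} \ll \alpha(n)$ by Doob. This is cleaner: it avoids the new auxiliary process, the explicit hitting-probability calculation, and the appendix lemma entirely. The paper's approach, on the other hand, extracts more quantitative information about the local time of $X^n$ at 0 (via the geometric number of returns), which feeds into the proof of Proposition~\ref{prop:localtime}; your argument gives only the qualitative statement of the lemma, but that is all that was asked for.
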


\begin{proof}
Let $\tilde{\gamma}^n = \inf\{i \ge 0: X^n(i) \ge 3 \alpha(n)/8\}$.  Observe that $V^n$ is decreasing and that if $s_0 = 1 - \frac{1}{2^{3/4}} \approx 0.405$ then $(1-s_0)^{4/3} = 1/2 > 3/8$.  Then by Lemma~\ref{Lem V3 fluid limit},
\[
\Prob{V^n(\fl{ns_0}) > 3\alpha(n)/8} \to 1
\]
as $n \to \infty$. So if we can show that there exists $s_1 \in (0,t)$ such that
\begin{equation} \label{eqn:simpler}
\Prob{\tilde{\gamma}^n < ns_1} \to 0
\end{equation}
as $n \to \infty$ then, setting $t_0 = s_0 \wedge s_1$, we obtain
\[
\Prob{\gamma^n < nt_0} \to 0.
\]
So it remains to prove (\ref{eqn:simpler}) for some $s_1 \in (0,t)$.

The time-inhomogeneity of the process $X^n$ makes explicit calculations awkward.  We instead couple $X^n$ with a simpler process $A^n$. If $A^n(i) = 0$ then $A^n(i+1) = 1$.  If $A^n(i) > 0$ then
\[
A^n(i+1) - A^n(i) = \begin{cases}
-1 & \text{ with probability } \frac{1}{2} \\
+1 & \text{ with probability } \frac{1}{2} - \frac{4 \alpha(n)}{3n(1-t)} \\
+2 & \text{ with probability } \frac{4\alpha(n)}{3n(1-t)}.
\end{cases}
\]
(We implicitly take $n$ sufficiently large that $\frac{4 \alpha(n)}{3n(1-t)} < 1/2$.)  We set $A^n(0) = 0$.
It is straightforward to see that we may couple $X^n$ and $A^n$ in such a way that $X^n(i) \le 3 + A^n(i)$ for all $0 \le i \le \fl{nt}$.  Let $\lambda_n = \inf\{i \ge 0: A^n(i) \ge 3\alpha(n)/8 - 3\}$.  Then we certainly have $\lambda_n \le \tilde{\gamma}_n$.  So we will find $s_1 \in (0,t)$ such that
\[
\Prob{\lambda_n < n s_1} \to 0.
\]
For simplicity, write $b = \fl{3\alpha(n)/8} - 3$ for the upper barrier and $d = \frac{4\alpha(n)}{3n(1-t)}$ for the drift.  The quantity 
\[
h_1 = \Prob{\text{$A^n$ hits $0$ before $\{b,b+1\}$} | A^n(0) = 1}
\]
is calculated via a standard calculation in Lemma~\ref{lem:hittingprob} in the Appendix.
Substituting the given values of $b$ and $d$ in the (lengthy) expression there, we obtain that
\[
1-h_1 \sim \frac{8 \alpha(n)}{3n(1-t)},
\]
as $n \to \infty$. 
By the strong Markov property, the random walk $A^n$ visits 0 a Geometric($1-h_1$) number of times before going above $3\alpha(n)/8 - 3$, and so
\[
\frac{1}{\alpha(n)} \E{\sum_{i=0}^{\lambda_n-1}\I{A^n(i) = 0}} \sim \frac{3n(1-t)}{8 \alpha(n)^2} \to 0,
\]
since $\alpha(n) > > \sqrt{n}$.

Now let
\[
M^n_{A}(i) := A^n(i) -\frac{4 \alpha(n) i}{3n(1-t)} - \sum_{j=0}^{i-1} \left(1 - \frac{4 \alpha(n)}{3n(1-t)} \right) \I{A^n(j) = 0}.
\]
Then $(M^n_{A}(i))_{i \ge 0}$ is a martingale.  Let $\mathcal{F}^{n}_A(j)$ be the natural filtration of $A^{n}$. We have
\[
\var{A^n(j+1)-A^n(j)|\mathcal{F}_A^n(j)} \le \left(1 + \frac{4\alpha(n)}{n(1-t)}\right) \I{A^n(j) > 0} \le 1 + \frac{4\alpha(n)}{n(1-t)}.
\]
and so
\[
\E{(M^n_{A}(i))^2} \le i \left(1 + \frac{4\alpha(n)}{n(1-t)}\right) \le 2i
\]
for $n$ sufficiently large. For any $s > 0$, then,
\[
\sup_{0 \le i \le \lambda_n \wedge \fl{ns}} \left| \frac{A^n(i)}{\alpha(n)} - \frac{4 i}{3n(1-t)} \right| \le \frac{\sup_{0 \le i \le \fl{ns}} |M^n_{A}(i)|}{\alpha(n)} + \frac{1}{\alpha(n)} \sum_{j=0}^{\lambda_n-1} \I{A^n(i) = 0}.
\]
For any $\delta > 0$, it then follows by using Doob's $L^2$ inequality that
\begin{align*}
\Prob{\sup_{0 \le i \le \lambda_n \wedge \fl{ns}} \left| \frac{A^n(i)}{\alpha(n)} - \frac{4 i}{3n(1-t)} \right| > \delta} & \le 4 \frac{\E{M_{A}^n(\fl{ns})^2}}{\delta^2 \alpha(n)^2} + \frac{1}{\delta \alpha(n)} \E{\sum_{j=0}^{\lambda_n-1} \I{A^n(i) = 0}} \\
& \le \frac{8 \fl{ns}}{\delta^2 \alpha(n)^2} + \frac{1}{\delta \alpha(n)} \E{\sum_{j=0}^{\lambda_n-1} \I{A^n(i) = 0}} \to 0,
\end{align*}
as $n \to \infty$. 

Finally, if we take $s_1 = 3(1-t)/16$ then
\[
\frac{4 \fl{ns_1}}{3n(1-t)} \le \frac{1}{4} < \frac{3}{8}.
\]
Then for $\delta \in (0, 1/8)$,
\[
\Prob{\lambda_n < ns_1} \le \Prob{\sup_{0 \le i \le \lambda_n \wedge \fl{ns_1}} \left| \frac{A^n(i)}{\alpha(n)} - \frac{4 i}{3n(1-t)} \right| > \delta} \to 0
\]
as $n \to \infty$.  The result follows.
\end{proof}

\begin{prop} \label{prop:localtime}
For the $t_0 \in (0,t)$ from Lemma~\ref{lem:slemma}, 
\[
\frac{L^n(\fl{nt_0})}{\alpha(n)} \convprob 0,
\]
as $n\to\infty$.
\end{prop}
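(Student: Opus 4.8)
The plan is to bound the expected number of returns of $X^n$ to $0$ before time $\fl{nt_0}$ by $O(n/\alpha(n))$. Since $L^n(\fl{nt_0}) = 2\sum_{i=0}^{\fl{nt_0}-1}\I{X^n(i)=0}$ and $n/\alpha(n) = o(\alpha(n))$ in the regime $\alpha(n)\gg\sqrt n$ (this is exactly the statement that $\alpha(n)/\sqrt n\to\infty$), the claim then follows from Markov's inequality. I would first observe that the tempting route — decomposing $L^n(\fl{nt_0})$ using the martingale associated with $X^n$ via the identity (\ref{eq exp changes X}), so that $L^n(\fl{nt_0}) = X^n(\fl{nt_0}) - M^n_X(\fl{nt_0}) - \sum_{j}\E{X^n(j+1)-X^n(j)\mid\mathcal{F}^n_j}+\dots$ — is not useful here: it expresses $L^n(\fl{nt_0})$ through quantities of order $\alpha(n)$, and the cancellation that makes the answer $o(\alpha(n))$ would require the fluid limit for $X^n$ from Theorem~\ref{thm fluid limit for X for omega>>sqrt n}, which is precisely what we are building up to. So a genuine excursion estimate is needed.

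\emph{Localisation.} Recall from the proof of Lemma~\ref{lem:slemma} that, for the $t_0$ produced there (which satisfies $t_0 = s_0\wedge s_1 \le s_1 < s_0$), one has $\Prob{\tilde\gamma^n < \fl{nt_0}}\to 0$, where $\tilde\gamma^n = \inf\{i\ge 0: X^n(i)\ge 3\alpha(n)/8\}$. Since $t_0 < s_0$ we have $(1-t_0)^{4/3} > 1/2 > 3/8$, so we may fix a constant $c$ with $3/8 < c < (1-t_0)^{4/3}$. Because $V^n$ is non-increasing and $V^n(\fl{nt_0})/\alpha(n)\convprob (1-t_0)^{4/3} > c$ by Lemma~\ref{Lem V3 fluid limit} (which applies since $\alpha(n)\to\infty$), the stopping time $\theta_n := \fl{nt_0}\wedge\tilde\gamma^n\wedge\inf\{i\ge 0: V^n(i) < c\alpha(n)\}$ satisfies $\Prob{\theta_n < \fl{nt_0}}\to 0$, and $L^n(\theta_n)=L^n(\fl{nt_0})$ on $\{\theta_n=\fl{nt_0}\}$. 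For $0\le i < \theta_n$ with $X^n(i)>0$ the exact identity (\ref{eq exp changes X}), together with $U^n(i)\le n+\alpha(n)$, $c\alpha(n)\le V^n(i)\le\alpha(n)$ and $X^n(i) < 3\alpha(n)/8$ (so the numerator $2V^n(i)-2X^n(i)+2$ is at least $2(c-3/8)\alpha(n)$ and the denominator is $O(n)$), yields a lower bound on the conditional mean increment of $X^n$ of the form $\mu_n := \kappa\,\alpha(n)/n$ for a fixed $\kappa>0$ and all large $n$; in particular $\mu_n\to 0$. From state $0$ the chain jumps into $\{1,3,4\}$ (recall $U^n(i)\ge n-i>0$), so there the conditional mean increment is at least $2$.

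\emph{The excursion estimate.} The key step is to convert this positive drift near $0$ into a bound on the number of returns to $0$ using a \emph{bounded} Lyapunov function. Set $h(x) = r_n^{x}$ with $r_n = 1 - c'\mu_n\in(0,1)$ for a small fixed $c'>0$. Since the increments of $X^n$ lie in $\{-2,-1,1,2\}$ whenever $X^n(i)>0$ (a $-2$ step is impossible from state $1$, so $X^n$ never becomes negative) and have conditional mean at least $\mu_n$ there, a short Taylor expansion in $\mu_n$ gives $\E{r_n^{\,X^n(i+1)-X^n(i)}\mid\mathcal{F}^n_i}\le 1$ on $\{i<\theta_n,\,X^n(i)>0\}$ for $n$ large, so that $h(X^n)$ is a supermartingale there; and from $0$ the chain jumps into $\{1,3,4\}$, whence $\E{h(X^n(i+1))\mid\mathcal{F}^n_i}\le r_n = h(0) - (1-r_n)$ on $\{i<\theta_n,\,X^n(i)=0\}$. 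Consequently $Q(i) := h\big(X^n(i\wedge\theta_n)\big) + (1-r_n)\sum_{j=0}^{i\wedge\theta_n-1}\I{X^n(j)=0}$ is a supermartingale with $Q(0)=h(0)=1$, so, using $h\ge 0$, $(1-r_n)\,\E{\sum_{j=0}^{\theta_n-1}\I{X^n(j)=0}}\le\E{Q(\fl{nt_0})}\le 1$, i.e. $\E{L^n(\theta_n)}\le 2/(1-r_n) = O(1/\mu_n) = O(n/\alpha(n))$. Hence $\E{L^n(\theta_n)/\alpha(n)} = O(n/\alpha(n)^2)\to 0$, so $L^n(\theta_n)/\alpha(n)\convprob 0$, and combining with $\Prob{\theta_n<\fl{nt_0}}\to 0$ gives $L^n(\fl{nt_0})/\alpha(n)\convprob 0$.

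\emph{Main obstacle.} The crux is the observation already flagged: every ``standard'' decomposition of the local time only delivers a bound of order $\alpha(n)$ and is moreover circular, so one must count excursions of $X^n$ away from $0$ directly. The device that makes this work with no quantitative control of excursion lengths is the bounded exponential Lyapunov function $h(x)=r_n^x$ with $1-r_n\asymp\mu_n\asymp\alpha(n)/n$: it turns each visit to $0$ into a fixed downward drop of $1-r_n$ in a $[0,1]$-valued supermartingale, yielding the bound $O(1/\mu_n)=O(n/\alpha(n))$ on the number of returns. The secondary technical point is ensuring the drift lower bound $\mu_n$ holds deterministically, which is arranged by running the argument only up to the stopping time $\theta_n$ assembled from Lemma~\ref{lem:slemma} and the fluid limit for $V^n$.
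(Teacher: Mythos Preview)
Your argument is correct, and takes a genuinely different route from the paper's proof. The paper couples $X^n$ from \emph{below} by a driftless process $D^n$ (so $D^n(i)\le X^n(i)$ for $i<\gamma^n$), observes that $D^n(i)-\tfrac12\tilde L^n(i)$ is a martingale with variance essentially $i$, and applies the martingale functional CLT together with the Skorokhod reflection identity to conclude that $\tilde L^n(\fl{nt_0})/\sqrt n$ converges in distribution (to $-2\inf_{0\le u\le t_0}W_u$); since $\tilde L^n\ge L^n$ on the good event and $\alpha(n)\gg\sqrt n$, the result follows. Thus the paper obtains $L^n(\fl{nt_0})=O_p(\sqrt n)$ without using the positive drift of $X^n$ at all.

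You instead exploit that drift directly: the exponential Lyapunov function $r_n^{X^n}$ with $1-r_n\asymp\alpha(n)/n$ turns the drift $\ge\mu_n$ into a bounded supermartingale that pays a fixed toll $1-r_n$ at each visit to $0$, giving the expectation bound $\E{L^n(\theta_n)}=O(n/\alpha(n))$. This is sharper (since $n/\alpha(n)=o(\sqrt n)$ here), more elementary (no FCLT), and in fact recovers the same order of magnitude the paper computed for the auxiliary walk $A^n$ inside Lemma~\ref{lem:slemma}---though that calculation could not be used directly because $A^n$ bounds $X^n$ from above, not below. One small remark: your claim ``$t_0=s_0\wedge s_1\le s_1<s_0$'' is fine, since in the proof of Lemma~\ref{lem:slemma} one has $s_1=3(1-t)/16<3/32<s_0$ for $t\in(1/2,1)$, so $(1-t_0)^{4/3}>1/2$ and the constant $c\in(3/8,(1-t_0)^{4/3})$ exists.
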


\begin{proof}

We again make use of a coupling, this time to produce a lower bound: we define a process $D^n$ such that $D^n(i) \le X^n(i)$ for  $i \le \gamma^n -1$.  Conditionally on $(U^n(i), V^n(i), X^n(i), D^n(i)) = (u,v,x,d)$ with $D^n(i) = d > 0$, we let
\[
D^n(i+1) - D^n(i) = \begin{cases}
+2 & \text{ with probability } \frac{4(x-1)\I{x > 0}}{2(3u+4v+(x-1)\I{x>0})} \\
+1 & \text{ with probability } \frac{3u + 4v - 4(x-1)\I{x > 0}}{2(3u+4v+(x-1)\I{x>0})} \\
-1 & \text{ with probability } \frac{3u + 4v}{2(3u+4v+(x-1)\I{x>0})} \\
-2 & \text{ with probability } \frac{2(x-1)\I{x > 0}}{2(3u+4v+(x-1)\I{x>0})}.
\end{cases}
\]
Conditionally on $(U^n(i), V^n(i), X^n(i), D^n(i)) = (u,v,x,d)$ with $D^n(i) = 0$, let $D^n(i+1) = 1$.  Since we assume $i \le \gamma^n-1$, it is straightforward to see that we may produce a coupling such that $D^n(i) \le X^n(i)$.  Let
\[
\tilde{L}^n(i) = 2 \sum_{j=0}^{i-1} \I{D^n(j) = 0},
\]
and observe that we have $\tilde{L}^n(\gamma^n \wedge \fl{nt_0}) \ge L^n(\gamma^n \wedge \fl{nt_0})$.  

But then for any $\delta > 0$,
\[
\Prob{\frac{L^n(\fl{nt_0})}{\alpha(n)} > \delta} \le \Prob{\frac{\tilde{L}^n(\gamma^n \wedge \fl{nt_0})}{\alpha(n)} > \delta, \gamma^n \ge \fl{nt_0}} + \Prob{\gamma^n < \fl{nt_0}}.
\]

By Lemma~\ref{lem:slemma}, we have that $\Prob{\gamma^n < \fl{nt_0}} \to 0$ and so it suffices to prove that the first term tends to 0.

We have
\[
\E{D^n(i+1) - D^n(i) | \mathcal{F}^n_D(i)} = \I{D^n(i) = 0},
\]
where $(\mathcal{F}_D^n(i))_{i \ge 0}$ denotes the natural filtration of $(U^n, V^n, X^n, D^n)$. Also, 
\[
\var{D^n(i+1) - D^n(i) | \mathcal{F}_D^n(i)} = 1 + \frac{9(X^n(i) - \I{X^n(i) > 0})}{3U^n(i) + 4V^n(i) + X^n(i) - \I{X^n(i) > 0}},
\]
where
\[
\frac{9(X^n(i) - \I{X^n(i) > 0})}{3U^n(i) + 4V^n(i) + X^n(i) - \I{X^n(i) > 0}} \le \frac{3 \alpha(n)}{n(1-s)},
\]
for $i \le \gamma^n \wedge \fl{ns}$. 

Hence,
\[
\left(D^n(i) - \sum_{j=0}^{i-1}\I{D^n(j)=0} \right)_{i \ge 0}
\]
is a mean 0 martingale, with bounded steps and
\[
\sum_{j=0}^{i-1} \E{\left(D^n(j+1) - D^n(j) - \I{D^n(j) = 0}\right)^2 \Big| \mathcal{F}^n_D(j)} = i + E^{n}_{D}(i)
\]
for $i \le \gamma^n \wedge \fl{nt_0}$, where $\sup_{i \le \gamma^n \wedge \fl{nt_0}}\frac{{E^{n}_{D}(i)}}{\alpha(n)}$ is bounded with high probability.  By the martingale functional central limit theorem (Theorem 1.4, Section 7.1 of \cite{EthierKurtz}), we then have that, on the event $\{\gamma^n \ge \fl{nt_0}\}$,
\[
\frac{1}{\sqrt{n}} \left(D^n(\fl{nu}) - \sum_{j=0}^{\fl{nu}-1}\I{D^n(j)=0} \right)_{0 \le u \le t_0} \convdist (W_u)_{0 \le u \le t_0},
\]
where $W$ is a standard Brownian motion.  But we also have that
\[
\frac{1}{\sqrt{n}} \sum_{j=0}^{\fl{nt_0} - 1} \I{D^n(j) = 0} = - \inf_{0 \le i \le \fl{nt_0}}\frac{1}{\sqrt{n}} \left(D^n(i) - \sum_{j=0}^{i-1}\I{D^n(j)=0} \right),
\]
and so by the continuous mapping theorem we deduce that on $\{\gamma^n \ge \fl{nt_0}\}$,
\[
\frac{\tilde{L}^n(\fl{nt_0})}{\sqrt{n}} = \frac{2}{\sqrt{n}} \sum_{j=0}^{\fl{nt_0} -1} \I{D^n(j) = 0} \convdist - 2 \inf_{0 \le u \le t_0} W_u.
\]
The result then follows since $\alpha(n) > > \sqrt{n}$.
\end{proof}

We now turn to the proof of the main theorem in this section.

\begin{proof}[Proof of Theorem~\ref{thm fluid limit for X for omega>>sqrt n}]

Lemma \ref{Lem expected increments of X} gives us that the expected increments of $X^n$ and $N^n$ are 
\begin{align*}
\E{X^n(i+1) - X^n(i) | \mathcal{F}^n_i}
& =2 \I{X^n(i) = 0} + \frac{\alpha(n)}{n} \left(\frac{2(1-i/n)^{1/3}}{3}-\frac{2X^n(i)/\alpha(n)}{3(1-i/n)}\right)+E_{X,1}^{n}(i),\\
\E{N^n(i+1) - N^n(i) | \mathcal{F}^n_i}&=  \frac{\alpha(n)}{n} \frac{X^n(i)/\alpha(n)}{3(1-i/n)}+E_N^{n}(i).
\end{align*}
This suggests that, as long as $X^n$ does not hit 0 too often, the candidate for the fluid limit should be the solution to
\begin{equation}\label{eq fluid limit proxy XN}
\frac{dx(s)}{ds} =\frac{2}{3}(1-s)^{1/3}-\frac{2x(s)}{3(1-s)}, \quad \frac{dm(s)}{ds} =\frac{x(s)}{3(1-s)},
\end{equation}
with initial conditions $x(0)=0$, $m(0)=0$. The unique solution to this system of differential equations is given by $x(s)=(1-s)^{2/3}-(1-s)^{4/3}$ and $m(s)=\frac{1}{4}-\frac{1}{2}(1-s)^{2/3}+\frac{1}{4}(1-s)^{4/3}$ for $s\in[0,1]$. We will use Proposition~\ref{prop:localtime} to help control the local time at the start, and then prove the fluid limit result in the standard manner.

Let
\[
T^n = \inf\{s \ge t_0: X^n(\fl{ns}) = 0\}.
\]
Then for $t \ge t_0$ we have
\[
L^n(\fl{n(t \wedge T^n)}) = L^n(\fl{nt_0}),
\]
since we may only accumulate local time at 0. For $i \ge 0$, let
\[
M_X^n(i) = X^n(i) - \sum_{j=0}^{i-1} \E{X^n(j+1)-X^n(j) | \mathcal{F}_j^n},
\]
so that $(M_X^n(i))_{i \ge 0}$ is a martingale.  We have
\[
\E{M_X^n(i)^2} = \E{\sum_{j=0}^{i-1} \var{X^n(j+1) - X^n(j) | \mathcal{F}_j^n}} \le i \left(1 + \E{\sup_{0 \le j \le i} |E_{X,2}^n(j)|}\right),
\]
and so 
\[
\frac{\E{M_X^n(\fl{nt})^2}}{\alpha(n)^2} \le \frac{\fl{nt}\left(1 + \E{\sup_{0 \le i \le \fl{nt}} |E^n_{X,2}(i)|} \right)}{\alpha(n)^2} \to 0,
\]
by Lemma~\ref{Lem expected increments of X} and the fact that $\alpha(n) > > \sqrt{n}$.  It follows as usual that
\begin{equation} \label{eqn:Mbound}
\frac{\sup_{0 \le s \le t \wedge T_n} |M_X^n(\fl{nt})|}{\alpha(n)} \convprob 0.
\end{equation}
Then 
\begin{align*}
& \sup_{0 \le s \le t \wedge T^n}\left| \sum_{j=0}^{\fl{ns} -1} \frac{\E{X^n(j+1) - X^n(j) | \mathcal{F}_j^n}}{\alpha(n)} - \int_0^s \left(\frac{2}{3} (1-u)^{1/3} - \frac{2x(s)}{3(1-s)} \right) du \right| \\
& \le \sup_{0 \le s \le t } \left| \frac{1}{n} \sum_{j=0}^{\fl{ns}-1} \frac{2}{3} (1-j/n)^{1/3} - \int_0^s \frac{2}{3} (1-u)^{1/3} du \right|  \\
& \qquad +  \sup_{0 \le s \le t \wedge T^n}\left| \frac{1}{n} \sum_{j=0}^{\fl{ns} -1} \frac{2}{3(1-j/n)} \frac{X^n(j)}{\alpha(n)} - \int_0^s \frac{2x(u)}{3(1-u)}du \right| + \frac{\fl{nt}}{\alpha(n)} \sup_{0 \le i \le \fl{n(t\wedge T^n)}} |E^n_{X,1}(i)|.
\end{align*}
The first term is clearly the difference between a Riemann approximation to an integral and that integral, and tends to 0.  The third term converges in probability to 0 by Lemma~\ref{Lem expected increments of X}.  The middle term is bounded above by
\begin{align*}
& \frac{2}{3(1-t)} \int_0^{\fl{n(t \wedge T^n)}/n} \left| \frac{X^n(\fl{nu})}{\alpha(n)} - x(u) \right| du + \int_0^t \left|\frac{2}{3(1-u)} -  \frac{2}{3(1-\fl{nu}/u)} \right| x(u) du \\
& \qquad + \sup_{0 \le s \le t} \int_{\fl{ns}/n}^s \frac{2x(u)}{3(1-u)} du,
\end{align*}
where the second and third terms clearly tend to 0 as $n \to \infty$.  The rest of the argument then goes through as in the proof of Proposition~\ref{prop:fluidlimit} to show that
\[
\sup_{0 \le s \le t \wedge T^n} \left|\frac{X^n(\fl{ns})}{\alpha(n)} - x(s)\right| \convprob 0.
\]
Now observe that for fixed $t > t_0$, $\inf_{t_0 \le s \le t} x(s) > 0$.  So for sufficiently small $\delta > 0$,
\[
\sup_{0 \le s \le t \wedge T^n} \left|\frac{X^n(\fl{ns})}{\alpha(n)} - x(s)\right| \le \delta
\]
implies that $T^n > t$ and $L^n(\fl{nt}) = L^n(\fl{nt_0})$.  Hence,
\[
\sup_{0 \le s \le t} \left|\frac{X^n(\fl{ns})}{\alpha(n)} - x(s)\right| \convprob 0 \quad \text{ and } \quad \frac{L^n(\fl{nt})}{\alpha(n)} \convprob 0.
\]

Finally, to get the result for $N^n$, note that again
\[
M^n_N(i) = N^n(i) - \sum_{j=0}^{i-1} \E{N^n(j+1)-N^n(j)|\mathcal{F}_j^n},
\]
defines a martingale, with
\[
\E{M^n_N(i)^2} \le \E{\sum_{j=0}^{i-1} \E{N^n(j+1)-N^n(j) | \mathcal{F}_j^n}}
\]
and so
\begin{align*}
\frac{\E{M^n_N(\fl{nt})^2}}{\alpha(n)^2} & \le \frac{1}{n\alpha(n)} \E{ \sum_{i=0}^{\fl{nt}-1} \frac{X^n(i)/\alpha(n)}{3(1-i/n)}} + \frac{nt}{\alpha(n)^2}\E{\sup_{0 \le i \le \fl{nt}} E_N^n(i)} \\
& \le \frac{t}{3(1-t)\alpha(n)^2}\E{\sup_{0 \le i \le \fl{nt}}X^n(i)} + \frac{nt}{\alpha(n)^2}\E{\sup_{0 \le i \le \fl{nt}} E_N^n(i)} \to 0,
\end{align*}
by Lemmas~\ref{Lem X is little o of n} and \ref{Lem expected increments of X}. Then
\begin{align*}
\sup_{0 \le s \le t} \left|\frac{N^n(\fl{ns})}{\alpha(n)} - m(s) \right| & \le \frac{\sup_{0 \le s \le t} |M_N^n(\fl{ns})|}{\alpha(n)}  + \frac{\sup_{0 \le i \le \fl{nt}} |E_N^n(i)|}{\alpha(n)} \\
& \qquad + \sup_{0 \le s \le t} \left| \frac{1}{n} \sum_{j=0}^{\fl{ns}-1} \frac{1}{3(1-j/n)} \frac{X^n(j)}{\alpha(n)} - \int_0^s \frac{x(s)}{3(1-s)} ds \right|,
\end{align*}
and the usual argument allows us to complete the proof.
\end{proof}

%
%
\section {Diffusion limit for $\alpha(n)=O({\sqrt{n}})$} \label{Sect Diffusion limit small omega}
\subsection{The limiting process}

\begin{prop} \label{Prop sistem 1}
Let $a \ge 0$ and let $(B_t, 0\leq{t} \le 1)$ be a Brownian motion. There is a unique solution $(X^a, L^a)$ to the SDE with reflection at $0$ given by
\begin{equation} \label{eq sys sde omega}
dX_t^a=-\frac{2}{3}{\frac{X_t^a}{(1-t)}}dt+\frac{2a}{3}(1 - t)^{1/3} dt + dB_t+dL_t^a ,
\end{equation}
for $0\leq{t} < 1$ with initial condition $X^a_0=0$, $L_a^0=0$, satisfying the conditions (a) $X^a_{t}\geq{0}$ for $0\leq{t}<1$, (b) $L_{t}^a$ is non-decreasing, and (c) $\int_{0}^{t}{\I{{{X^a_s>0}}}}{dL^a_{s}}=0$ for $0\leq{t}<1$. 

Indeed, let
\[
K_t^a = - \inf_{0 \le s \le t} \left[ a - a(1-s)^{2/3} +  \int_0^s \frac{d B_u}{(1-u)^{2/3}} \right].
\]
Then the solution to (\ref{eq sys sde omega}) is given for $0 \le t < 1$ by
\begin{align*}
L_t^a & = \int_0^t (1-s)^{2/3} dK_s^a, \\
X_t^a & = a(1-t)^{2/3} - a(1-t)^{4/3} + (1 - t)^{2/3}  \int_0^t \frac{d B_u}{(1-u)^{2/3}} + (1-t)^{2/3} K_t^a.
\end{align*}
Moreover, the following statements are true almost surely:
\[
X_1^a := \lim_{t\to{1-}}{X}_t^a = 0, \quad  \int_{0}^{1}\frac{X_s^a}{3(1-s)}ds<\infty, \quad L_1^a := \lim_{t \to 1-} L_t^a < \infty.
\]
Finally, for $a \ge 0$ and $r \ge a+1$ we have that 
\begin{equation*}
\sup_{0 \le t < 1} \Prob{\frac{X^{a}_t}{\sqrt{1-t}} > r} \le e^{-(r-a)^2/6}.
\end{equation*}
\end{prop}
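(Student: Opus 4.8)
The plan is to read off from the explicit solution that $X^a_t/\sqrt{1-t}$ is $(1-t)^{1/6}$ times a reflected process, to dominate that reflected process by a constant plus a running maximum of the Gaussian martingale driving it, and then to estimate that running maximum by Doob's maximal inequality. Throughout I would write $M_t := \int_0^t (1-u)^{-2/3}\,dB_u$, a continuous Gaussian martingale with deterministic quadratic variation $\langle M\rangle_t = \int_0^t (1-u)^{-4/3}\,du = 3\bigl((1-t)^{-1/3}-1\bigr)$, and $g(s) := a\bigl(1-(1-s)^{2/3}\bigr)$, which is nondecreasing with $g(0)=0$ and $g(s)\le a$ for $s<1$.

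First I would divide the displayed formula for $X^a_t$ by $(1-t)^{2/3}$ to obtain
\[
\frac{X_t^a}{(1-t)^{2/3}} = g(t) + M_t - \inf_{0\le s\le t}\bigl(g(s)+M_s\bigr), \qquad\text{hence}\qquad \frac{X_t^a}{\sqrt{1-t}} = (1-t)^{1/6}\Bigl(g(t)+M_t-\inf_{0\le s\le t}(g(s)+M_s)\Bigr),
\]
using $2/3-1/2=1/6$. Since $g\ge 0$ we have $\inf_{s\le t}(g(s)+M_s)\ge\inf_{s\le t}M_s$, and since $g(t)\le a$, $(1-t)^{1/6}\le 1$ and $a\ge 0$ this yields
\[
\frac{X_t^a}{\sqrt{1-t}} \le (1-t)^{1/6}\Bigl(a+M_t-\inf_{0\le s\le t}M_s\Bigr) \le a + (1-t)^{1/6}\Bigl(M_t-\inf_{0\le s\le t}M_s\Bigr).
\]
As $r\ge a+1$ forces $r-a\ge 1>0$, it follows that
\[
\Prob{\frac{X^a_t}{\sqrt{1-t}}>r} \le \Prob{M_t-\inf_{0\le s\le t}M_s > \frac{r-a}{(1-t)^{1/6}}}.
\]

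To bound the right-hand side I would use that, writing $M$ as a deterministic time-change of a standard Brownian motion, time-reversal (the reflection principle) shows $M_t-\inf_{0\le s\le t}M_s$ has the same law as $\sup_{0\le s\le t}M_s$; and Doob's maximal inequality applied to the nonnegative martingale $\exp\bigl(\lambda M_s-\tfrac12\lambda^2\langle M\rangle_s\bigr)$, optimised over $\lambda>0$, gives the standard estimate $\Prob{\sup_{0\le s\le t}M_s\ge x}\le\exp\bigl(-x^2/(2\langle M\rangle_t)\bigr)$ for $x>0$. Plugging in $x=(r-a)(1-t)^{-1/6}$ and using $(1-t)^{1/3}\bigl((1-t)^{-1/3}-1\bigr)=1-(1-t)^{1/3}\le 1$,
\[
\frac{x^2}{2\langle M\rangle_t} = \frac{(r-a)^2}{6\bigl(1-(1-t)^{1/3}\bigr)} \ge \frac{(r-a)^2}{6},
\]
so $\Prob{X^a_t/\sqrt{1-t}>r}\le e^{-(r-a)^2/6}$ for every $t\in[0,1)$ (the case $t=0$ being trivial); taking the supremum over $t$ finishes the proof.

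The one point requiring care — and where I would concentrate the argument — is the running-infimum term produced by the Skorokhod reflection, which involves $g+M$ rather than $M$ alone: since the Skorokhod reflection map is not monotone one cannot simply drop $g$ inside the infimum, but the elementary bounds $0\le g\le a$ let one replace $\inf_{s\le t}(g(s)+M_s)$ by $\inf_{s\le t}M_s$ at the additive cost $a$, reducing matters to the one-sided Gaussian maximal estimate above. I would also double-check the two exponent identities ($2/3-1/2=1/6$ and $(1-t)^{1/3}((1-t)^{-1/3}-1)=1-(1-t)^{1/3}$), since the constant $1/6$ in the statement depends on them.
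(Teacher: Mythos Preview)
Your argument for the tail bound is correct and is essentially the paper's approach in different clothing. The paper first records the deterministic inequality $X^a_t \le a(1-t)^{2/3}+X^0_t$ --- which is exactly your Skorokhod-reflection bound $g(t)+M_t-\inf_{s\le t}(g(s)+M_s)\le a + (M_t-\inf_{s\le t}M_s)$ multiplied through by $(1-t)^{2/3}$ --- and then, instead of going via time-reversal and Doob, uses the L\'evy identity $M_t-\inf_{s\le t}M_s \stackrel{d}{=} |W_{\langle M\rangle_t}|$ directly together with the elementary Gaussian tail estimate; since $\sup_{s\le t}M_s \stackrel{d}{=}|W_{\langle M\rangle_t}|$ as well, the two computations coincide and yield the same constant $1/6$.
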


\begin{proof}
Theorem 2.1.1 from \cite{Pilipenko} guarantees the existence of a unique solution to the reflected SDE on the time-interval $[0,t]$ for any fixed $t < 1$, since the diffusion and drift coefficients satisfy the appropriate Lipschitz and linear growth conditions. 

For $0\leq{t}<1$, define 
\begin{align*}
Y^a_t & = a - a(1-t)^{2/3} + \int_{0}^{t}\frac{d{B}_u}{(1-u)^{2/3}} - \inf_{0 \le s \le t} \left[ a - a(1-s)^{2/3} + \int_{0}^{s}\frac{d{B}_u}{(1-u)^{2/3}} \right] \\
 & = a - a(1-t)^{2/3} + \int_{0}^{t}\frac{d{B}_u}{(1-u)^{2/3}} + K_t^a.
\end{align*}
Then we straightforwardly have that
\[
dY^a_t = \frac{2a}{3}(1-t)^{-1/3}dt + (1-t)^{-2/3} dB_t + dK_t^a,
\]
and, by Skorokhod's lemma (see, for example, Lemma 2.1 of Chapter VI of \cite{RevuzYor}), $K^a$ is the local time at 0 of $Y^a$.  

Now let $X^a_t = (1-t)^{2/3} Y_t^a$ and $L^a_t = \int_0^t (1-s)^{2/3} dK_s^a$.  Then, by construction, $Y_t^a \ge 0$ for $0 \le t < 1$ and so $X_t^a \ge 0$ for $0 \le t < 1$.  Furthermore,
\begin{align*}
dX_t^a & = - \frac{2}{3} (1-t)^{-1/3} Y^a_t dt + (1-t)^{2/3} dY_t^a \\
& = -\frac{2}{3} \frac{X_t^a}{(1-t)} dt + \frac{2a}{3}(1-t)^{1/3}dt + dB_t + (1-t)^{2/3}dK_t^a \\
& = -\frac{2}{3} \frac{X_t^a}{(1-t)} dt + \frac{2a}{3}(1-t)^{1/3}dt + dB_t + dL_t^a.
\end{align*}
Clearly $L_0^a = 0$ and $L_t^a$ is non-decreasing.  Let us show that the local time at 0 of $X^a$ is $L^a$.  By Tanaka's formula, the local time is given by
\begin{align*}
& |X_t^a| - \int_0^t \mathrm{sgn}(X_s^a) dX_s^a \\
& = (1-t)^{2/3} |Y_t^a| + \frac{2}{3} \int_0^t \mathrm{sgn}(Y_s^a) Y_s^a (1-s)^{-1/3} ds - \int_0^t (1-s)^{2/3} \mathrm{sgn}(Y_s^a) dY_s^a \\
& = (1-t)^{2/3} |Y_t^a| + \frac{2}{3} \int_0^t |Y_s^a| (1-s)^{-1/3} ds - \int_0^t (1-s)^{2/3} \mathrm{sgn}(Y_s^a) dY_s^a \\
& = \int_0^t (1-s)^{2/3} d|Y_s^a| - \int_0^t (1-s)^{2/3} \mathrm{sgn}(Y_s^a) dY_s^a,
\end{align*}
by integration by parts.  But we also have $K_t^a = |Y_t^a| - \int_0^t \mathrm{sgn}(Y_s^a) dY_s^a$ and so the last line is equal to $\int_0^t (1-s)^{2/3} dK_s^a$, which is $L_t^a$.  It follows that $\int_0^t \I{X_s^a > 0} dL_s^a = 0$.  Hence, $(X^a, L^a)$ solves the reflected SDE.

For $a = 0$, we have
\[
X_t^0 = (1-t)^{2/3} \int_0^t \frac{dB_u}{(1-u)^{2/3}} - (1-t)^{2/3} \inf_{0 \le s \le t} \int_0^s \frac{dB_u}{(1-u)^{2/3}}
\]
and so
\[
(X_t^0, 0 \le t < 1) \equidist \left( (1-t)^{2/3} \left| \int_0^t \frac{dB_u}{(1-u)^{2/3}} \right|, 0 \le t < 1 \right).
\]
(See, for example, Exercise 1.3.1 of \cite{Pilipenko}.)

Let first show that $\lim_{t\to{1}}X^0_t=0$. By the Dubins-Schwarz theorem we have
\begin{equation} \label{eq Dubins Sshwarz appliacation}
\left({\int_{0}^{t}{\frac{dB_s}{(1-s)^{2/3}}}},0\leq{t}<1\right) \equidist \left({B_{f(t)}},0\leq{t}<1\right),
\end{equation}
where $f(t)=3(1-t)^{-1/3}-3$ is the quadratic variation of the process on the left-hand side of (\ref{eq Dubins Sshwarz appliacation}). Recall that
\[
(B_t, t \ge 0)  \equidist (t B_{{1}/{t}}, t \ge 0).
\]
Thus we have
\[
\left(X^0_t, 0\leq{t}<1\right) \equidist \left((1-t)^{2/3} f(t) \cdot |B_{1/f(t)}|, 0\leq{t}<1\right).
\]
But then
\[
(1-t)^{2/3}{f(t)}\cdot |B_{1/f(t)}|=3\left((1-t)^{1/3}-(1-t)^{2/3}\right) \left| B_{\frac{(1-t)^{1/3}}{3(1-(1-t)^{1/3})}} \right| \to{0}
\]
almost surely as $t\to{1}$.  It follows that $\lim_{t \to 1} X_t^0 = 0$ almost surely.  Moreover,
\[
\int_{0}^{1}{\frac{X_t^0}{3(1-t)}dt} \equidist \frac{1}{3}\int_{0}^{1}{(1-t)^{-1/3}\vert{B_{f(t)}}\vert}dt.
\]
The change of variables $u=f(t)$ yields that the right-hand side is equal to 
\[
9 \int_{0}^{1}{\frac{\vert{B_u}\vert}{(u+3)^{3}}}du.
\]
Since $B_u=_{d}{\sqrt{u}B_1}$ and $\E{\vert{B_u}\vert}=\sqrt{\frac{2u}{\pi}}$ we have
\[
\E{\int_{0}^{1}{\frac{X_t^0}{3(1-t)}dt}}=9{\sqrt{\frac{2}{\pi}}}{\int_{0}^{1}{\frac{\sqrt{u}}{(u+3)^{3}}}}du<\infty,
\]
and so
\[
 \int_{0}^{1}{\frac{X_t^0}{3(1-t)}dt} <\infty \text{ a.s.}
\]
Since we have
\[
X_t^0 = -2 \int_0^t \frac{X_s^0}{3(1-s)} ds + B_t + L_t^0,
\]
and $X_t^0$, $\int_{0}^{t}{\frac{X_s^0}{3(1-s)}dt}$ and $B_t$ all possess finite almost sure limits as $t \to 1$, the same must also be true of $L_t^0$.

To obtain the almost sure finiteness statements for a general $a \ge 0$, note that if we build $X^0$ and $X^a$ from the same Brownian motion then
\begin{equation} \label{eqn:bounds}
0 \le X_t^a \le a(1-t)^{2/3} + X_t^0.
\end{equation}
It follows that $\lim_{t \to 1} X_t^a = 0$ almost surely.  We also get
\[
\int_0^1 \frac{X^a_s}{3(1-s)} ds \le \frac{a}{2} + \int_0^1 \frac{X^0_s}{3(1-s)} ds < \infty
\]
almost surely.  Finally, by the same argument as in the $a=0$ case, we must then also have that $L_1^a := \lim_{t \to 1} L_t^a < \infty$ almost surely.

We now turn to the final statement. By (\ref{eqn:bounds}), it is sufficient to show the result for $a = 0$.  For any $0 \le t < 1$, we have
\begin{align*}
\Prob{\frac{X_t^0}{\sqrt{1-t}} > r} & = \Prob{(1-t)^{1/6} |B_{f(t)}| >r} \\
& = \Prob{|\mathrm{N}(0, 3 -3(1-t)^{1/3})| > r} \\
& \le \Prob{|\mathrm{N}(0, 1)| > r/\sqrt{3}} \le e^{-r^2/6},
\end{align*}
by standard Gaussian tail bounds.
\end{proof}

\subsection{The invariance principle} \label{Sect Invariance principle}

Theorem \ref{thm fluid limit for X for omega>>sqrt n} gives that for $\alpha(n)\gg\sqrt{n}$ the number of fires rescaled by $\alpha(n)$ possesses a fluid limit. This arises because the dominant contribution to the number of fires comes from connecting to nodes of degree $4$.  If we had no vertices of degree 4, we would connect only to vertices of degree 3, and the number of fires would make jumps of $+1$ or $-1$ only, each with probability $1/2$.  This suggests that $X^n$ should behave like a simple symmetric random walk.  If $\alpha(n) \sim a\sqrt{n}$, $a>0$ this random walk acquires a positive drift.

Recall that for $0\leq{i}\leq{\zeta^{n}}$, we have
\begin{equation*}
L^{n}(i)=2 \sum_{j=0}^{i-1}{\I{X^{n}(j)=0}}.
\end{equation*}
We will prove the following scaling limit.
\begin{thm} \label{Thm Convergence up to 1-epsilon}
Fix $t < 1$ and suppose $\alpha(n)/\sqrt{n}\to{a}$ for $a\geq{0}$. Then
\begin{equation*}
\left(\frac{X^{n}(\fl{ns})}{\sqrt{n}},\frac{L^{n}(\fl{ns})}{\sqrt{n}}, 0\le s \le t \right)\convdist{\left({X^{a}_s, L^{a}_s,0\le s \le t}\right)},
\end{equation*}
uniformly, where ${\left({X^{a}_s, L^{a}_s,0\le s \le 1}\right)}$ is the process from Proposition \ref{Prop sistem 1}.
\end{thm}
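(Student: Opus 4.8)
The plan is to run a standard invariance-principle argument for a reflecting Markov chain: set up the Doob decomposition of $X^n(\fl{n\cdot})/\sqrt n$, isolate the reflection term, prove a martingale functional CLT for the noise, establish tightness, characterise every subsequential limit as a solution of the reflected SDE of Proposition~\ref{Prop sistem 1}, and conclude by uniqueness.

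First, using (\ref{eq exp changes X}), write $\E{X^n(j+1)-X^n(j)\mid\mathcal F^n_j}=2\,\I{X^n(j)=0}+g^n(j)$, where $g^n(j)=\tfrac{2V^n(j)-2X^n(j)+2\I{X^n(j)>0}}{3U^n(j)+4V^n(j)+X^n(j)-\I{X^n(j)>0}}$ is the interior drift. With $M^n_X(i)=X^n(i)-\sum_{j=0}^{i-1}\E{X^n(j+1)-X^n(j)\mid\mathcal F^n_j}$ a martingale, and writing $\tilde X^n(s)=X^n(\fl{ns})/\sqrt n$, $\tilde L^n(s)=L^n(\fl{ns})/\sqrt n$, $\tilde M^n(s)=M^n_X(\fl{ns})/\sqrt n$, one gets the exact identity $\tilde X^n(s)=\tilde M^n(s)+\tilde L^n(s)+\tfrac1{\sqrt n}\sum_{j=0}^{\fl{ns}-1}g^n(j)$. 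Using the fluid limits for $U^n$ and $V^n$ (Lemma~\ref{Lem V3 fluid limit}), the bound $U^n(i)\ge n-i$, the $L^2$-boundedness of $\tfrac1{\sqrt n}\sup_{i\le\zeta_n}X^n(i)$ (Lemma~\ref{Lem X is little o of n}) and $\alpha(n)=O(\sqrt n)$, I would show that uniformly on $[0,t]$
\[
\frac1{\sqrt n}\sum_{j=0}^{\fl{ns}-1}g^n(j)=\int_0^s\Big(\tfrac23 a(1-u)^{1/3}-\tfrac{2\tilde X^n(u)}{3(1-u)}\Big)du+o_p(1),
\]
the point being that $g^n(j)\approx\tfrac1{\sqrt n}\big(\tfrac23 a(1-j/n)^{1/3}-\tfrac{2\tilde X^n(j/n)}{3(1-j/n)}\big)$ with all errors uniformly negligible because $1/(1-u)$ is bounded on $[0,t]$. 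For the noise, (\ref{eqn:squareincrX}) gives $\var{X^n(j+1)-X^n(j)\mid\mathcal F^n_j}=1+o_p(1)$ uniformly (the correction being $O((\alpha(n)+\sup_i X^n(i))/n)$ in both the $X^n(j)=0$ and $X^n(j)>0$ cases), so the predictable bracket of $\tilde M^n$ at time $s$ converges uniformly to $s$ while the jumps of $\tilde M^n$ are $O(1/\sqrt n)$; the martingale functional CLT (Theorem~7.1.4 of \cite{EthierKurtz}) then gives $\tilde M^n\convdist B$, a standard Brownian motion.

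Next I would prove tightness of $(\tilde M^n,\tilde X^n,\tilde L^n)$ in $\mathbb D([0,t],\R^3)$, with continuous subsequential limits since all jumps are $O(1/\sqrt n)$. Set $\psi^n:=\tilde X^n-\tilde L^n=\tilde M^n+\int_0^\cdot(\cdots)du+o_p(1)$. Because $\tilde X^n\ge0$, $\tilde L^n$ is non-decreasing with $\tilde L^n(0)=0$, and $\tilde L^n$ only increases at times where the left limit of $\tilde X^n$ is $0$, a short argument shows $\tilde L^n(s)=\sup_{u\le s}(-\psi^n(u))^+ + O(1/\sqrt n)$ uniformly — the error coming precisely from the fact that $X^n$ jumps by $1$, $3$ or $4$ off the boundary rather than reflecting continuously. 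The integral term in $\psi^n$ is $C$-tight because its modulus of continuity over intervals of length $\delta$ is at most $\big(\tfrac23 a+\tfrac{2}{3(1-t)}\sup_{s\le t}\tilde X^n(s)\big)\delta$ with $\sup_{s\le t}\tilde X^n(s)=O_p(1)$ (an a priori estimate from Lemma~\ref{Lem X is little o of n}, so there is no circularity); together with tightness of $\tilde M^n$ this makes $\psi^n$ tight, hence so are $\tilde L^n$ and $\tilde X^n=\psi^n+\tilde L^n$ by Lipschitz continuity of the Skorokhod reflection map.

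Finally I would identify the limit. Along a convergent subsequence, use the Skorokhod representation theorem to pass to almost sure uniform limits $(B,X,L)$; the identity above together with bounded convergence (continuity of $X$, boundedness of $1/(1-u)$ on $[0,t]$) gives $X_s=B_s+\int_0^s(\tfrac23 a(1-u)^{1/3}-\tfrac{2X_u}{3(1-u)})du+L_s$ with $X_s\ge0$ and $L$ non-decreasing, $L_0=0$. For $\int_0^s\I{X_u>0}dL_u=0$: for each $\epsilon>0$, $\tilde L^n$ only increases where $\tilde X^n$ is within $4/\sqrt n$ of $0$, so $\int_0^t\I{\tilde X^n(u)\ge\epsilon}d\tilde L^n(u)=0$ for large $n$; pass to the limit (perturbing $\epsilon$ slightly so that $X$ does not dwell on that level) to get $\int_0^t\I{X_u\ge\epsilon}dL_u=0$, then let $\epsilon\downarrow0$. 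Thus $(X,L)$ solves the reflected SDE of Proposition~\ref{Prop sistem 1}, whose solution is unique, so the whole sequence converges to $(X^a,L^a)$, uniformly since the limit is continuous. The main obstacle I expect is the rigorous treatment of the reflection: $\tilde L^n$ is not an exact Skorokhod regulator (it is generated by jumps off $0$), \emph{and} the interior drift is self-referential, depending on $\tilde X^n$, so one cannot simply apply continuity of the Skorokhod map to an externally prescribed driving path but must go through tightness plus uniqueness; the delicate points are controlling the $\tilde L^n$-versus-$\sup(-\psi^n)^+$ discrepancy uniformly and transferring the boundary condition to the limit. The time-inhomogeneity with its singularity at $u=1$ is exactly why the theorem is restricted to $[0,t]$ with $t<1$ — on such intervals the coefficients $1/(1-u)$ and $(1-u)^{1/3}$ are bounded, causing no extra trouble here (the range near $1$ is handled separately in Section~\ref{sec:end}).
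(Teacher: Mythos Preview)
Your proposal is correct and takes essentially the same approach as the paper. The paper packages the argument as a general invariance principle for reflected chains (Theorem~\ref{Theorem Invariance principle}) and then simply checks its hypotheses (chiefly the drift approximation (f) and the bracket approximation (g), via Lemma~\ref{Lem expected increments of X}); the proof of that general theorem in the appendix follows exactly your tightness-plus-identification scheme, citing Kang--Williams~\cite{KangWilliams} for tightness, whereas you obtain tightness more directly via the approximate Skorokhod map $\tilde L^n(s)=\sup_{u\le s}(-\psi^n(u))^++O(1/\sqrt n)$ together with the a~priori bound on $\sup_{s\le t}\tilde X^n(s)$ from Lemma~\ref{Lem X is little o of n}.
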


We will deduce Theorem~\ref{Thm Convergence up to 1-epsilon} from the following general invariance principle for reflected diffusions. Such invariance principles go back to Stroock and Varadhan~\cite{StroockVaradhan}, and we do not believe that this result is novel.  But since we could not find a version in the existing literature adapted to our particular setting, we give a proof in the appendix, using ideas from Ethier and Kurtz~\cite{EthierKurtz} and Kang and Williams~\cite{KangWilliams}.

\begin{thm} \label{Theorem Invariance principle}
Let $q: \R_+ \times \R_+ \to \R_+$, $b: \R_+ \times \R_+ \to \R$ satisfy the following conditions:
\begin{itemize}
\item[(1)] (Lipschitz condition)
\begin{equation*}
\exists{K>0}\quad \forall{t \ge 0}\quad \forall{y_{1},y_{2}\in{\R_+}}: \vert{q(t,y_{1})-q(t,y_{2})}\vert+\vert{b(t,y_{1})-b(t,y_{2})}\vert\leq{K\vert{y_{1}-y_{2}}\vert};
\end{equation*}
\item[(2)] (linear growth condition)
\begin{equation*}
\exists{C>0} \quad \forall{t\geq{0}} \quad \forall{x\in{\R_+}}: \vert{q(t,y)}\vert+\vert{b(t,y)}\vert\leq{C(1+\vert{y}\vert)}.
\end{equation*}
\end{itemize}
Let  $Y^n, B^n$ be $\mathbb{D}(\R_+, \R)$-valued stochastic processes and let $Q^n, L^n$ be increasing $\mathbb{D}(\R_+, \R)$-valued stochastic processes.  Let $\mathcal{F}_t^n = \sigma(Y^n(s), L^n(s), Q^n(s), B^n(s): s \le t)$ and let $\tau^n(r) = \inf\{t \ge 0: |Y^n(t)| \ge r \text{ or } |Y^n(t-)| \ge r\}$.
Suppose that
\begin{enumerate}
\item[(a)] $M^n = Y^n - B^n$ is an $(\mathcal{F}^n_t)$-local martingale,
\item[(b)] $(M^n)^2 - Q^n$ is an $(\mathcal{F}^n_t)$-local martingale,
\end{enumerate}
and that for all $r > 0$ and $T > 0$,
\begin{enumerate}
\item[(c)] $\lim_{n \to \infty} \E{\sup_{t \le T \wedge \tau^n(r)} | Y^n(t) - Y^n(t-)|^2} = 0$,
\item[(d)] $\lim_{n \to \infty} \E{\sup_{t \le T \wedge \tau^n(r)} | B^n(t) - B^n(t-)|^2} = 0$,
\item[(e)] $\lim_{n \to \infty} \E{\sup_{t \le T \wedge \tau^n(r)} | Q^n(t) - Q^n(t-)|} = 0$,
\item[(f)] $\sup_{t \le T \wedge \tau^n(r)} \left| B^n(t) - L^n(t) - \int_0^t b(s,Y^n(s)) ds \right| \convprob 0$ as $n \to \infty$,
\item[(g)] $\sup_{t \le T \wedge \tau^n(r)} \left| Q^n(t) - \int_0^t q(s,Y^n(s)) ds \right| \convprob 0$ as $n \to \infty$,
\item[(h)] $Y^n(0) \convprob 0$ as $n \to \infty$,
\item[(i)] there exists a sequence $(\delta_n)_{n \ge 1}$ of constants with $\delta_n \to 0$ such that $L^n(0) = 0$, $L^n(t) - L^n(t-) \le \delta_n$ and $L^n(t) = \int_0^t \I{Y^n(s) \le \delta_n} d L^n(s)$.
\end{enumerate}
Then $(Y^n, L^n) \convdist (Y,L)$ in the Skorokhod topology as $n \to \infty$, where $(Y,L)$ is the unique solution to the reflected SDE specified by $Y_0 = L_0 = 0$, $L$ is non-decreasing, and
\[
dY_t = b(t,Y_t) dt + \sigma(t, Y_t) dW_t + dL_t,
\]
where $\int_0^t \I{Y_s > 0} dL_s=0$, $W$ is a standard Brownian motion and $\sigma(t,y) = \sqrt{q(t,y)}$.
\end{thm}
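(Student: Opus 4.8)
The plan is the classical three-step route for invariance principles for reflected diffusions: localize and prove uniform moment bounds; establish tightness of the triple $(Y^n, L^n, M^n)$ with $M^n = Y^n - B^n$; and identify every subsequential limit as \emph{the} solution of the stated reflected SDE, uniqueness of which then forces convergence of the whole sequence. For the localization, fix $r, T > 0$ and stop all processes at $T \wedge \tau^n(r)$. Since $M^n$ is a local martingale with predictable quadratic variation $Q^n$, Doob's $L^2$-inequality together with (e), (g) and the linear-growth bound on $q$ gives a bound on $\E{\sup_{t \le T \wedge \tau^n(r)} M^n(t)^2}$ that is uniform in $n$; feeding this back through the identity $Y^n = M^n + (B^n - L^n) + L^n$ and (f) yields uniform $L^2$-bounds on $L^n(T \wedge \tau^n(r))$ and hence on $\sup_{t \le T \wedge \tau^n(r)} |Y^n(t)|$. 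In particular $\sup_n \Prob{\tau^n(r) \le T} \to 0$ as $r \to \infty$, so it suffices to prove convergence of the stopped processes for each fixed $r$ and then let $r \to \infty$.

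For tightness I would handle the three summands of $Y^n = M^n + (B^n - L^n) + L^n$ in turn. By the martingale functional central limit theorem (Theorem 1.4, Section 7.1 of \cite{EthierKurtz}), conditions (c) and (e) make the jumps of $M^n$ vanish and (g) together with the continuity of $q$ pins down its quadratic variation, so $M^n$ is tight with only continuous limit points. By (d) and (f), $B^n - L^n$ stays uniformly close to $\int_0^{\cdot} b(s, Y^n(s))\,ds$, which has uniformly bounded Lipschitz constant on the localized interval and so is tight with continuous limit points. Finally $L^n$ is increasing, with total variation $L^n(T \wedge \tau^n(r))$ bounded in $L^2$ by the previous step and jumps at most $\delta_n \to 0$, so it is tight with continuous limit points as well. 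Hence $(Y^n, M^n, L^n)$ is tight, and any subsequential limit $(Y, M, L)$ is continuous with $Y_0 = L_0 = 0$ and $L$ non-decreasing.

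Along a convergent subsequence, the uniform $L^2$-bounds supply enough uniform integrability to conclude that $M$ is a continuous local martingale with $\langle M \rangle_t = \int_0^t q(s, Y_s)\,ds$, so that $M_t = \int_0^t \sigma(s, Y_s)\,dW_s$ for a Brownian motion $W$ on a possibly enlarged probability space, where $\sigma = \sqrt{q}$; passing to the limit in $Y^n = M^n + (B^n - L^n) + L^n$ via (f) then gives $dY_t = b(t, Y_t)\,dt + \sigma(t, Y_t)\,dW_t + dL_t$. It remains to verify the Skorokhod conditions: nonnegativity $Y_t \ge 0$ is inherited from that of $Y^n$, and for the complementarity relation one fixes $\epsilon > 0$ and a continuous $g \colon \R_+ \to [0,1]$ vanishing on $[0, \epsilon]$; for $n$ large enough that $\delta_n < \epsilon$, condition (i) forces $\int_0^t g(Y^n(s))\,dL^n(s) = 0$, and since both $(Y^n, L^n)$ and its limit $(Y, L)$ are continuous, the continuous mapping theorem lets us pass to the limit to get $\int_0^t g(Y_s)\,dL_s = 0$; letting $g$ increase to the indicator of $(\epsilon, \infty)$ and then $\epsilon \downarrow 0$ yields $\int_0^t \I{Y_s > 0}\,dL_s = 0$.

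Finally, the Lipschitz and linear-growth hypotheses guarantee existence and pathwise uniqueness for the reflected SDE with $Y_0 = L_0 = 0$: existence is provided by the limit just constructed, and pathwise uniqueness follows from a Yamada--Watanabe argument ($b$ is Lipschitz and $\sigma = \sqrt q$ is $\tfrac{1}{2}$-H\"older continuous since $q$ is Lipschitz), adapted to the reflected setting via the Skorokhod map; compare Theorem 2.1.1 of \cite{Pilipenko} and the scheme of \cite{KangWilliams}. Hence every subsequential limit has the same law, the stopped sequences converge, and, the limiting SDE being non-explosive by linear growth, letting $r \to \infty$ gives $(Y^n, L^n) \convdist (Y, L)$ on all of $\R_+$. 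I expect the two genuinely delicate points to be the uniform control of the total variation of the reflection terms $L^n$ (on which tightness of that coordinate depends) and the passage to the limit in the complementarity condition across the discontinuity of the map $y \mapsto \I{y > 0}$; the bookkeeping around the localizing times $\tau^n(r)$ is routine but should be carried out carefully.
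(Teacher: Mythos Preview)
Your proposal is correct and follows essentially the same route as the paper: localize via $\tau^n(r)$, prove tightness of $(Y^n,M^n,L^n)$ with continuous limit points, identify any subsequential limit as a solution of the stopped reflected SDE (via the martingale problem for $M$ and its quadratic variation, then representing $M$ as a stochastic integral against a Brownian motion), and conclude by pathwise uniqueness of the reflected SDE and non-explosion. The paper's version is terser because it delegates both the tightness step and the verification of the Skorokhod conditions for the limit to Theorem~4.2 of Kang--Williams~\cite{KangWilliams} (and Chapter~7 of Ethier--Kurtz for the martingale-problem passage), whereas you sketch these arguments by hand; your explicit treatment of the complementarity condition via a continuous minorant $g$ of $\I{\,\cdot\, > \epsilon}$ is more informative than the paper's bare ``by (i)''. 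One organizational difference: you aim to show $\sup_n \Prob{\tau^n(r)\le T}\to 0$ as $r\to\infty$ directly from a priori $L^2$ bounds, while the paper instead establishes convergence of the stopped processes first and removes the localization only at the end, using that the \emph{limit} $Y$ has continuous paths so that $\tau(r)\to\infty$; the latter is cleaner, since the a priori bound on $\sup_{t\le T\wedge\tau^n(r)}|Y^n(t)|^2$ inevitably grows with $r$, making the direct route slightly more delicate than you indicate. Finally, note that the hypotheses do not literally assert $Y^n\ge 0$ (they are $\mathbb{D}(\R_+,\R)$-valued), so ``nonnegativity $Y_t\ge 0$ is inherited from that of $Y^n$'' needs a word of justification in context; in the paper's application $Y^n=\tilde X^n\ge 0$ by construction, and the domains of $b$ and $q$ tacitly presuppose it.
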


\begin{proof}[Proof of Theorem \ref{Thm Convergence up to 1-epsilon}]
Define
\[
\tilde{X}^{n}(s)=\frac{{X}^{n}(\lfloor{ns}\rfloor)}{\sqrt{n}} \quad \text{and} \quad \tilde{L}^{n}(s) =\frac{{L}^{n}(\lfloor{ns}\rfloor)}{\sqrt{n}}.
\]
Recall from Proposition \ref{Prop sistem 1} that ${\left({X^{a}_s, L^{a}_s,0\le s \le t}\right)}$ is the solution to the reflected SDE
\begin{equation*} 
dX_s^a=-\frac{2}{3}{\frac{X_s^a}{(1-s)}}ds+\frac{2a}{3}(1 - s)^{1/3} ds + dB_s+dL_s^a.
\end{equation*}
For $0\leq s \leq t$ and $x\geq{0}$, let $q(s,x)=1$ and $b(s,x)=-\frac{2}{3}{\frac{x}{(1-s)}}+\frac{2a}{3}(1-s)^{1/3}$. It follows straightforwardly that for $0\leq s \leq t$, the functions $q$ and $b$ satisfy the Lipschitz and linear growth conditions (1) and (2) in Theorem \ref{Theorem Invariance principle}. 

For $i\leq{\fl{nt}}$, let
\[
{B}^{n}(i) = \sum_{j=0}^{i-1} \E{X^n(j+1) - X^n(j) | \mathcal{F}^n_j} ,
\]
and
$$M^n_{X}(i) := X^n(i) - B^n(i),$$
as well as
\begin{align*}
Q^{n}(i) & = \sum_{j=0}^{i-1} \E{M_X^n(j+1)^2 - M_X^n(j)^2 | \mathcal{F}^n_j}  = \sum_{j=0}^{i-1}  \E{({{M}_X^{n}(j+1)}-M_X^n(j))^{2}\vert{\mathcal{F}^{n}_{j}}} \\
&= \sum_{j=0}^{i-1} \E{({X}^{n}(j+1)-{{X}^{n}(j)})^{2}\vert{{\mathcal{F}^{n}_{i}}}}-\E{{X}^{n}(j+1)-X^{n}(j)\vert{{\mathcal{F}^{n}_{j}}}}^{2}.
\end{align*}
Since everything is bounded, $M_X^n$ and $(M_X^n)^2 - Q^n$ are both $(\mathcal{F}_i^n)$-martingales.  Let 
\[
\tilde{M}(s)=\frac{M_X^n(\fl{ns})}{\sqrt{n}}, \quad \tilde{B}^n(s) = \frac{B^n(\fl{ns})}{\sqrt{n}} \quad \text{and} \quad \tilde{Q}^{n}(s)=\frac{Q^{n}(\fl{ns})}{{n}}
\]
be the appropriately rescaled versions of these quantities.  Then (a) and (b) hold by construction and the main task facing us is to check that conditions (f) and (g) of the Theorem are fulfilled for $\tilde{M}^n$ and $(\tilde{M}^n)^2 - \tilde{Q}^n$.

By Lemma~\ref{Lem expected increments of X},
\begin{equation*}
\begin{aligned}
B^n(i) & =2 \sum_{j=0}^{i-1} \I{X^n(j)=0} + \sum_{j=0}^{i-1} \frac{2\alpha(n)(1-j/n)^{4/3}-2X^n(j)}{3(n-j)}+ \sum_{j=0}^{i-1} E_{X,1}^{n}(j) \\
& = L^n(i) + \sum_{j=0}^{i-1} \frac{2\alpha(n)(1-j/n)^{4/3}-2X^n(j)}{3(n-j)}+ \sum_{j=0}^{i-1} E_{X,1}^{n}(j). 
\end{aligned}
\end{equation*}
Then
\begin{align*}
& \sup_{0\leq s \le t}\left| \tilde{B}^{n}(s)-\tilde{L}^{n}(s)-\int_{0}^{s}\left(\frac{2a}{3}(1-u)^{1/3} - \frac{2\tilde{X}^{n}(u)}{(1-u)} \right)du \right|  \\
& \le \sup_{0\le s \le t}\left| \sum_{j=0}^{\fl{ns} -1} \frac{1}{n} \left( \frac{2\alpha(n)}{3\sqrt{n}}(1 - j/n)^{1/3} - \frac{2\tilde{X}^{n}(j/n)}{3(1-j/n)}\right) -\int_{0}^{s}\left( \frac{2a}{3}(1-u)^{1/3}-\frac{2\tilde{X}^{n}(u)}{3(1-u)} \right) du\right| \\
& \qquad + \sup_{0\le i \le \fl{nt}}\sqrt{n} |E_{X,1}^{n}(i)|,
\end{align*}
which converges to 0 in probability as $n \to \infty$ by Lemma~\ref{Lem expected increments of X}.  Hence, condition (f) holds.

By Lemma~\ref{Lem expected increments of X},
\begin{align*}
Q^n(i) & = \sum_{j=0}^{i-1} \left(1+E_{X,2}^{n}(j) -4\I{X^{n}(j)=0} \left(\frac{2\alpha(n)(1-j/n)^{4/3}-2X^{n}(j)}{3(n-j)}+E_{X,1}^{n}(j) \right) \right. \\
& \qquad \qquad \left. -\left(\frac{2\alpha(n)(1-j/n)^{4/3}-2X^{n}(j)}{3(n-j)}+E_{X,1}^{n}(j) \right)^{2} \right) \\
& = i + \sum_{j=0}^{i-1}\left(E_{X,2}^{n}(j)  -4\I{X^{n}(j)=0} \left( \frac{2\alpha(n)(1-j/n)^{4/3}-2X^{n}(j)}{3(n-j)}+E_{X,1}^{n}(j) \right) \right. \\
& \qquad \qquad \left. -\left(\frac{2\alpha(n)(1-j/n)^{4/3}-2X^{n}(j)}{3(n-j)} + E_{X,1}^{n}(j) \right)^{2} \right).
\end{align*}
Then
\begin{align*}
&\sup_{0\leq s \leq t} \left| \tilde{Q}^{n}(s)-s \right| \\
&=\sup_{0\leq s \leq t} \Bigg| \frac{\fl{ns}}{n}-s+\sum_{j=0}^{\fl{ns}-1} \Bigg( \frac{E_{X,2}^{n}(j)}{n}
- \left(\frac{2(\alpha(n)/\sqrt{n})(1-j/n)^{4/3}-2\tilde{X}(j/n)}{3(n-j)}+\frac{E_{X,1}^{n}(j)}{\sqrt{n}}\right)^{2} \\
& \qquad \qquad \qquad \qquad  -\frac{4}{\sqrt{n}} \I{\tilde{X}^{n}(j/n)=0} \left( \frac{2(\alpha(n)/\sqrt{n})(1-j/n)^{4/3}-2\tilde{X}(j/n)}{3(n-j)}+\frac{E_{X,1}^{n}(j)}{\sqrt{n}}\right) \Bigg)\Bigg|.
\end{align*}
Now, for large enough $n$ we have
\[
\begin{aligned}
& \left| \frac{2(\alpha(n)/\sqrt{n})(1-j/n)^{4/3}-2\tilde{X}(j/n)}{3(n-j)}+\frac{E_{X,1}^{n}(j)}{\sqrt{n}} \right| \\
& \qquad  \le \frac{1}{n} \frac{4a}{3(1-t)^{1/3}} + \frac{1}{n} \frac{2}{3(1-t)} \sup_{0 \le s \le t} \tilde{X}^n(s) + \frac{\sup_{0 \le i \le \fl{nt}} |E_{X,1}^n(i)|}{\sqrt{n}}.
\end{aligned}
\]
Since $\sup_{0\leq{i}\leq{\lfloor{nt}\rfloor}}{|E_{X,2}^{n}(i)|}\convprob{0}$, $\sup_{0 \le i \le \fl{nt}} |E_{X,1}^n(i)| \convprob 0$ and $\sup_{0 \le s \le t} \tilde{X}^n(s)$ is tight, it follows that
\[
\sup_{0\leq s \leq t} \left| \tilde{Q}^{n}(s)-s \right| \convprob 0
\]
as $n \to \infty$, and so we have that condition (g) of Theorem \ref{Theorem Invariance principle} is fulfilled.

Since the increments of $X^n$ are bounded, conditions (c), (d) and (e) of Theorem \ref{Theorem Invariance principle} are trivially satisfied. As $\tilde{X}^{n}(0)=0$, condition (h) is satisfied. Finally, note that condition (i) is fulfilled if we take $\delta_n = \frac{2}{\sqrt{n}}$, since $\tilde{L}^n(s) - \tilde{L}^n(s-) \le \frac{2}{\sqrt{n}}$ and
\[
\int_0^s \I{\tilde{X}^n(u) \le \frac{2}{\sqrt{n}}} dL^n(u) = 2 \sum_{i=0}^{\fl{ns}-1} \I{\tilde{X}^n(i) = 0} = L^n(s).
\]
Applying Theorem \ref{Theorem Invariance principle} completes the proof.
 \end{proof}

Finally, we are ready to prove the convergence of the suitably rescaled number of clashes we encounter up to a fixed time $t < 1$.

\begin{lem} \label{Lem X N joint convergence up to 1-epsilon}
For fixed $0 < t < 1$ we have
\[
\left(\frac{N^{n}(\lfloor{ns}\rfloor)}{\sqrt{n}}, 0\leq s \leq t \right) \convdist  \left(N_s^a,  0\leq s \leq t
\right)\]
uniformly as $n \to \infty$.
\end{lem}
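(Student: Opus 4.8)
The plan is to run the by-now-standard martingale-plus-drift decomposition for $N^n$, using Lemma~\ref{Lem Exp increments N} for the conditional increments, and then to recognise the drift as (essentially) a continuous functional of $X^n$, whose scaling limit is already identified by Theorem~\ref{Thm Convergence up to 1-epsilon}. Write $\tilde{N}^n(s) = N^n(\fl{ns})/\sqrt{n}$, and similarly $\tilde{X}^n$, $\tilde{L}^n$. For $i \ge 0$ let
\[
M^n_N(i) = N^n(i) - \sum_{j=0}^{i-1} \E{N^n(j+1) - N^n(j) | \mathcal{F}^n_j},
\]
a martingale, and recall from Lemma~\ref{Lem Exp increments N} that $\E{N^n(j+1) - N^n(j) | \mathcal{F}^n_j} = \frac{X^n(j)}{3(n-j)} + E_N^n(j)$, where (since $\alpha(n) = O(\sqrt{n})$) $\sqrt{n}\, \sup_{j \le \fl{nt}} |E_N^n(j)| \to 0$ in $L^2$. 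Hence, for $0 \le s \le t$,
\[
\tilde{N}^n(s) = \frac{M^n_N(\fl{ns})}{\sqrt{n}} + \sum_{j=0}^{\fl{ns}-1} \frac{1}{n}\, \frac{\tilde{X}^n(j/n)}{3(1-j/n)} + \frac{1}{\sqrt{n}} \sum_{j=0}^{\fl{ns}-1} E_N^n(j).
\]

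First I would show the first and third terms vanish uniformly. Because the increment of $N^n$ is an indicator, $\E{(N^n(j+1) - N^n(j))^2 | \mathcal{F}^n_j} = \E{N^n(j+1) - N^n(j) | \mathcal{F}^n_j}$, so $\E{M^n_N(\fl{nt})^2} \le \E{N^n(\fl{nt})}$; by the increment formula this is at most $\frac{t}{3(1-t)} \E{\sup_{i \le \fl{nt}} X^n(i)} + nt\, \E{\sup_{j \le \fl{nt}} |E_N^n(j)|}$, and by Lemma~\ref{Lem X is little o of n} (together with $\alpha(n) = O(\sqrt{n})$) the first term is $O(\sqrt{n})$, while $\E{\sup_j |E_N^n(j)|} = o(1/\sqrt{n})$ makes the second $o(\sqrt{n})$ too. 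Thus $\E{M^n_N(\fl{nt})^2} = o(n)$, and Doob's $L^2$-inequality gives $\frac{1}{\sqrt{n}} \sup_{0 \le s \le t} |M^n_N(\fl{ns})| \convprob 0$. The third term is bounded in absolute value by $\sqrt{n}\, t \sup_{j \le \fl{nt}} |E_N^n(j)| \to 0$ in $L^2$.

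It remains to handle the drift. Since $\tilde{X}^n$ is constant on each interval $[j/n, (j+1)/n)$, the Riemann sum differs from $\int_0^s \frac{\tilde{X}^n(u)}{3(1-u)}\, du$ by the sum of the per-block discrepancies $\frac{\tilde{X}^n(j/n)}{3}\bigl( \frac{1}{n(1-j/n)} - \int_{j/n}^{(j+1)/n} \frac{du}{1-u} \bigr)$, each of modulus at most $\frac{\tilde{X}^n(j/n)}{3} \cdot \frac{2}{n^2(1-t)^2}$ for $n$ large, plus the leftover integral over $[\fl{ns}/n, s]$, which is at most $\frac{\sup_{u \le t} \tilde{X}^n(u)}{3n(1-t)}$; both are $O(1/n)\sup_{u \le t} \tilde{X}^n(u)$ uniformly in $s \le t$, hence $o_p(1)$ since $\sup_{u \le t} \tilde{X}^n(u)$ is tight by Lemma~\ref{Lem X is little o of n}. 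Now, by Theorem~\ref{Thm Convergence up to 1-epsilon}, $(\tilde{X}^n, \tilde{L}^n) \convdist (X^a, L^a)$ uniformly on $[0,t]$; passing (by the Skorokhod representation theorem) to a probability space on which this holds almost surely, and noting that the limit is continuous so the convergence is almost surely uniform and the map $(x,\ell) \mapsto \bigl( x, \ell, \int_0^\cdot \frac{x(u)}{3(1-u)}\, du \bigr)$ is continuous for the uniform norm on $[0,t]$, we get $\bigl( \tilde{X}^n, \tilde{L}^n, \int_0^\cdot \frac{\tilde{X}^n(u)}{3(1-u)}\, du \bigr) \convdist (X^a, L^a, N^a)$. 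Combining with the two vanishing contributions via Slutsky's theorem yields $(\tilde{X}^n, \tilde{L}^n, \tilde{N}^n) \convdist (X^a, L^a, N^a)$ uniformly on $[0,t]$, which in particular gives the claimed marginal statement (and the joint form needed later).

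I do not expect a genuine obstacle: the only two points calling for care are the $L^2$ bound $\E{N^n(\fl{nt})} = O(\sqrt{n})$ controlling the martingale term — which is precisely where $\alpha(n) = O(\sqrt{n})$ enters, through Lemma~\ref{Lem X is little o of n} — and making the continuous-mapping step clean despite $\tilde{X}^n$ being only c\`adl\`ag, which is dispatched by the Skorokhod-representation remark above since the limiting process $X^a$ is continuous.
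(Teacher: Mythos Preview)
Your proposal is correct and follows essentially the same route as the paper: the martingale-plus-drift decomposition of $N^n$, the bound $\E{M_N^n(\fl{nt})^2}\le \E{N^n(\fl{nt})}=O(\sqrt{n})$ to kill the martingale part, the approximation of the drift by $\sum \frac{X^n(j)}{3(n-j)}$ via Lemma~\ref{Lem Exp increments N}, and then the continuous mapping argument from Theorem~\ref{Thm Convergence up to 1-epsilon}. If anything your write-up is slightly more careful (Doob's $L^2$ for the uniform control of the martingale term, and the Skorokhod-representation/continuity argument for the integral functional), and you explicitly record the joint convergence with $(\tilde X^n,\tilde L^n)$, which the paper needs later anyway.
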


\begin{proof}
The argument is standard, since $N^n$ is a counting process which is bounded by $(3n+4\alpha(n))/2$.  We have that
\[
M_N^n(k) = N^n(k) - \sum_{i=1}^{k-1} \E{N^n(i+1) - N^n(i) | \mathcal{F}_i^n}
\]
defines a martingale.  We also have
\begin{align*}
\E{M^n_N(k)^2} & = \E{\sum_{i=0}^{k-1} \mathrm{var} \left( N^n(i+1) - N^n(i) | \mathcal{F}_i^n \right) } \\
& \le \E{\sum_{i=0}^{k-1} \E{N^n(i+1) - N^n(i) | \mathcal{F}_i^n}} = \E{N^n(k)},
\end{align*}
where the inequality holds since $N^n(i+1) - N^n(i) \in \{0,1\}$.  Now,
\begin{equation} \label{eqn:NnUI}
\begin{aligned}
\frac{1}{\sqrt{n}} \E{N^n(\fl{nt})} & \le \frac{1}{\sqrt{n}} \sum_{i=1}^{\fl{nt}-1} \E{ \frac{3n + 2 X^n(i) (3n + 4 \alpha(n) + 2 X^n(i))}{2(3n(1-t) - 1) (3n(1-t) - 3)}} \\
& \le \frac{\sqrt{n} t \left(3n + 2 \E{\sup_{0 \le i \le \zeta_n} X^n(i)} (3n + 4 \alpha(n) + 2n + 2 \alpha(n))\right)}{2(3n(1-t) -1) (3n(1-t) - 3)} \le C
\end{aligned}
\end{equation}
for some constant $C$, uniformly in $n$.  In particular, $\E{N^n(\fl{nt})}/n \to 0$. Fix $\delta > 0$.  Using Markov's inequality, we obtain that
\[
\frac{M_N^n(\fl{nt})}{\sqrt{n}} \convprob 0,
\]
as $n \to \infty$.

Now, by Lemma~\ref{Lem Exp increments N} we have
\[
\sqrt{n} \sup_{0 \le i \le \fl{nt}} \left| \E{N^n(i+1) - N^n(i) | \mathcal{F}^n_i} - \frac{X^n(i)}{3(n-i)} \right|
\convprob 0
\]
and so
\[
\sup_{0\leq s \leq t}\frac{1}{\sqrt{n}} \left| \sum_{i=1}^{\fl{ns}-1} \E{N^n(i+1) - N^n(i) | \mathcal{F}_i^n} - \sum_{i=0}^{\fl{ns} - 1} \frac{X^n(i)}{3(n-i)} \right| \convprob 0.
\]
Finally, we have
\[
\frac{1}{\sqrt{n}} \sum_{i=0}^{\fl{ns} - 1} \frac{X^n(i)}{3(n-i)} = \frac{1}{\sqrt{n}} \int_0^{\fl{ns}/n} \frac{X^n(\fl{nu})}{3\left(1 - \frac{\fl{nu}}{n}\right)} du.
\]
But then using Theorem \ref{Thm Convergence up to 1-epsilon} and the continuous mapping theorem, we get
\[
 \frac{1}{\sqrt{n}} \int_0^{\fl{ns}/n} \frac{X^n(\fl{nu})} {3\left(1 - \frac{\fl{nu}}{n}\right)} du \convdist \int_0^{s} \frac{X^a_u}{3 (1-u)} du = N^a_s,
 \]
 uniformly for $0\leq s \leq t$.
 Hence, as $n \to \infty$
 \[
\frac{ N^n(\fl{ns})}{\sqrt{n}} \convdist N_s^a
 \]
uniformly for $0\leq s \le t$, as desired.
\end{proof}

%
%
\section{The end of the process} \label{sec:end}

We now understand the scaling behaviour of the process $(X^n(i), L^n(i), N^n(i))_{i \ge 0}$ on the time-interval $[0,\fl{n(1-\epsilon)}]$ for any $\epsilon > 0$.  It remains to get from $\fl{n(1-\epsilon)}$ to $\zeta_n$.

We first show that $V^n(n)$ is small and that $X^n(\fl{n(1-\epsilon)}), L^n(\fl{n(1-\epsilon)}), N^n(\fl{n(1-\epsilon)})$ are,  for small $\epsilon$, good approximations of $X^n(n), L^n(n), N^n(n)$, by careful use of the coupling from Section~\ref{sec:coupling}.

\begin{prop} \label{prop:1-epsto1}
For any $\delta > 0$,
\begin{enumerate}
\item[(a)] if $\alpha(n) \to \infty$ then $V^n(n)/\alpha(n) \convprob 0$ as $n\to \infty$,
\item[(b)] $\lim_{\epsilon \to 0} \limsup_{n \to \infty} \Prob{ \frac{1}{\alpha(n) \vee \sqrt{n}} \left( N^n(n) - N^n(\fl{n(1 - \epsilon)}) \right) > \delta} = 0,$
\item[(c)] $\lim_{\epsilon \to 0} \limsup_{n \to \infty} \Prob{ \frac{1}{\alpha(n) \vee \sqrt{n}} \sup_{\fl{n(1-\epsilon)} \le i \le n} X^n(i) > \delta} = 0$,
\item[(d)] $\lim_{\epsilon \to 0} \limsup_{n \to \infty} \Prob{ \frac{1}{\alpha(n) \vee \sqrt{n}} \left( L^n(n) - L^n(\fl{n(1 - \epsilon)}) \right) > \delta} = 0.$
\end{enumerate}
\end{prop}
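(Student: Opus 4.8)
The plan is to dispose of (a) at once and then obtain (c), (b), (d) in that order, using the coupling of Section~\ref{sec:coupling} restarted at time $\fl{n(1-\epsilon)}$ (and, for (b) and (d), at a whole dyadic family of times). Throughout we use that $U^n(i)\ge n-i$, so $\fl{n(1-\epsilon)}<\zeta_n$ for all large $n$, and hence that $V^n(n)\le V^n(\fl{n(1-\epsilon)})$ by monotonicity of $V^n$.

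\textbf{Part (a).} Given $\delta>0$, fix $\epsilon$ with $\epsilon^{4/3}<\delta$. Then $\Prob{V^n(n)/\alpha(n)>\delta}\le\Prob{V^n(\fl{n(1-\epsilon)})/\alpha(n)>\delta}$, which tends to $0$ by the fluid limit for $V^n$ in Lemma~\ref{Lem V3 fluid limit} (applicable since $\alpha(n)\to\infty$), as $V^n(\fl{n(1-\epsilon)})/\alpha(n)\convprob\epsilon^{4/3}$.

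\textbf{Part (c).} Re-initialise the stack decomposition of Section~\ref{sec:coupling} at time $\fl{n(1-\epsilon)}$, placing all burning half-edges in the first stack, so that $X^n_1(\fl{n(1-\epsilon)})=X^n(\fl{n(1-\epsilon)})$, $X^n_2(\fl{n(1-\epsilon)})=0$, and couple the first stack to a reflecting SSRW $Y^n$ with $Y^n(\fl{n(1-\epsilon)})=X^n(\fl{n(1-\epsilon)})\vee 2$. Since $X^n_2$ only increases when a vertex of degree $4$ passes on the fire, for $\fl{n(1-\epsilon)}\le i\le n$ we have $X^n(i)\le Y^n(i)+(V^n(\fl{n(1-\epsilon)})-V^n(i))\le Y^n(i)+V^n(\fl{n(1-\epsilon)})$. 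As in the proof of Lemma~\ref{Lem X is little o of n}, $\sup_{\fl{n(1-\epsilon)}\le i\le n}Y^n(i)\le Y^n(\fl{n(1-\epsilon)})+2\sup_{0\le k\le n\epsilon+1}|Z(k)|$ with $Z$ a SSRW, and Doob's $L^2$-inequality bounds the last term in $L^2$ by $4\sqrt{n\epsilon+1}$. Dividing the resulting bound on $\sup_{\fl{n(1-\epsilon)}\le i\le n}X^n(i)$ by $\alpha(n)\vee\sqrt n$: the random-walk term is $O_p(\sqrt\epsilon)$; the term $V^n(\fl{n(1-\epsilon)})/(\alpha(n)\vee\sqrt n)\le V^n(\fl{n(1-\epsilon)})/\alpha(n)$ is (by Lemma~\ref{Lem V3 fluid limit}) either $\to0$ or $\convprob\epsilon^{4/3}$; and $X^n(\fl{n(1-\epsilon)})/(\alpha(n)\vee\sqrt n)$ converges in distribution, by Theorem~\ref{Thm Convergence up to 1-epsilon} when $\alpha(n)=O(\sqrt n)$ to (a bounded multiple of) $X^a_{1-\epsilon}$ --- which is $O_p(\sqrt\epsilon)$ by the Gaussian tail bound in Proposition~\ref{Prop sistem 1} --- and by Theorem~\ref{thm fluid limit for X for omega>>sqrt n} when $\alpha(n)\gg\sqrt n$ to $(1-\epsilon)^{2/3}-(1-\epsilon)^{4/3}\to0$. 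Letting first $n\to\infty$ and then $\epsilon\to0$ gives (c).

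\textbf{Parts (b) and (d).} Write $N^n=N^n_1+N^n_2$ as in the coupling. After the restart above, $N^n_2(n)-N^n_2(\fl{n(1-\epsilon)})\le V^n(\fl{n(1-\epsilon)})-V^n(n)\le V^n(\fl{n(1-\epsilon)})$, which is small by Lemma~\ref{Lem V3 fluid limit}. For the local time, use $L^n(n)-L^n(\fl{n(1-\epsilon)})=X^n(n)-X^n(\fl{n(1-\epsilon)})-(M^n_X(n)-M^n_X(\fl{n(1-\epsilon)}))-\sum_{j=\fl{n(1-\epsilon)}}^{n-1}\E{X^n(j+1)-X^n(j)\mid\mathcal F^n_j}$ with the local-time contribution to the drift removed, and for $N^n_1$ use the analogous martingale decomposition built from \eqref{eqn:N_1}. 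The $X^n$ boundary terms are controlled by (c); the martingale increments are $O_p(\sqrt\epsilon)$ after rescaling, since the conditional variances are bounded by \eqref{eqn:squareincrX} and by $\E{N^n_1(j+1)-N^n_1(j)\mid\mathcal F^n_j}$; the $O_p(1)$ isolated-cycle clashes are negligible; and (using $U^n(j)\ge n-j$ and Lemma~\ref{Lem Exp increments N}) the remaining drift sums are, up to absolute constants, bounded by $\sum_{j=\fl{n(1-\epsilon)}}^{n-1}\tfrac{X^n(j)+V^n(j)+1}{n-j}$, whose ``$+1$'' part contributes only $O(\log n)=o(\alpha(n)\vee\sqrt n)$. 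For the $X^n$ and $V^n$ parts I would split $[\fl{n(1-\epsilon)},n]$ dyadically, $I_\ell=[\fl{n(1-2^{-\ell})},\fl{n(1-2^{-(\ell+1)})})$ for $\ell_0\le\ell\lesssim\log_2 n$ with $2^{-\ell_0}\asymp\epsilon$; on each $I_\ell$ one has $\sum_{j\in I_\ell}\tfrac1{n-j}\asymp 1$, so the block contribution is of order $\sup_{j\in I_\ell}(X^n(j)+V^n(j))$, which is estimated exactly as in part (c) applied at scale $2^{-\ell}$ and is of order $(\alpha(n)\vee\sqrt n)$ times a quantity decaying geometrically in $\ell$ (the slowest piece, coming from the reflecting random walk over $I_\ell$, being $\asymp 2^{-\ell/2}$); summing over $\ell$ then gives $O((\alpha(n)\vee\sqrt n)\cdot\sqrt\epsilon)$, and letting $n\to\infty$ and $\epsilon\to0$ yields (b) and (d).

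\textbf{Main obstacle.} The delicate point is the last step: the dyadic blocks nearest $n$ live at scale $2^{-\ell}$ with $\ell$ of order $\log_2 n$, so the block-wise controls on $X^n(\fl{n(1-2^{-\ell})})$ and $V^n(\fl{n(1-2^{-\ell})})$ must hold uniformly in $n$ and $\ell$ with the correct $2^{-\ell}$-decay, whereas Theorems~\ref{thm fluid limit for X for omega>>sqrt n} and \ref{Thm Convergence up to 1-epsilon} are stated only for each fixed $t<1$. The reflecting random walk from the coupling alone is not enough near $n$, because it ignores the restoring drift $-\tfrac23 X^n/(1-t)$ that pushes $X^n$ down as $t\to1$. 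So one must revisit the martingale/drift estimates of Sections~\ref{Sect Fluid limit X N large omega} and \ref{Sect Diffusion limit small omega} to extract quantitative second-moment bounds valid uniformly up to $t=1-c/n$ --- in particular a bound of the shape $\E{V^n(\fl{nt})^2}\lesssim\alpha(n)^2(1-t)^{8/3}+\dots$, obtainable from the supermartingale estimate $\E{V^n(i+1)^2-V^n(i)^2\mid\mathcal F^n_i}\le 4V^n(i)/(3(n-i))$ together with a self-improving use of $\E{V^n(j)}$ --- and a companion bound for $X^n$; assembling these uniform estimates is the technical heart of the argument.
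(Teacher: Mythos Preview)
Your arguments for (a) and (c) are essentially the paper's, and they are fine.

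For (b) and (d), however, the dyadic approach has the gap you yourself flag, and you do not actually close it. Your scheme needs, for each dyadic level $\ell$ up to order $\log_2 n$, a bound of the form $\sup_{j\in I_\ell}X^n(j)\lesssim(\alpha(n)\vee\sqrt n)\,2^{-\ell/2}$, which in turn (after restarting the coupling at the left end of each $I_\ell$) requires a uniform-in-$t$ estimate like $\E{X^n(\fl{nt})}\lesssim(\alpha(n)\vee\sqrt n)\sqrt{1-t}$ for $t$ as close to $1$ as $1-c/n$. Neither the fluid-limit theorems nor the SSRW coupling gives this directly: the coupling has no restoring drift, and the martingale/variance bounds of Sections~\ref{Sect Fluid limit X N large omega}--\ref{Sect Diffusion limit small omega} are proved only for each fixed $t<1$, with constants blowing up as $t\uparrow1$. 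You sketch a self-improving second-moment scheme for $V^n$, but you do not carry it out, and the harder companion statement for $X^n$ (where the drift depends on the unknown $V^n$ and on $X^n$ itself) is left open. As written, the block sum $\sum_\ell \sup_{I_\ell}X^n$ is not shown to be $O((\alpha(n)\vee\sqrt n)\sqrt\epsilon)$.

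The paper avoids this circularity entirely by a different decomposition of the sum $\sum_{i=\fl{n(1-\epsilon)}}^{n-1}\frac{X_1^n(i)}{3U^n(i)+\cdots}$: instead of dyadic time blocks, it defines random stopping times $T_0<S_1<T_1<\cdots$ at which $X_1^n$ crosses the curve $C\sqrt{n-\cdot}$ from below (the $T_k$) and next returns to $0$ (the $S_k$). On $[S_k,T_k)$ one has $X_1^n(i)\le C\sqrt{n-i}$ deterministically, so that part contributes $\le\tfrac{2C}{3}\sqrt{n\epsilon+1}$. On the excursions $[T_{k-1},S_k)$, the optional stopping theorem applied to the $X_1^n$-martingale collapses the whole sum to $\tfrac12\E{X_1^n(T_{k-1})}\le\tfrac C2\sqrt{n-\E{T_{k-1}}}$; then the SSRW coupling and the quadratic martingale $Z^2-i$ give the recursion $n-\E{T_k}\le(n-\E{T_{k-1}})/(C^2-5)$, hence geometric decay and a finite sum. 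This stopping-time device is what replaces the uniform $X^n$-estimate you are missing. For (d), the paper then simply reads off $\E{L^n(n)-L^n(\fl{n(1-\epsilon)})}\le\E{X^n(n)}+2\,\E{N^n(n)-N^n(\fl{n(1-\epsilon)})}$ from the $X^n$-martingale and invokes (b) and (c), rather than running a separate dyadic argument.
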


\begin{proof}
(a) We have
\[
\E{V^n(\fl{n(1-\epsilon)})} \le 2 \epsilon^{4/3} \alpha(n) + \alpha(n) \Prob{V^n(\fl{(1-\epsilon)n}) > 2 \epsilon^{4/3} \alpha(n)}
\]
 and, by Lemma~\ref{Lem V3 fluid limit}, $\Prob{V^n(\fl{(1-\epsilon)n}) > 2 \epsilon^{4/3}\alpha(n)} \to 0$ as $n \to \infty$.  But $V^n$ is decreasing and so
\begin{equation} \label{eqn:Vbound}
\limsup_{n \to \infty} \frac{\E{V^n(n)}}{\alpha(n)} \le \limsup_{n \to \infty} \frac{\E{V^n(\fl{n(1-\epsilon)})}}{\alpha(n)} \le 2\epsilon^{4/3}.
\end{equation}
Since $V^n(n) \ge 0$ and this inequality holds for any $\epsilon > 0$, we must have that the left-hand side is, in fact, equal to 0. Markov's inequality then gives that $V^n(n)/\alpha(n) \convprob 0$.

(b) Recall the coupling and notation from Section~\ref{sec:coupling}.  First note that
\begin{equation} \label{eqn:N=N_1+N_2}
\begin{aligned}
\E{N^n(n) - N^n(\fl{n(1-\epsilon)})} & = \E{N_1^n(n) - N_1^n(\fl{n(1-\epsilon)})} + \E{N_2^n(n) - N_2^n(\fl{n(1-\epsilon)})} \\
& \le \E{N_1^n(n) - N_1^n(\fl{n(1-\epsilon)})} + \E{V^n(\fl{n(1-\epsilon)})}.
\end{aligned}
\end{equation}
From (\ref{eqn:N_1}), the process
\[
\left( N_1^n(k) - \sum_{i=0}^{k-1} \tfrac{3U^n(i) + 2(X_1^n(i) - \I{X_1^n(i) > 0})(3U^n(i) + 4V^n(i) + X^n(i) - 3)}{2(3U^n(i) + 4V^n(i) + X^n(i) -\I{X^n(i) > 0})(3U^n(i) + 4V^n(i) + X^n(i) -\I{X^n(i) > 0} - 2)}, k \ge 0 \right)
\]
is a martingale.  Hence,
\begin{align}
& \E{N_1^n(n) - N_1^n(\fl{n(1-\epsilon)})}  \notag \\
& \qquad = \E{\sum_{i=\fl{n(1-\epsilon)}}^{n-1} \tfrac{3U^n(i) + 2(X_1^n(i) - \I{X_1^n(i) > 0})(3U^n(i) + 4V^n(i) + X^n(i) - 3)}{2(3U^n(i) + 4V^n(i) + X^n(i) -\I{X^n(i) > 0})(3U^n(i) + 4V^n(i) + X^n(i) -\I{X^n(i) > 0} - 2)}} \notag \\
& \qquad \le \frac{1}{6} \sum_{i=\fl{n(1-\epsilon)}}^{n-1} \frac{1}{(n-i)} + \E{\sum_{i=\fl{n(1-\epsilon)}}^{n-1} \frac{X_1^n(i) - \I{X_1^n(i) > 0}}{3U^n(i) +  4V^n(i) + X^n(i) -\I{X^n(i) > 0}}} \notag \\
& \qquad \le \frac{1}{6}(\log(\ce{n\epsilon}) + 1) + \E{\sum_{i=\fl{n(1-\epsilon)}}^{n-1} \frac{X_1^n(i) - \I{X_1^n(i) > 0}}{3U^n(i) + 4 V^n(i) + X^n(i) -\I{X^n(i) > 0}}}. \label{eqn:nbound}
\end{align}
We fix $C > \sqrt{6}$ and define a sequence of stopping times.  Let
\[
T_0 = \inf\{k \ge \fl{n(1-\epsilon)}: X_1^n(k) \ge \fl{C \sqrt{n-k}}\},
\]
and, inductively, for $i \ge 1$, let
\[
\begin{aligned}
S_i & = \inf\{k \ge T_{i-1}: X_1^n(k) = 0 \} \wedge n \\
T_i & = \inf\{k \ge S_i: X_1^n(k) = \fl{C \sqrt{n-k}}\} \wedge n.
\end{aligned}
\]
Then since $U^n(i) \ge n-i$,
\[
\begin{aligned}
& \E{\sum_{i=\fl{n(1-\epsilon)}}^{n-1} \frac{X_1^n(i) - \I{X_1^n(i) > 0}}{3U^n(i) + 4 V^n(i) +  X^n(i) -\I{X^n(i) > 0}}} \\
& \le \E{\sum_{i=\fl{n(1-\epsilon)}}^{T_0-1} \frac{X_1^n(i) - \I{X_1^n(i) > 0}}{3(n-i)}}
+ \E{\sum_{k=1}^{n} \sum_{i=T_{k-1}}^{S_k-1} \frac{X_1^n(i) - \I{X_1^n(i) > 0}}{3U^n(i) + 4V^n(i) + X^n(i) -\I{X^n(i) > 0}}} \\
& \qquad + \E{\sum_{k=1}^{n} \sum_{i=S_k}^{T_k-1} \frac{X_1^n(i) - \I{X_1^n(i) > 0}}{3(n-i)}}.
\end{aligned}
\]
Either $T_0 = \fl{n(1-\epsilon)}$ (in which case the first sum is empty) or on the time interval $[\fl{n(1-\epsilon)}, T_0 - 1]$ we have $X_1^n(i) -1 \le C \sqrt{n-i}$.  Likewise, on $[S_k, T_k-1]$, we have $X_1^n(i) -1 \le C \sqrt{n-i}$.  So
\[
\E{\sum_{i=\fl{n(1-\epsilon)}}^{T_0-1} \frac{X_1^n(i) - \I{X_1^n(i) > 0}}{3(n-i)}}+ \E{\sum_{k=1}^{n} \sum_{i=S_k}^{T_k-1} \frac{X_1^n(i) - \I{X_1^n(i) > 0}}{3(n-i)}} \le \frac{C}{3} \sum_{i=\fl{n(1-\epsilon)}}^{n-1} \frac{1}{\sqrt{n-i}}.
\]
Since
\[
\sum_{i=1}^{\ce{n\epsilon}} \frac{1}{\sqrt{i}} \le \int_0^{\ce{n\epsilon}} \frac{1}{\sqrt{x}} dx \le 2 \sqrt{n\epsilon +1},
\]
we get
\begin{equation} \label{eqn:rootn}
\E{\sum_{i=\fl{n(1-\epsilon)}}^{T_0-1} \frac{X_1^n(i) - \I{X_1^n(i) > 0}}{3(n-i)}}+ \E{\sum_{k=1}^{n} \sum_{i=S_k}^{T_k-1} \frac{X_1^n(i) - \I{X_1^n(i) > 0}}{3(n-i)}} \le \frac{2C}{3}\sqrt{n \epsilon +1}.
\end{equation}
It therefore remains to deal with the term
\[
\E{\sum_{k=1}^{n} \sum_{i=T_{k-1}}^{S_k-1} \frac{X_1^n(i) - \I{X_1^n(i) > 0}}{3U^n(i) + 4V^n(i) + X^n(i) -\I{X^n(i) > 0}}}.
\]

Now note from (\ref{eqn:X_1}) that 
\begin{equation*} 
\begin{aligned}
& \left(X_1^n(k) + \sum_{i=0}^{k-1}\frac{2X_1^n(i) - 2\I{X_1^n(i) > 0} + X^n_2(i) - \I{X_1^n(i) = 0, X_2^n(i) > 0}}{3U^n(i) + 4 V^n(i) +  X^n(i) -  \I{X^n(i) > 0}} \right. \\
& \left. \qquad \qquad \qquad - \sum_{i=0}^{k-1} \I{X_1^n(i) = 0, X_2^n(i) > 0} - 2\sum_{i=0}^{k-1} \I{X_1^n(i) = X_2^n(i) = 0} , k \ge 0\right)
\end{aligned}
\end{equation*}
is a martingale.  Since $X_1^n$ does not touch 0 on the time interval $[T_{k-1}, S_k - 1]$, we have by the optional stopping theorem that
\[
\begin{aligned}
& \E{\sum_{i=T_{k-1}}^{S_k-1} \frac{X_1^n(i) - \I{X_1^n(i) > 0}}{3U^n(i) + 4V^n(i) +  X^n(i) -\I{X^n(i) > 0}}} \\
&\qquad  = \frac{1}{2} \E{X_1^n(T_{k-1}) - X_1^n(S_{k})} - \frac{1}{2}\E{\sum_{i=T_{k-1}}^{S_k-1} \frac{X^n_2(i) -\I{X_1^n(i) = 0, X_2^n(i) > 0}}{3U^n(i) + 4 V^n(i) +  X^n(i) -  \I{X^n(i) > 0}} }\\
& \qquad \le \frac{1}{2} \E{X_1^n(T_{k-1})} \\
& \qquad \le \frac{C \E{\sqrt{n-T_{k-1}}}}{2} \\
&\qquad \le \frac{C \sqrt{n - \E{T_{k-1}}}}{2},
\end{aligned}
\]
where the last line follows using Jensen's inequality.  So we have
\begin{equation} \label{eqn:sum}
\begin{aligned}
& \E{\sum_{k=1}^{n} \sum_{i=T_{k-1}}^{S_k-1} \frac{X_1^n(i) - \I{X_1^n(i) > 0}}{3U^n(i) + 4V^n(i) + X^n(i) -\I{X^n(i) > 0}}} \le \frac{C}{2} \sum_{k=0}^n \sqrt{n - \E{T_{k}}}.
\end{aligned}
\end{equation}

We thus need a lower bound on $\E{T_k}$ for any $k$.  Let us assume that at some time $\fl{n(1-\epsilon)} \le \ell < n$, we have $X_1^n(\ell) = 0$.  In order to find such a lower bound, we use the coupling from Section~\ref{sec:coupling}, which yields a SSRW reflected above 2, $Y^n$, such that $Y^n(\ell) = 2$ and $X_1^n(i) \le Y^n(i)$ for all $i \ge \ell$.  Recall that
\[
(Y^n(i))_{i \ge \ell} \equidist (2 + |Z(i)|)_{i \ge \ell},
\]
where $Z$ is a SSRW with $Z(\ell) = 0$.  Then if $\sigma = \inf\{i \ge \ell: 2+|Z(i)| = \fl{C \sqrt{n-i}}\}$, we have that $\sigma$ is stochastically smaller than $T_k$ conditioned on $S_k = \ell$.  Now, $(Z(i)^2 - i)_{i \ge \ell}$ is a martingale, and so
\[
\E{Z(\sigma)^2 - \sigma} = -\ell.
\]
But
\[
\E{Z(\sigma)^2} = \E{(\fl{C\sqrt{n-\sigma}} - 2)^2} \ge \E{(C \sqrt{n-\sigma} - 3)^2} \ge C^2 (n - \E{\sigma}) - 6 \E{\sqrt{n - \sigma}}.
\]
Since $n - \sigma \in \Z$, we have the very crude bound $\sqrt{n-\sigma} \le n-\sigma$.  Hence,
\[
\E{\sigma} - \ell = \E{Z(\sigma)^2} \ge (C^2-6)(n - \E{\sigma}).
\]
Then
\[
\E{\sigma} \ge \frac{(C^2-6)n + \ell}{C^2 - 5}.
\]
We obtain, by the stochastic domination, that for $k \ge 1$,
\[
\E{T_k} \ge \frac{(C^2 - 6)n + \E{S_k}}{C^2 - 5}.
\]
Since $S_k \ge T_{k-1}$, we get
\[
\E{T_k} \ge \frac{(C^2 - 6)n + \E{T_{k-1}}}{C^2 - 5},
\]
and so
\[
n - \E{T_k} \le \frac{n-\E{T_{k-1}}}{C^2 - 5}.
\]
By induction,
\[
n - \E{T_k} \le \frac{n - \E{T_0}}{(C^2 - 5)^k} \le \frac{\ce{n\epsilon}}{(C^2 - 5)^k},
\]
since $T_0 \ge \fl{n(1-\epsilon)}$.  It follows from (\ref{eqn:sum}) that
\[
\begin{aligned}
\E{\sum_{k=1}^{n} \sum_{i=T_{k-1}}^{S_k-1} \frac{X_1^n(i) - \I{X_1^n(i) > 0}}{3U^n(i) + 4V^n(i) + X^n(i) -\I{X^n(i) > 0}}} 
& \le \frac{C}{2} \sum_{k=0}^{\infty} \frac{\sqrt{n \epsilon + 1}}{(C^2 - 5)^{k/2}} \\
& = \frac{C\sqrt{C^2 - 5}}{2(\sqrt{C^2 - 5} - 1)} \sqrt{n \epsilon + 1}.
\end{aligned}
\]
Putting this together with (\ref{eqn:N=N_1+N_2}), (\ref{eqn:nbound}) and (\ref{eqn:rootn}) we get
\[
\begin{aligned}
& \E{N^n(n) - N^n(\fl{n(1-\epsilon)})} \\
 & \qquad \le \frac{1}{6}(\log(\ce{n\epsilon}) + 1) + C \sqrt{n \epsilon+1} + \frac{C\sqrt{C^2 - 5}}{2(\sqrt{C^2 - 5} - 1)} \sqrt{n \epsilon + 1} + \E{V^n(\fl{(1-\epsilon)n})}.
 \end{aligned}
\]
Using (\ref{eqn:Vbound}), we have
\[
\limsup_{n \to \infty} \frac{\E{V^n(\fl{n(1-\epsilon)})}}{\alpha(n)} \le 2\epsilon^{4/3}.
\] 
 Hence,
\begin{equation} \label{eqn:Nnbound}
\limsup_{n \to \infty} \frac{1}{\alpha(n) \vee \sqrt{n}} \E{N^n(n) - N^n(\fl{n(1-\epsilon)})} \le \left(C + \frac{C\sqrt{C^2 - 5}}{2(\sqrt{C^2 - 5} - 1)} \right) \sqrt{\epsilon} + 2 \epsilon^{4/3}.
\end{equation}
Applying Markov's inequality and taking $\epsilon \to 0$ then yields that for $\delta > 0$,
\[
\lim_{\epsilon \to 0} \limsup_{n \to \infty} \Prob{ \frac{1}{\alpha(n) \vee \sqrt{n}} \left( N^n(n) - N^n(\fl{n(1 - \epsilon)}) \right) > \delta} = 0.
\]

(c) We will show that $\frac{1}{\alpha(n) \vee \sqrt{n}} \E{\sup_{\fl{n(1-\epsilon)} \le i \le n} X^n(i)}$ is small.  We again use the coupling of $X^n_1$ with $Y^n$, but started now with $Y^n(\fl{n(1-\epsilon)}) = X_1^n(\fl{n(1-\epsilon)})+2$. This gives
\[
(Y^n(\fl{n(1-\epsilon)}+k))_{k \ge 0} \equidist (2 + |X_1^n(\fl{(1-\epsilon)n}) + Z(k)|)_{k \ge 0}.
\]
Since $X^n(i) = X_1^n(i) + X_2^n(i)$ with $X_2^n(i) \le V^n(\fl{n(1-\epsilon)})$ for $i \ge \fl{n(1-\epsilon)}$, we have
\[
\sup_{\fl{n(1-\epsilon)} \le i \le n} X^n(i) \le V^n(\fl{n(1-\epsilon)}) + \sup_{\fl{n(1-\epsilon)} \le i \le n} Y^n(i)
\]
and so we get
\begin{align*}
& \frac{1}{\alpha(n) \vee \sqrt{n}}\E{\sup_{\fl{n(1-\epsilon)} \le i \le n} X^n(i)} \\
& \qquad \le  \frac{2+ \E{V^n(\fl{n(1-\epsilon)})} + \E{X_1^n(\fl{n(1-\epsilon)})}}{\alpha(n) \vee \sqrt{n}} + \frac{1}{\sqrt{n}} \E{\sup_{0 \le i \le \ce{n\epsilon}} |Z(i)|}  \\
& \qquad \le \frac{2+\E{V^n(\fl{n(1-\epsilon)})}+ \E{X_1^n(\fl{n(1-\epsilon)})} }{\alpha(n)\vee \sqrt{n}} + 2 \sqrt{\epsilon},
\end{align*}
by Doob's $L^2$ inequality. We have already shown above that
\[
\lim_{\epsilon \to 0} \limsup_{n \to \infty} \frac{\E{V^n(\fl{n(1-\epsilon)})}}{\alpha(n) \vee \sqrt{n}} = 0.
\]
For $\alpha(n)/\sqrt{n} \to a$, we have
\[
\frac{X^n(\fl{n(1-\epsilon)})}{\sqrt{n}} \convdist X^a_{1-\epsilon}
\]
as $n \to \infty$.  By Lemma~\ref{Lem X is little o of n}, $X^n(\fl{n(1-\epsilon)})/\sqrt{n}$ is a uniformly integrable sequence of random variables, and so we obtain
\[
\E{\frac{X^n(\fl{n(1-\epsilon)})}{\sqrt{n}}} \to \E{X^a_{1-\epsilon}}
\]
as $n \to \infty$. Hence,
\[
\limsup_{n \to \infty} \frac{\E{\sup_{\fl{n(1-\epsilon)} \le i \le n} X^n(i)}}{\sqrt{n}} \le 2a\epsilon^{4/3} + \E{X_{1-\epsilon}^a} + 2\sqrt{\epsilon}.
\]
By the final statement of Proposition~\ref{Prop sistem 1}, we have
\[
\Prob{X_{1-\epsilon}^a > r \sqrt{\epsilon}} \le e^{-(r-a)^2/6}
\]
for $r > a$ and it follows, in particular, that $\E{X_{1-\epsilon}^a} \le C \sqrt{\epsilon}$ for some constant $C>0$.  

For $\alpha(n) > > \sqrt{n}$, we have $X^n(\fl{n(1-\epsilon)})/\alpha(n) \convprob \epsilon^{2/3} - \epsilon^{4/3}$.  Again, using the uniform integrability from  Lemma~\ref{Lem X is little o of n}, we get $\E{X^n(\fl{n(1-\epsilon)})}/\alpha(n) \convprob \epsilon^{2/3} - \epsilon^{4/3}$ and so
\[
\limsup_{n \to \infty} \frac{\E{\sup_{\fl{n(1-\epsilon)} \le i \le n} X^n(i)}}{\alpha(n)} \le 2\epsilon^{4/3} + \epsilon^{2/3} - \epsilon^{4/3}.
\]

Hence, for any $\alpha(n)$, 
\begin{equation} \label{eqn:Xbound}
\lim_{\epsilon \to 0} \limsup_{n \to \infty}\frac{\E{\sup_{\fl{n(1-\epsilon)} \le i \le n} X^n(i)}}{\alpha(n) \vee\sqrt{n}} = 0
\end{equation}
and another application of Markov's inequality gives
\[
\lim_{\epsilon \to 0} \limsup_{n \to \infty} \Prob{ \frac{1}{\sqrt{n}}\sup_{\fl{n(1-\epsilon)} \le i \le n} X^n(i)  > \delta} = 0.
\]

(d) Recall that $(M_X^n(k), k \ge 0)$ defined by 
\begin{equation} \label{eqn:martingale}
M_X^n(k) = X^n(k) +\frac{1}{2}V^n(k) - L^n(k) + \sum_{i=0}^{k-1} \frac{2X^n(i) - 2 \I{X^n(i) > 0}}{3U^n(i) + 4V^n(i) + X^n(i) - \I{X^n(i) > 0}}
\end{equation}
is a martingale.  It follows that
\[
\begin{aligned}
\E{X^n(n) - X^n(\fl{n(1-\epsilon)}}& = \E{L^n(n) - L^n(\fl{n(1 - \epsilon)})} + \frac{1}{2} \E{V^n(\fl{n(1-\epsilon)}) - V^n(n)} \\
& \qquad  - \E{\sum_{i=\fl{n(1-\epsilon)}}^{n-1}\frac{2X^n(i) - 2 \I{X^n(i) > 0}}{3U^n(i) + 4V^n(i) + X^n(i) -  \I{X^n(i) > 0}}}.
\end{aligned}
\]
Now, from (\ref{eqn:rootn}) we have
\[
\E{N^n(n) - N^n(\fl{n(1-\epsilon)})} \ge \E{\sum_{i=\fl{n(1-\epsilon)}}^{n-1} \frac{X^n(i) - \I{X^n(i) > 0}}{3U^n(i) + 4V^n(i) + X^n(i) -\I{X^n(i) > 0}}}.
\]
Hence,
\[
\E{L^n(n) - L^n(\fl{n(1 - \epsilon)})} \le \E{X^n(n)} + 2\E{N^n(n) - N^n(\fl{n(1-\epsilon)})}.
\]
Applying Markov's inequality and using (\ref{eqn:Nnbound}) and (\ref{eqn:Xbound}), we get that for any $\delta > 0$,
\[
\lim_{\epsilon \to 0} \limsup_{n \to \infty} \Prob{\frac{1}{\alpha(n) \vee \sqrt{n}} (L^n(n) - L^n(\fl{n(1 - \epsilon)})) > \delta} = 0. \qedhere
\]
\end{proof}

\begin{lem} \label{Lem Upper bound on number of nodes of degree 3 at eta n}
We have
\begin{equation*}
U^{n}(n)\leq{N^{n}(n)}+3V^{n}(0).
\end{equation*}
\end{lem}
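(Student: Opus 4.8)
The plan is to establish this by a purely deterministic counting argument applied to the first $n$ steps of the burning procedure, reading off everything we need from the explicit transition probabilities in Section~\ref{Sect Markov chain}. A preliminary point is that the process genuinely runs for at least $n$ steps: from the transition table, $U^n$ decreases by $1$ exactly when a burning half-edge is paired with a half-edge of a vertex of degree $3$, and otherwise changes by $+1$ or $0$; hence $U^n(i)\ge U^n(0)-i=n-i$ for every $i$, so $U^n(i)\ge 1$ for $i<n$ and therefore $\zeta_n\ge n$, so the steps $0,1,\dots,n-1$ are all genuine steps.

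Next I would partition the steps $0,1,\dots,n-1$ into four mutually exclusive types according to which pairing is made at that step (including the first step of a wave, at which an edge is split and one of the two resulting burning half-edges is then paired against a vertex, so that such a step is never a pure clash): type (a), pairing with a half-edge of a vertex of degree $3$; type (b), pairing with a half-edge of a vertex of degree $4$ with the fire absorbed, turning that vertex into one of degree $3$; type (c), pairing with a half-edge of a vertex of degree $4$ with the fire passed on, removing that vertex; type (d), pairing with another burning half-edge, which records a clash (possible only when $X^n(i)>0$). Inspecting the transitions, $U^n$ changes by $-1$, $+1$, $0$, $0$ in the four cases respectively. Writing $P_a,P_b,P_c,P_d$ for the numbers of such steps among the first $n$, we have $P_a+P_b+P_c+P_d=n$ and, since $U^n(0)=n$,
\[
U^n(n)=n-P_a+P_b.
\]

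Finally I would bound $P_b,P_c,P_d$. A vertex of degree $4$ is turned into a vertex of degree $3$ at most once, and there are $V^n(0)$ of them, so $P_b\le V^n(0)$; a vertex of degree $4$ passes on the fire at most once, so $P_c\le V^n(0)$; and every type-(d) step contributes a clash, so $P_d\le N^n(n)$. Therefore $P_a=n-P_b-P_c-P_d\ge n-2V^n(0)-N^n(n)$, and substituting into the display above gives
\[
U^n(n)=n-P_a+P_b\le 2V^n(0)+N^n(n)+V^n(0)=N^n(n)+3V^n(0),
\]
which is the claimed bound. The only point requiring care is the case analysis itself — checking against the transition probabilities that every one of the first $n$ steps really falls into exactly one of the four types (in particular the wave-start steps, and the absence of a pure-clash option there) and that $U^n$ changes as asserted in each case; the rest is arithmetic.
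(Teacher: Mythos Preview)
Your proof is correct and follows essentially the same counting argument as the paper: classify each of the first $n$ steps by whether it hits a degree-$3$ vertex, hits a degree-$4$ vertex (with or without passing the fire on), or is a clash, and read off the bound from $U^n(n)=n-P_a+P_b$ together with $P_b,P_c\le V^n(0)$ and $P_d\le N^n(n)$. Your write-up is in fact slightly more careful than the paper's --- you make explicit that $\zeta_n\ge n$ (so the first $n$ steps exist), and you correctly handle the self-loop subcase of type~(a) by using $P_d\le N^n(n)$ rather than equality.
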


\begin{proof}
At each step we either connect to a node of degree $3$ or $4$, or there is a clash. Each node of degree $3$ is removed after we connect to it, and each node of degree $4$ can be visited at most twice before being removed. Thus, during the first $n$ steps, a node of degree $3$ is removed at least $n-N^{n}(n)-2V^{n}(0)$ times. Moreover, at most $V^{n}(0)$ nodes of degree $4$ become nodes of degree $3$.  Hence,
\[
U^n(n) \le U^n(0) - (n - N^n(n) - 2V^n(0)) + V^n(0) = N^n(n) + 3V^n(0). \qedhere
\]
\end{proof}

We finally turn to the behaviour of our processes on the time-interval $[n,\zeta_n]$.

\begin{prop} \label{prop:ntozetan}
As $n \to \infty$, 
\begin{enumerate}
\item[(a)] $\zeta_n/n \convprob 1$,
\item[(b)] $\frac{1}{\omega(n) \vee \sqrt{n}} \sup_{n \le i \le \zeta_n} X^n(i) \convprob 0$,
\item[(c)] $\frac{1}{\omega(n) \vee \sqrt{n}} \left( N^n(\zeta_n) - N^n(n) \right) \convprob 0$,
\item[(d)] $\frac{1}{\omega(n) \vee \sqrt{n}} \left( L^n(\zeta_n) - L^n(n) \right) \convprob 0.$
\end{enumerate}
\end{prop}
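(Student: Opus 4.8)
The plan is to transfer the estimates of Proposition~\ref{prop:1-epsto1} and the scaling limits of Sections~\ref{Sect Fluid limit X N large omega}--\ref{Sect Diffusion limit small omega} to time $n$, and then run a martingale argument on $[n,\zeta_n]$. Combining Proposition~\ref{prop:1-epsto1} with Theorem~\ref{thm fluid limit for X for omega>>sqrt n} and Lemma~\ref{Lem X N joint convergence up to 1-epsilon}, one gets $X^n(n)/(\alpha(n)\vee\sqrt n)\convprob0$, $V^n(n)/(\alpha(n)\vee\sqrt n)\convprob0$ (trivial when $\alpha(n)$ is bounded, as $V^n(n)\le\alpha(n)$), and tightness of $N^n(n)/(\alpha(n)\vee\sqrt n)$; with Lemma~\ref{Lem Upper bound on number of nodes of degree 3 at eta n} it follows that $\Phi^n:=3U^n(n)+4V^n(n)+X^n(n)=O_p(\alpha(n)\vee\sqrt n)$, and using (\ref{eqn:NnUI}), (\ref{eqn:Nnbound}), (\ref{eqn:Vbound}), (\ref{eqn:Xbound}) and Lemma~\ref{Lem X is little o of n} that $\E{\Phi^n}=O(\alpha(n)\vee\sqrt n)$. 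For (a): after time $n$ each step processes part of one of the $\Phi^n/2$ edges still to be revealed in the configuration model, each edge needing at most two steps (cf.\ (\ref{eqn:crudebound})), so $\zeta_n-n\le\Phi^n=o_p(n)$ and $\zeta_n/n\convprob1$. For (b): restart the coupling of Section~\ref{sec:coupling} at time $n$ with $Y^n(n)=X_1^n(n)+2$, so $X_1^n(i)\le Y^n(i)$ for $i\ge n$ and $(Y^n(n+k))_{k\ge0}\equidist(2+|X_1^n(n)+Z(k)|)_{k\ge0}$ for a simple symmetric random walk $Z$ from $0$ independent of $\mathcal{F}^n_n$; since each vertex of degree $4$ adds at most one half-edge to the second stack, $X_2^n(i)\le X_2^n(n)+V^n(n)\le X^n(n)+V^n(n)$ for $i\ge n$, so
\[
\sup_{n\le i\le\zeta_n}X^n(i)\le X^n(n)+V^n(n)+2+\sup_{0\le k\le\zeta_n-n}|Z(k)|.
\]
The first two terms are $o_p(\alpha(n)\vee\sqrt n)$; for the last, $\zeta_n-n\le\Phi^n$ and $\E{\sup_{0\le k\le\Phi^n}|Z(k)|\mid\mathcal{F}^n_n}\le2\sqrt{\Phi^n}$ by Doob's inequality, so it is $O_p(\sqrt{\Phi^n})=o_p(\alpha(n)\vee\sqrt n)$; this proves (b).

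\emph{Parts (c) and (d)} are the technical heart. Write $\Phi_i=3U^n(i)+4V^n(i)+X^n(i)$; this is positive and even for $i<\zeta_n$, is non-increasing except that it is constant at the (at most $\Phi^n/3$) steps which begin a wave by passing on a fire, and decreases by at least $2$ at every step with $X^n(i)>0$. Split $N^n=N_1^n+N_2^n$ and $L^n$ accordingly. The second stack is handled crudely: the number of clashes after time $n$ involving the second stack, and the number of steps after time $n$ at which $X_1^n=0<X_2^n$, are each at most $X_2^n(n)+V^n(n)\le X^n(n)+V^n(n)=o_p(\alpha(n)\vee\sqrt n)$. For the first stack, use the martingales in (\ref{eqn:N_1}), (\ref{eqn:X_1}) and (\ref{eqn:martingale}) and optional stopping between
\[
T_0=\inf\{k\ge n:X_1^n(k)\ge\fl{C\sqrt{\Phi_k}}\},\quad S_j=\inf\{k\ge T_{j-1}:X_1^n(k)=0\}\wedge\zeta_n,\quad T_j=\inf\{k\ge S_j:X_1^n(k)\ge\fl{C\sqrt{\Phi_k}}\}\wedge\zeta_n,
\]
for a large constant $C$, in exact analogy with the proof of Proposition~\ref{prop:1-epsto1}(b) but with $\Phi_k$ playing the role of $n-k$. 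On $[n,T_0)$ and each $[S_j,T_j)$ we have $X_1^n(i)\le C\sqrt{\Phi_i}+2$, so, since $\Phi_i$ takes distinct even values along burning steps, the contributions of these intervals to $\sum_{i=n}^{\zeta_n-1}\E{N_1^n(i+1)-N_1^n(i)\mid\mathcal{F}^n_i}$ and to $\sum_{i=n}^{\zeta_n-1}\frac{2X_1^n(i)}{\Phi_i}\I{X^n(i)>0}$ (which bounds the first-stack part of $L^n(\zeta_n)-L^n(n)$ via (\ref{eqn:martingale})) are at most $\sum_{i:X^n(i)>0}\bigl(\frac{C\sqrt{\Phi_i}+2}{\Phi_i-1}+\frac1{\Phi_i-3}\bigr)\le C'\sqrt{\Phi^n}+C''\log\Phi^n$. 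On each $[T_{j-1},S_j)$ no wave begins, and optional stopping in (\ref{eqn:X_1}) gives $\E{\sum_{i=T_{j-1}}^{S_j-1}\frac{X_1^n(i)}{\Phi_i}}\le\frac12\E{X_1^n(T_{j-1})}\le\frac C2\sqrt{\E{\Phi_{T_{j-1}}}}+O(1)$. The decisive estimate — the analogue of $n-\E{T_k}\le(C^2-5)^{-1}(n-\E{T_{k-1}})$ in the proof of Proposition~\ref{prop:1-epsto1}(b) — is that $\E{\Phi_{T_j}}\le\rho\,\E{\Phi_{T_{j-1}}}$ for a fixed $\rho=\rho(C)<1$: from $S_j$, the walk $X_1^n$ (dominated by $2+|Z|$) needs of order $\Phi_{S_j}$ steps to climb to level $C\sqrt{\Phi}$, while $\Phi$ falls by at least $2$ at every burning step, so $\Phi_{T_j}$ is at most a fixed fraction of $\Phi_{S_j}\le\Phi_{T_{j-1}}$. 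Summing the geometric series gives $\sum_j\E{X_1^n(T_{j-1})}=O(\sqrt{\E{\Phi^n}})$, whence $\E{N_1^n(\zeta_n)-N_1^n(n)}$ and $\E{L^n(\zeta_n)-L^n(n)}$ are $O(\sqrt{\E{\Phi^n}}+\log n)=o(\alpha(n)\vee\sqrt n)$. Markov's inequality, and Doob's inequality for the martingale remainders (whose conditional variances are bounded using (\ref{eqn:squareincrX}) and $N_1^n(i+1)-N_1^n(i)\in\{0,1\}$), then give (c) and (d). One subtlety: the cross term $\sum_{i:X_1^n(i)>0}\frac{X_2^n(i)}{\Phi_i}$ entering the bound for $L^n(\zeta_n)-L^n(n)$ via (\ref{eqn:martingale}) picks up a spurious factor $\log n$ from the naive bound $X_2^n(i)\le X^n(n)+V^n(n)$; this is absorbed because $\E{X^n(n)}$ and $\E{V^n(n)}$ are in fact $o\bigl((\alpha(n)\vee\sqrt n)/\log n\bigr)$, which follows by letting $\epsilon=\epsilon_n\to0$ slowly in the polynomial-rate fluid-limit bounds underlying (\ref{eqn:Vbound}) and (\ref{eqn:Xbound}).

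\emph{Main obstacle.} The hard part is the stopping-time analysis on $[n,\zeta_n]$: adopting $\Phi_k$ as the ``remaining-size'' variable in place of $n-k$, and proving the geometric decay $\E{\Phi_{T_j}}\le\rho\,\E{\Phi_{T_{j-1}}}$, since the mean-reverting drift of $X_1^n$ and the fact that $\Phi$ is only eventually — not monotonically — decreasing (it is flat at wave-starts that pass on a fire) make the bookkeeping considerably more delicate than in Proposition~\ref{prop:1-epsto1}(b).
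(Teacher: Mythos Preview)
Your treatment of (a) and (b) is essentially the paper's: bound $\zeta_n-n$ by $3U^n(n)+4V^n(n)+X^n(n)$ via the crude edge-count, invoke Lemma~\ref{Lem Upper bound on number of nodes of degree 3 at eta n} and tightness of $N^n(n)/(\alpha(n)\vee\sqrt n)$, then run the SSRW coupling restarted at time $n$.  That part is fine.

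For (c) and (d) you take a much harder route than the paper, and the argument has real gaps.  You try to rerun the stopping-time machinery of Proposition~\ref{prop:1-epsto1}(b) with $\Phi_k$ in place of $n-k$.  The decisive step, the geometric decay $\E{\Phi_{T_j}}\le\rho\,\E{\Phi_{T_{j-1}}}$, is only argued heuristically.  In Proposition~\ref{prop:1-epsto1}(b) the analogous estimate works because the barrier $\fl{C\sqrt{n-k}}$ is a \emph{deterministic} function of time, so one can define the auxiliary hitting time $\sigma$ for the SSRW $2+|Z|$ to that barrier and use the martingale $Z(k)^2-k$ directly.  Here the barrier $\fl{C\sqrt{\Phi_k}}$ is random and coupled to $X_1^n$ itself; you cannot simply transplant the optional-stopping computation.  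One would need, for instance, a deterministic lower envelope for $\Phi_k$ on each excursion, but $\Phi$ can stay flat at wave-starts (which you are simultaneously trying to count), and the number of those is not controlled a priori.  The ``subtlety'' you flag at the end --- needing $\E{X^n(n)},\E{V^n(n)}=o((\alpha(n)\vee\sqrt n)/\log n)$ --- is a genuine extra demand: the paper's bounds (\ref{eqn:Vbound}) and (\ref{eqn:Xbound}) give $o(\alpha(n)\vee\sqrt n)$ with no rate, and upgrading them requires revisiting the fluid-limit error terms with $\epsilon=\epsilon_n\to 0$, which is not free.

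The paper avoids all of this.  For (c) it simply uses part~(b), which you have already proved, as input: once $\sup_{n\le i\le\zeta_n}X^n(i)=o_p(\alpha(n)\vee\sqrt n)$, the conditional increment of $N^n$ is at most $\bigl(\tfrac{X^n(i)+1}{3U^n(i)}\bigr)\wedge1$.  One then introduces the \emph{single} stopping time $\eta_n=\inf\{i\ge n:U^n(i)\le\epsilon(\alpha(n)\vee\sqrt n)\}$ and splits $[n,\zeta_n]$ at $\eta_n$.  Before $\eta_n$ the denominator $3U^n(i)$ is at least $3\epsilon(\alpha(n)\vee\sqrt n)$ and there are $O((\alpha(n)\vee\sqrt n)/\epsilon)$ steps (on the tight event $\{N^n(n)\le(\alpha(n)\vee\sqrt n)/\epsilon\}$), so the contribution is $\tfrac{1}{\epsilon^2}\sup X^n=o_p(\alpha(n)\vee\sqrt n)$.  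After $\eta_n$ there are at most $3\epsilon(\alpha(n)\vee\sqrt n)+4V^n(n)$ steps, each contributing at most $1$.  Send $\epsilon\to0$.  Part (d) then follows from (c) via optional stopping in the martingale (\ref{eqn:martingale}), which yields $\E{L^n(\zeta_n)-L^n(n)}\le\E{X^n(n)}+2\E{N^n(\zeta_n)-N^n(n)}$ on the same tight event.  No excursion decomposition, no geometric decay, no $\log n$ cross term.
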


\begin{proof} Write $\bar{\alpha}(n) = \alpha(n) \vee \sqrt{n}$.

(a) As in each step we remove at least two half-edges, by the same argument as gave us (\ref{eqn:crudebound}), we have $\zeta_n - n \le 3 U^n(n) + 4V^n(n) \le 3U^n(n) + 4\alpha(n)$ and so Lemma~\ref{Lem Upper bound on number of nodes of degree 3 at eta n} implies that 
\begin{align}\label{eq zeta minus n}
\zeta_n - n \le 3 N^n(n) + 13\alpha(n).
\end{align}
If $\alpha(n)/\sqrt{n} \to a$ then $N^n(n)/\sqrt{n} \convdist N_1^a$, and it follows that $\zeta_n/n \convprob 1$ and, indeed, that $(\zeta_n - n)/\sqrt{n}$ is a tight sequence of random variables.   If, on the other hand, $\alpha(n) > > \sqrt{n}$, we have $N^n(n)/\alpha(n) \convprob 1/4$, and again we get $\zeta_n/n \convprob 1$ with $(\zeta_n - n)/\alpha(n)$ a tight sequence of random variables.

(b) Consider $\sup_{n \le i \le \zeta_n} X^n(i)$.  By the usual coupling with $Y^n$, we have
\[
\sup_{n \le i \le \zeta_n} X^n(i) \le V^n(n) +  \sup_{n \le i \le \zeta_n} Y^n(i),
\]
where
\[
(Y^n(n+i))_{0 \le i \le \zeta_n-n} \equidist (2 + |X_1^n(n) + Z(i)|)_{0 \le i \le \zeta_n - n}
\]
Fix $\delta > 0$.  Then for any $C > 13$, since $\frac{\sup_{0 \le i \le C\bar{\alpha}(n)} |Z(i)|}{\sqrt{\bar{\alpha}(n)}}$ is bounded in $L^2$, we have
\begin{align*}
& \limsup_{n \to \infty} \Prob{\frac{\sup_{0 \le i \le \zeta_n-n} |Z(i)|}{\bar{\alpha}(n)} > \delta/2} \\
& \qquad \le \limsup_{n \to \infty} \Prob{\frac{\sup_{0 \le i \le C\bar{\alpha}(n)} |Z(i)|}{\bar{\alpha}(n)} > \delta/2} + \limsup_{n \to \infty} \Prob{\zeta_n - n > C \bar{\alpha}(n)} \\
& \qquad \le \Prob{N_1 > (C -13)/3},
\end{align*}
where for the last inequality we recall (\ref{eq zeta minus n}). We have $V^n(n)/\bar{\alpha}(n) \convprob 0$ by Proposition~\ref{prop:1-epsto1} (a) and $X^n(n)/\bar{\alpha}(n) \convprob 0$, so since $C > 13$ was arbitrary, we obtain
\[
\frac{1}{\bar{\alpha}(n)} \sup_{n \le i \le \zeta_n} X^n(i) \convprob 0.
\]

(c) We have
\begin{equation} \label{eqn:split}
\begin{aligned}
& \Prob{\frac{1}{\bar{\alpha}(n)} (N^n(\zeta_n) - N^n(n)) > \delta} \\
& \le \Prob{\frac{1}{\bar{\alpha}(n)}(N^n(\zeta_n) - N^n(n)) > \delta, \frac{N^n(n)}{\bar{\alpha}(n)} \le \frac{1}{\epsilon} } + \Prob{\frac{N^n(n)}{\bar{\alpha}(n)} > \frac{1}{\epsilon}} \\
& \le \frac{1}{\delta \bar{\alpha}(n)}\E{(N^n(\zeta_n) - N^n(n)) \I{\frac{N^n(n)}{\bar{\alpha}(n)} \le \frac{1}{\epsilon}} }  + \epsilon \frac{\E{N^n(n)}}{\bar{\alpha}(n)}.
\end{aligned}
\end{equation}

Let $\eta_n = \inf\{i \ge n: U^n(i) \le \epsilon \bar{\alpha}(n)\}$.  We have that  $\zeta_n - \eta_n \le 3U^n(\eta_n) + 4V^n(n) \le 3 \epsilon \bar{\alpha}(n)$ and $\zeta_n - n \le 3 N^n(n)$.  Now
\begin{equation} \label{eqn:finalbound}
\begin{aligned}
& \E{(N^n(\zeta_n) - N^n(n)) \I{\frac{N^n(n)}{\bar{\alpha}(n)} \le \frac{1}{\epsilon}}}  \\
& \le  \E{\sum_{i=n}^{\zeta_n - 1} \frac{3U^n(i) + 2(X^n(i) - \I{X^n(i) > 0})(3U^n(i) + 4V^n(i) + X^n(i) - 3)}{2(3U^n(i) + 4V^n(i) + X^n(i) -\I{X^n(i) > 0})(3U^n(i) + X^n(i) -\I{X^n(i) > 0} - 2)} \I{\frac{N^n(n)}{\bar{\alpha}(n)} \le \frac{1}{\epsilon}}}   \\
& \le \E{\sum_{i=n}^{\zeta_n - 1} \left( \frac{X^n(i) + 1}{3 U^n(i)} \wedge 1\right) \I{\frac{N^n(n)}{\bar{\alpha}(n)} \le \frac{1}{\epsilon}}}  \\
& \le \E{\sum_{i=n}^{\eta_n} \left(\frac{X^n(i) + 1}{3 \epsilon \bar{\alpha}(n)}\wedge 1\right) \I{\frac{N^n(n)}{\bar{\alpha}(n)} \le \frac{1}{\epsilon}}} + 3 \epsilon \bar{\alpha}(n)  \\
& \le \frac{1}{\epsilon^2} \E{ \left(\left(1+\sup_{n \le i \le \zeta_n} X^n(i) \right) \wedge 3 \epsilon \bar{\alpha}(n) \right) \I{\frac{N^n(n)}{\bar{\alpha}(n)} \le \frac{1}{\epsilon}}} + 3 \epsilon \bar{\alpha}(n). 
\end{aligned}
\end{equation}
By the coupling, we have that the expectation is bounded by
\[
\E{\left(3 + V^n(n) + X^n(n) + \sup_{0 \le i \le 3\bar{\alpha}(n)/\epsilon}  |Z(i)| \right) \wedge 3 \epsilon \bar{\alpha}(n)}.
\]
Now
\[
\frac{1}{\bar{\alpha}(n)} \left(3 + V^n(n) + X^n(n) + \sup_{0 \le i \le 3\bar{\alpha}(n)/\epsilon}  |Z(i)| \right) \wedge 3 \epsilon
\]
is a bounded random variable which converges to 0 in probability, and so from (\ref{eqn:finalbound}) we get
\[
\limsup_{n \to \infty} \frac{1}{\bar{\alpha}(n)} \E{(N^n(\zeta_n) - N^n(n)) \I{\frac{N^n(n)}{\bar{\alpha}(n)} \le \frac{1}{\epsilon}}}  \le 3 \epsilon.
\]
Now note that combining Theorem~\ref{thm fluid limit for X for omega>>sqrt n},  (\ref{eqn:NnUI}) and (\ref{eqn:Nnbound}), we have that there exists a constant $C > 0$ such that
\[
\sup_{n \ge 1} \frac{\E{N^n(n)}}{\bar{\alpha}(n)} < C
\]
and so using (\ref{eqn:split}) we obtain
\[
 \limsup_{n \to \infty} \Prob{\frac{1}{\bar{\alpha}(n)} (N^n(\zeta_n) - N^n(n)) > \delta} \le \frac{3 \epsilon}{\delta} +  C \epsilon.
\]
Since $\epsilon > 0$ was arbitrary, we get 
\[
 \limsup_{n \to \infty} \Prob{\frac{1}{\bar{\alpha}(n)} (N^n(\zeta_n) - N^n(n)) > \delta} = 0.
\]
Finally, we observe that using the martingale $M_X^n$ defined in (\ref{eqn:martingale}) and the optional stopping theorem, we have
\begin{align*}
0 & \le \E{(L^n(\zeta_n) - L^n(n)) \I{\frac{N^n(n)}{\bar{\alpha}(n)} \le \frac{1}{\epsilon}}} \\
& \le \E{(X^n(\zeta_n) - X^n(n))\I{\frac{N^n(n)}{\bar{\alpha}(n)} \le \frac{1}{\epsilon}}} + 2 \E{(N^n(\zeta_n) - N^n(n)) \I{\frac{N^n(n)}{\bar{\alpha}(n)} \le \frac{1}{\epsilon}}} \\
& \le 2 \E{(N^n(\zeta^n) - N^n(n))\I{\frac{N^n(n)}{\bar{\alpha}(n)} \le \frac{1}{\epsilon}}}
\end{align*}
since $X^n(\zeta_n) = 0$ and $X^n(n) \ge 0$.  So then
\begin{align*}
& \Prob{ \frac{1}{\bar{\alpha}(n)} \left( L^n(\zeta_n) - L^n(n) \right) > \delta}  \\
&  \qquad \le \Prob{\frac{1}{\bar{\alpha}(n)}(L^n(\zeta_n) - L^n(n)) > \delta, \frac{N^n(n)}{\bar{\alpha}(n)} \le \frac{1}{\epsilon} } + \Prob{\frac{N^n(n)}{\bar{\alpha}(n)} > \frac{1}{\epsilon}} \\
& \qquad \le \frac{2}{\delta \bar{\alpha}(n)}\E{(N^n(\zeta_n) - N^n(n)) \I{\frac{N^n(n)}{\bar{\alpha}(n)} \le \frac{1}{\epsilon}} }  + \epsilon \frac{\E{N^n(n)}}{\bar{\alpha}(n)}
\end{align*}
and the rest of the argument goes through as before.
\end{proof}

We now have all the elements needed to prove Theorem~\ref{thm:scalinglimit}.

\begin{proof}[Proof of Theorem~\ref{thm:scalinglimit}]
(i) From Theorem~\ref{Thm Convergence up to 1-epsilon} and Lemma~\ref{Lem X N joint convergence up to 1-epsilon}, for any $\epsilon > 0$ we have
\[
\left(\frac{X^n(\fl{ns})}{\sqrt{n}}, \frac{L^n(\fl{ns})}{\sqrt{n}}, \frac{N^n(\fl{ns})}{\sqrt{n}}, 0 \le s \le 1-\epsilon \right) \convdist \left(X_s, L_{s},N_s, 0 \le s \le 1-\epsilon \right).
\]
By Proposition~\ref{Prop sistem 1}, we have
\[
\left(X_{1-\epsilon}, L_{1-\epsilon}, N_{1-\epsilon} \right) \to \left(0, L_1, N_1 \right)
\]
almost surely as $\epsilon \to 0$.  The principle of accompanying laws (Theorem 3.2 of Billingsley~\cite{Billingsley}) gives that the statement of Proposition~\ref{prop:1-epsto1} is exactly what we need in order to deduce that
\[
\left(\frac{X^n(\fl{ns})}{\sqrt{n}}, \frac{L^n(\fl{ns})}{\sqrt{n}}, \frac{N^n(\fl{ns})}{\sqrt{n}}, 0 \le s \le 1 \right) \convdist \left(X_s, L_s, N_s, 0 \le s \le 1 \right).
\]
Proposition~\ref{prop:ntozetan} then allows us to conclude that
\[
\left(\frac{X^n(\fl{ns})}{\sqrt{n}}, \frac{L^n(\fl{ns})}{\sqrt{n}}, \frac{N^n(\fl{ns})}{\sqrt{n}}, 0 \le s \le \zeta_n/n \right) \convdist \left(X_s, L_s, N_s, 0 \le s \le 1 \right),
\]
as desired.

(ii) From Theorem~\ref{thm fluid limit for X for omega>>sqrt n}, for any $\epsilon > 0$, we have
\[
\left( \frac{X^n(\fl{ns})}{\alpha(n)}, \frac{L^n(\fl{ns})}{\alpha(n)}, \frac{N^n(\fl{ns})}{\alpha(n)}, 0 \le s \le 1-\epsilon \right) \convdist \left(x(s), 0, m(s), 0 \le s \le 1-\epsilon \right).
\]
It is straightforward that $x(1-\epsilon) \to 0$ and that $m(1-\epsilon) \to 1/4$ as $\epsilon \to 0$. By another application of the principle of accompanying laws, and Propositions~\ref{prop:1-epsto1} and \ref{prop:ntozetan}, we may then conclude that
\[
\left( \frac{X^n(\fl{ns})}{\alpha(n)}, \frac{L^n(\fl{ns})}{\alpha(n)}, \frac{N^n(\fl{ns})}{\alpha(n)}, 0 \le s \le \zeta_n/n \right) \convdist \left(x(s), 0, m(s), 0 \le s \le 1 \right),
\]
as desired.
\end{proof}

%
%
\section{Appendix} 
\subsection{Proof of the invariance principle}

\begin{proof}[Proof of Theorem \ref{Theorem Invariance principle}]
Let ${\theta}^n(r) = \inf\{t \ge 0: Q^n(t) > \int_0^t \sup_{|y| \le r} q(s,y) ds + 1 \}$.  Set $\tilde{M}^n_r := M^n( \cdot \wedge \theta^n(r) \wedge \tau^n(r))$.  Relative compactness of $\{\tilde{M}^n_r\}_{n \ge 1}$ follows as in Theorem 4.1 of Chapter 7 of Ethier and Kurtz.  Assumptions (c) and (d) imply that any subsequential weak limit has sample paths in $\mathbb{C}(\R_+,\R)$ almost surely.  Then all of the conditions in Assumption 4.1 of Kang and Williams \cite{KangWilliams} hold for the processes stopped at $\tau^n(r)$.  It follows by their Theorem 4.2 that the sequence of processes
\[
\{(Y^n,M^n,L^n)\}_{n \ge 1}
\]
is tight and such that any subsequential weak limit almost surely has continuous sample paths.

Now fix $r_0 > 0$ and let $\{Y^{n_k}(\cdot \wedge \tau^{n_k}(r_0)), M^{n_k}(\cdot \wedge \tau^{n_k}(r_0)), L^{n_k}(\cdot \wedge \tau^{n_k}(r_0)) \}_{k \ge 1}$ be a convergent subsequence with limit $Y_{r_0}, M_{r_0}, L_{r_0}$.  Let $\tau_{r_0}(r) = \inf\{t \ge 0: |Y_{r_0}(t)| \ge r\}$.  Then for all but countably many $r < r_0$ (i.e.\ those such that $\Prob{\lim_{s \to r} \tau_{r_0}(s) = \tau_{r_0}(r)} = 1$), we have
\begin{align*}
& (Y^{n_k}(\cdot \wedge \tau^{n_k}(r)), M^{n_k}(\cdot \wedge \tau^{n_k}(r)), L^{n_k}(\cdot \wedge \tau^{n_k}(r)), \tau^{n_k}(r)) \\
& \convdist (Y_{r_0}(\cdot \wedge \tau_{r_0}(r)), M_{r_0}(\cdot \wedge \tau_{r_0}(r)), L_{r_0}(\cdot \wedge \tau_{r_0}(r)), \tau_{r_0}(r)).
\end{align*}
Conditions (f), (c) and (d) guarantee that $M^{n_{k}}\left({\cdot\wedge{\tau^{n_{k}}(r_{0})}}\right)$ is a uniformly integrable martingale. Conditions (b), (g) and (e) guarantee that $(M^{n_{k}}\left({\cdot\wedge{\tau^{n_{k}}(r_{0})}}\right))^2 - Q^{n_k}\left({\cdot\wedge{\tau^{n_{k}}(r_{0})}}\right)$ is a uniformly integrable martingale. Thus, by Problem 7 from Chapter 7 of Ethier and Kurtz, as in the proof of Theorem 1.4(b) there, we must then have that
\[
M_{r_0}(t \wedge \tau_{r_0}(r)) = Y_{r_0}(t \wedge \tau_{r_0}(r)) - L_{r_0}(t \wedge \tau_{r_0}(r)) - \int_0^{t \wedge \tau_{r_0}(r)} b(s, Y_{r_0}(s)) ds
\]
and
\[
M_{r_0}(t \wedge \tau_{r_0}(r))^2 - \int_0^{t \wedge \tau_{r_0}(r)} q(s, Y_{r_0}(s))ds
\]
are continuous martingales.  It follows that
\[
M_{r_0}(t \wedge \tau_{r_0}(r)) = \int_0^{t \wedge \tau_{r_0}(r)} \sigma(s, Y_{r_0}(s)) dW(s)
\]
for some Brownian motion $W$.  But then we have
\[
Y_{r_0}(t \wedge \tau_{r_0}(r)) = \int_0^{t \wedge \tau_{r_0}(r)} b(s,  Y_{r_0}(s)) ds + \int_0^{t \wedge \tau_{r_0}(r)} \sigma(s, Y_{r_0}(s)) dW(s) + L_{r_0}(t \wedge \tau_{r_0}(r)).
\]
Moreover, by (i), $L_{r_0}(0) = 0$, $L_{r_0}$ is non-decreasing, and $\int_0^{t \wedge \tau_{r_0}(r)} \I{Y_{r_0}(s) > 0} d
L_{r_0}(s) = 0$.  So $(Y_{r_0}, L_{r_0})$ solves a stopped version of the reflected SDE.

But if $(Y,L)$ is the unique solution of the reflected SDE with Brownian motion $W$ then $(Y(\cdot \wedge \tau(r)), L(\cdot \wedge \tau(r)))$ is the unique solution of the stopped version, where $\tau(r) = \inf\{t \ge 0: |Y(s)| \ge r\}$.  In consequence, $(Y^n(\cdot \wedge \tau^n(r)), L^n(\cdot, \wedge \tau^n(r))) \convdist (Y(\cdot \wedge \tau(r)), L(\cdot \wedge \tau(r))$ for any $r$ such that $\Prob{\lim_{s \to r}\tau(s) = \tau(r)} = 1$.  But $\tau(r) \to \infty$ as $r \to \infty$, since $Y$ has continuous sample paths.  Hence, $(Y^n, L^n) \convdist (Y,L)$.
\end{proof}

\subsection{A hitting probability calculation}
Fix $d \in (0,1/2)$ and an integer $b \ge 1$.  Suppose that $A$ is a random walk with step-distribution
\begin{align*}
\Prob{A(i+1) - A(i) = -1 |A(i)} & = \frac{1}{2}, \\
\Prob{A(i+1) - A(i) = 1 |A(i)} & = \frac{1}{2} - d, \\
\Prob{A(i+1) - A(i) = 2 |A(i)} & = d.
\end{align*}

\begin{lem} \label{lem:hittingprob}
The probability, started from 1, that $A$ hits $0$ before $\{b,b+1\}$ is given by
\[
1 - \tfrac{4d (-1-\sqrt{1+8d})^{b} + 4d (-1+\sqrt{1+8d})^b }{(-2)^{b+1}\sqrt{1+8d} + 4d(-1-\sqrt{1+8d})^b + 4d(-1+\sqrt{1+8d})^b - (-1-\sqrt{1+8d})^{b+1} - (-1+\sqrt{1+8d})^{b+1}}.
\]
\end{lem}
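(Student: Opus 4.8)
The plan is a standard first-step analysis. For $0 \le j \le b+1$, set
\[
h_j = \Prob{\text{$A$ hits $0$ before $\{b,b+1\}$} \mid A(0) = j},
\]
so that $h_0 = 1$ and $h_b = h_{b+1} = 0$ (both $b$ and $b+1$ must be treated as absorbing, since a $+2$ step from $b-1$ overshoots $b$), and the quantity we want is $h_1$. Conditioning on the first step of $A$, for $1 \le j \le b-1$ we get $h_j = \tfrac12 h_{j-1} + \bigl(\tfrac12 - d\bigr) h_{j+1} + d\, h_{j+2}$, i.e.\ the constant-coefficient linear recurrence $2d\, h_{j+2} + (1-2d) h_{j+1} - 2 h_j + h_{j-1} = 0$. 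Its characteristic polynomial factors as
\[
2d\lambda^3 + (1-2d)\lambda^2 - 2\lambda + 1 = (\lambda - 1)\bigl(2d\lambda^2 + \lambda - 1\bigr),
\]
so the characteristic roots are $1$ and $\lambda_\pm = \tfrac{-1 \pm \sqrt{1+8d}}{4d}$. For $d \in (0,1/2)$ these three roots are real and pairwise distinct (in particular $\lambda_\pm \ne 1$, since $2d(1)^2 + 1 - 1 = 2d \ne 0$), so $h_j = C_1 + C_2 \lambda_+^j + C_3 \lambda_-^j$ for constants $C_1, C_2, C_3$.

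Next I would pin down the constants. Imposing $h_0 = 1$, $h_b = 0$, $h_{b+1} = 0$ gives a $3 \times 3$ linear system for $(C_1, C_2, C_3)$ with coefficient matrix having rows $(1,1,1)$, $(1,\lambda_+^b,\lambda_-^b)$, $(1,\lambda_+^{b+1},\lambda_-^{b+1})$. Solving by Cramer's rule yields
\[
C_1 = \frac{(\lambda_+\lambda_-)^b(\lambda_- - \lambda_+)}{\Delta}, \qquad C_2 = \frac{-\lambda_-^b(\lambda_- - 1)}{\Delta}, \qquad C_3 = \frac{\lambda_+^b(\lambda_+ - 1)}{\Delta},
\]
where $\Delta = (\lambda_+\lambda_-)^b(\lambda_- - \lambda_+) - \lambda_-^b(\lambda_- - 1) + \lambda_+^b(\lambda_+ - 1)$, and hence
\[
h_1 = C_1 + C_2 \lambda_+ + C_3 \lambda_- = \frac{(\lambda_+\lambda_-)^b(\lambda_- - \lambda_+) - \lambda_+\lambda_-^b(\lambda_- - 1) + \lambda_-\lambda_+^b(\lambda_+ - 1)}{\Delta}.
\]

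It then remains to bring this into the displayed closed form. The tools I would use are Vieta's relations for $2d\lambda^2 + \lambda - 1$, namely $\lambda_+ + \lambda_- = \lambda_+\lambda_- = -\tfrac{1}{2d}$, together with their consequence $(\lambda_+ - 1)(\lambda_- - 1) = \lambda_+\lambda_- - (\lambda_+ + \lambda_-) + 1 = 1$, and the elementary identities $4d\lambda_\pm = -1 \pm \sqrt{1+8d}$, $\ 4d\,\lambda_+\lambda_- = -2$, $\ 4d(\lambda_- - \lambda_+) = -2\sqrt{1+8d}$. Multiplying the numerator and denominator of the expression for $h_1$ through by $(4d)^{b+1}$ converts each $\lambda_\pm^b$ and $\lambda_\pm^{b+1}$ into $(-1\pm\sqrt{1+8d})^b$ and $(-1\pm\sqrt{1+8d})^{b+1}$, while the $(\lambda_+\lambda_-)^b(\lambda_- - \lambda_+)$ term collapses to $(-2)^{b+1}\sqrt{1+8d}$; after collecting terms and rewriting $h_1$ as $1 - (\text{denominator} - \text{numerator})/\text{denominator}$, one reads off the stated expression.

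The genuine difficulty is not conceptual but a matter of bookkeeping: the rescaled numerator and denominator each consist of several terms of very similar shape, and signs must be tracked carefully through the Cramer determinants and the multiplication by $(4d)^{b+1}$. The identity $(\lambda_+ - 1)(\lambda_- - 1) = 1$ is the key device that keeps the algebra finite, since it removes the $(\lambda_\pm - 1)$ prefactors that would otherwise proliferate.
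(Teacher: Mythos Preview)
Your proposal is correct and follows essentially the same approach as the paper: first-step analysis yields the same third-order linear recurrence with boundary conditions $h_0=1$, $h_b=h_{b+1}=0$, the characteristic roots are $1$ and $\lambda_\pm=\tfrac{-1\pm\sqrt{1+8d}}{4d}$, and the constants are determined by the boundary data before simplifying $h_1$. You supply somewhat more detail on the algebra (the explicit factoring of the characteristic polynomial, Cramer's rule, the Vieta identity $(\lambda_+-1)(\lambda_--1)=1$, and the rescaling by $(4d)^{b+1}$), but the route is the same.
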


\begin{proof}
For $0 \le k \le b+1$, let
\[
h_k = \Prob{\text{$A$ hits $0$ before $\{b,b+1\}$} | A(0) = k}.
\]
Then for $1 \le k \le b-1$,
\[
h_k = \frac{1}{2} h_{k-1} + \left(\frac{1}{2} - d\right) h_{k+1} + d h_{k+2}
\]
and elementary calculations yield that
\[
h_k = \varphi + \beta \lambda_{-}^k + \gamma \lambda_+^k, \quad 0 \le k \le b+1,
\]
where
\[
\lambda_{\pm} = \frac{-1 \pm \sqrt{1+8d}}{4d}.
\]
The boundary conditions are $h_0 = 1$, $h_b = h_{b+1} = 0$.  Solving for the constants gives
\[
\begin{aligned}
 \varphi & = \frac{\lambda_{-}^{b+1} \lambda_{+}^b  - \lambda_{-}^b \lambda_{+}^{b+1}}{\lambda_{-}^{b+1} \lambda_+^b - \lambda_{-}^b \lambda_{+}^{b+1} - \lambda_{-}^{b+1} + \lambda_{-}^b - \lambda_+^{b+1} + \lambda_{+}^b}, \\
 \beta & = \frac{\lambda_{+}^b - \lambda_{+}^{b+1}}{\lambda_{-}^{b+1} \lambda_+^b - \lambda_{-}^b \lambda_{+}^{b+1} - \lambda_{-}^{b+1} + \lambda_{-}^b - \lambda_+^{b+1} + \lambda_{+}^b}, \\
\gamma & = \frac{\lambda_{-}^b - \lambda_{-}^{b+1}}{\lambda_{-}^{b+1} \lambda_+^b - \lambda_{-}^b \lambda_{+}^{b+1} - \lambda_{-}^{b+1} + \lambda_{-}^b - \lambda_+^{b+1} + \lambda_{+}^b}.
\end{aligned}
\]
Simplifying the $k=1$ case, we obtain
\[
h_1 = 1 - \tfrac{4d (-1-\sqrt{1+8d})^{b} + 4d (-1+\sqrt{1+8d})^b }{(-2)^{b+1}\sqrt{1+8d} + 4d(-1-\sqrt{1+8d})^b + 4d(-1+\sqrt{1+8d})^b - (-1-\sqrt{1+8d})^{b+1} - (-1+\sqrt{1+8d})^{b+1}}. \qedhere
\]
\end{proof}

\section*{Acknowledgments}
We are very grateful to James Martin for valuable discussions. We would like to thank Ruth Williams for directing us to her paper \cite{KangWilliams} which very much facilitated our proof of the invariance principle. C.G.'s research is supported by EPSRC Fellowship EP/N004833/1. 

\bibliographystyle{abbrv}

\end{document}